\documentclass[msom,nonblindrev]{informs3}

\OneAndAHalfSpacedXI
\usepackage{subfigure}
\usepackage{hyperref}
\usepackage{fix-cm}
\usepackage{algorithm}
\usepackage{algorithmic}
\usepackage{booktabs}
\usepackage{tabularx}
\usepackage{array}
\usepackage{natbib}
 \bibpunct[, ]{(}{)}{,}{a}{}{,}%
 \def\bibfont{\small}%

\TheoremsNumberedThrough
\EquationsNumberedThrough

\makeatletter
\def\theARTICLETOP{}
\def\ps@arxivpreprint{%
  \def\@oddhead{}%
  \def\@evenhead{}%
  \def\@oddfoot{\hfil\thepage\hfil}%
  \def\@evenfoot{\hfil\thepage\hfil}%
  \def\sectionmark##1{}%
  \def\subsectionmark##1{}%
}
\makeatother
\begin{document}

\TITLE{Dynamic Cloud Service-Capacity Deployment with Costly Response Readiness}

\ARTICLEAUTHORS{
\AUTHOR{Ruihan Zhou}
\AFF{Guanghua School of Management, Peking University, Beijing 100871, China, \EMAIL{rhzhou@stu.pku.edu.cn}}

\AUTHOR{Xiaowei Zhang}
\AFF{Department of Industrial Engineering and Decision Analytics, The Hong Kong University of Science and Technology, Hong Kong SAR, China, \EMAIL{xiaoweiz@ust.hk}}

\AUTHOR{Yijie Peng}
\AFF{Department of AI for Management, School of Management and Engineering, School of Computer Science, Nanjing University, Nanjing 210008, China, \EMAIL{pengyijie@nju.edu.cn}}
} 

\ABSTRACT{ 
\textbf{Problem definition:}
AI product launches require service-capacity decisions before sufficient product-specific workload histories are available. Early service observations allow managers to update their beliefs about remaining workload. Unlike inventory, workload demand does not physically deplete service capacity, while preserving the ability to scale rapidly may require costly capacity-access and operational arrangements. We study how a firm should use evolving workload beliefs to decide how much capacity to deploy at the current review and whether to preserve future response readiness. \textbf{Methodology/results:}
We formulate a finite-horizon dynamic program with workload beliefs, serviceable capacity, and an absorbing response-channel state. A preservation--deployment decomposition evaluates the joint action by solving one deployment problem for each continuation side and comparing the optimized side values. The analysis identifies when the firm should preserve response readiness and how much capacity it should deploy conditional on that choice.  We translate this structure into the Cold-Start Belief Actionability Policy (CBAP), which estimates the side-specific values from workload scenarios drawn from available predictive beliefs. We provide bounds on the policy's decision and lifecycle performance losses. Across the tested instances, numerical experiments and a BurstGPT-based evaluation of the decision layer show that CBAP reduces
lifecycle cost relative to benchmark policies by avoiding premature capacity deployment and unnecessary readiness spending. \textbf{Managerial implications:}
The results separate forecast informativeness from operational
actionability and show that current deployment and future response readiness should be evaluated as distinct decision margins. Zero deployment can rationally represent Standby rather than Exit, while positive deployment need not imply preserving the response channel. Managers should therefore assess whether an updated workload belief supports current deployment, future response readiness, both, or neither.
}

\KEYWORDS{service capacity management; response readiness; costly operational flexibility; AI product launches; dynamic programming}

\maketitle
    
\section{Introduction}
\label{sec:introduction}

\subsection{Background and Costly Response Readiness}
\label{subsec:intro_positioning}

The rapid commercialization of generative AI has made capacity planning a central issue in new product launches. For digital services, especially AI services, launch demand is converted almost immediately into inference workload, token usage, latency pressure, and cloud-capacity requirements. Firms have little product-specific workload history before launch and must rely on product attributes, lifecycles of related products, platform signals, beta-test traffic, and release schedules \citep{ZhouEtAl2026CDLF}. After launch, early service observations help the firm update its belief about remaining workload, but these observations depend on the capacity already assigned to the product. Capacity decisions therefore affect both current service quality and the information available for learning about future demand. Rapid response is important because workload may increase sharply after launch, while additional GPU capacity, cloud quotas, engineering support, and internal approval may not be immediately available. A firm that does not maintain these arrangements may be unable to respond quickly when stronger demand signals emerge.

We study product-level cloud service-capacity deployment for a cold-start AI product. The firm monitors workload and service metrics continuously but reviews capacity at discrete managerial epochs. At review epoch $r$, the state is $\mathcal S_r=(\mathbb P_r,I_r,o_r)$, where $\mathbb P_r$ is the workload belief, $I_r$ is the serviceable capacity assigned to the product, and $o_r$ indicates whether the rapid-response channel remains open. The firm chooses $(q_r,o_{r+1})$: $q_r$ is the additional service capacity deployed or activated at the current review, and $o_{r+1}$ determines whether rapid-response deployment remains feasible afterward. The firm updates its workload belief from observed service outcomes. When available capacity limits service, it observes $X_r(q_r)=\min\{D_r,I_r+q_r\}$ rather than latent workload demand $D_r$. The capacity state represents configured service capacity rather than consumable inventory. Workload affects service, utilization, shortfall, and belief updating, but does not physically deplete cloud servers or GPUs. Section~\ref{sec2} specifies the capacity transition and cost structure.

The central feature of this setting is that future responsiveness is costly. Rapid response during an AI product launch may require capacity reservations, priority-access arrangements, vendor standby, engineering monitoring, and pre-approved scaling procedures. These arrangements may need to be maintained even when the firm does not deploy additional capacity in the current review interval. Cloud capacity-assurance services provide an operational example. Amazon EC2 On-Demand Capacity Reservations and Capacity Blocks for ML allow customers to reserve compute or GPU capacity for current or future workloads, and charges may apply even when the reserved capacity is unused.\footnote{See Amazon Web Services, \emph{On-Demand Capacity Reservations}, \emph{Capacity Reservation pricing and billing}, and \emph{Capacity Blocks for ML}, \url{https://docs.aws.amazon.com/AWSEC2/latest/UserGuide/ec2-capacity-reservations.html}; \url{https://docs.aws.amazon.com/AWSEC2/latest/UserGuide/capacity-reservations-pricing-billing.html}; and \url{https://docs.aws.amazon.com/AWSEC2/latest/UserGuide/ec2-capacity-blocks.html}.}

We represent this distinction through the response-readiness cost $k_r$. The deployment cost $c_rq_r$ applies to current service-capacity deployment, the idle-capacity cost $h$ applies to configured capacity that remains unused, and the shortage or delay penalty $b$ applies to workload that available capacity cannot serve. In contrast, $k_r$ is the cost of future actionability: the firm incurs it when it preserves the response channel after the current review. The payment maintains future deployment feasibility but does not purchase current service capacity. If the firm does not pay $k_r$, the response channel enters an absorbing closed state, and rapid-response deployment is no longer feasible within the modeled launch-support window. Thus, $k_r$ differs from both a conventional idle-capacity cost and the fixed ordering cost $K\cdot\mathbf{1}\{q_r>0\}$ used in standard inventory models.

The response-channel state changes the decision itself. When the channel is open, zero deployment can mean either Exit, $(q_r=0,o_{r+1}=0)$, or Standby, $(q_r=0,o_{r+1}=1)$. A positive deployment can likewise be a Terminal Response, $(q_r>0,o_{r+1}=0)$, or a Sustained Response, $(q_r>0,o_{r+1}=1)$. Zero deployment may therefore incur a readiness cost, while positive deployment need not preserve future response. Once closed, the channel cannot be reopened within the modeled launch-support window, although existing capacity remains available. Figure~\ref{fig:motivating_inventory_comparison} contrasts this setting with a standard inventory model. Inventory models use demand information to choose replenishment for a stock that demand depletes. The AI cloud-capacity problem instead uses evolving workload beliefs to choose both current deployment and future response readiness for capacity that demand does not deplete.
\begin{figure}[htbp]
\centering
\includegraphics[width=0.9\textwidth]{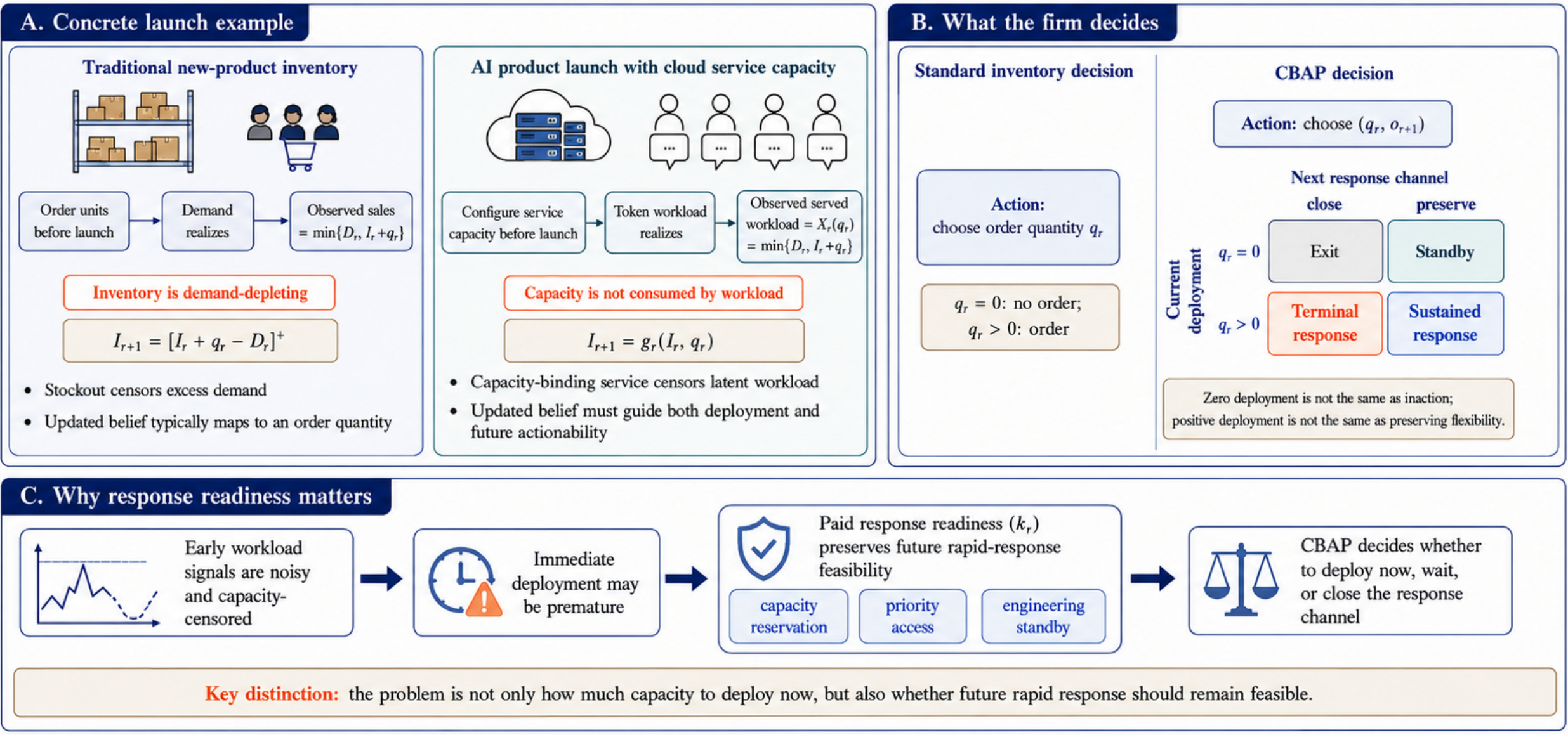}
\caption{Motivating contrast between inventory replenishment and response-readiness capacity deployment.}
\label{fig:motivating_inventory_comparison}
\end{figure}
\subsection{Related Literature and Research Gap}
\label{subsec:intro_literature}

\paragraph{Cloud service-capacity management under demand uncertainty.}

Cloud computing combines elastic resource provisioning with a layered value chain. Cloud providers plan and deploy infrastructure, while users choose among reserved, on-demand, and supplementary capacity options \citep{NISTMellGrance2011,ChenEtAl2023CloudValueChains}. At the user level, \citet{BulbulNoyanErol2021} formulate multistage provisioning decisions that balance reserved and on-demand resources. At the provider level, \citet{LiuEtAl2025CloudDeployment} study infrastructure preparation and server deployment under demand uncertainty and physical and temporal constraints. Related reservation models determine capacity commitments and supplementary contracts for intermittent demand surges \citep{ChenLeiMoinzadeh2024CloudReservation}. This literature establishes infrastructure planning, capacity sourcing, and reservation design as central cloud-operations problems.

AI inference introduces a distinct product-level problem. Workloads are heterogeneous and bursty, service performance depends on both workload composition and configured compute capacity, and rapid scaling may require assured capacity access rather than frictionless resource availability
\citep{ChenDeshpande2026AICloudValueChains}. Existing cloud-capacity studies have primarily examined infrastructure planning, capacity sourcing, and reservation decisions under demand uncertainty. They have paid less attention to product-level decisions that jointly determine current deployment and whether future rapid response remains feasible. We study these repeated decisions as workload beliefs evolve after launch. Workload does not physically deplete service capacity, and lower-level autoscaling, routing, and load balancing occur within the review interval and enter the model through service costs and capacity transitions.

\paragraph{Costly flexibility, reactive capacity, and response rights.}

Our response-readiness mechanism relates to the operations-management literature on costly flexibility and reactive capacity. Flexible resources and backup arrangements allow firms to defer some capacity commitments until uncertainty is resolved, but maintaining this ability generally requires an investment, flexibility premium, or reservation payment \citep{FineFreund1990,VanMieghem1998,JordanGraves1995}. Work on demand surges distinguishes safety stock from reactive capacity and shows the value of retaining the ability to respond after new information arrives \citep{HuangSongTong2016}. The supply-contract literature similarly separates payment for a future exercise right from the exercise decision itself, as in option contracts, backup agreements, and capacity-reservation arrangements \citep{EppenIyer1997,BarnesSchusterBassokAnupindi2002,SerelDadaMoskowitz2001,HazraMahadevan2009}. 

Our setting differs in three respects. Response readiness is neither a fixed flexible-capacity endowment nor a one-time option purchased at launch, and it does not commit the firm to current capacity. The firm reviews readiness repeatedly over the product lifecycle, may close the channel while retaining existing capacity, and chooses readiness separately from current deployment. It may therefore preserve future response without deploying immediately or deploy current capacity and then close the channel. Future actionability is consequently an endogenous operational state rather than an automatic consequence of holding a capacity option.

\paragraph{Inventory decisions with demand learning.}

Inventory models provide a useful comparison for decisions that use updated demand information. In lost-sales and newsvendor settings, firms choose replenishment for stock that realized demand depletes. When firms observe sales rather than latent demand, available inventory may censor the observation \citep{HuhRusmevichientong2009,HuhLeviRusmevichientongOrlin2011,BesbesMuharremoglu2013}. Data-driven inventory models also study how historical data, covariates, and demand observations can improve ordering decisions \citep{BanGallienMersereau2019}.

We use a related observation structure but study a different decision problem. Available service capacity may limit observed workload, so deployment can affect the information used to update the workload belief. We take this update as part of the model and do not propose a new method for censored learning. The main difference lies in the physical state and the decision dimensions. Inventory is depleted by demand, whereas service capacity evolves through provisioning, persistence, release, expiration, or scale-in. In addition to choosing current deployment, the firm decides whether future rapid deployment remains feasible. The response-channel decision therefore determines whether the firm can act on later workload information.

\paragraph{Research gap and positioning.}
Taken together, these research streams leave unresolved a product-level sequential decision: how should a firm determine both how much service capacity to deploy now and whether to continue paying to preserve rapid deployment later? Existing studies examine capacity deployment, costly flexibility, and demand learning largely as separate decisions. To our knowledge, they have not jointly modeled repeated deployment and endogenous response readiness as workload beliefs evolve after launch. This separation is consequential because preserving the ability to act later is distinct from deploying capacity now. It creates four operating modes and makes future response readiness an endogenous operational state.

\subsection{Methodology and Contributions}
\label{subsec:intro_method_evidence}
We formulate a finite-horizon dynamic program with workload beliefs, serviceable capacity, and an absorbing response-channel state. Our contributions are threefold.

First, we introduce costly response readiness as an endogenous and repeatedly renewed operational decision. Unlike formulations in which flexibility is fixed in advance or follows automatically from a capacity commitment, our model separates the payment that preserves future deployment feasibility from the decision that deploys current service capacity. The firm may therefore preserve the response channel without deploying capacity, or deploy capacity and then close the channel. This separation identifies response readiness as a distinct decision margin and generates the four operating modes described above.

Second, we establish the structural form of the joint readiness--deployment decision. The preservation--deployment
decomposition solves one deployment problem for each  continuation side and then compares the optimized side values. Under scalar-belief regularity conditions, this decomposition yields one preservation boundary, two side-specific deployment boundaries, and a conditional base-capacity rule. These results distinguish the decision of whether future action should remain feasible from the decision of how much capacity should be deployed conditional on that choice.

Third, we translate this structure into the Cold-Start Belief
Actionability Policy (CBAP). CBAP estimates the two side values from workload scenarios and applies the same preservation--deployment comparison online. It is agnostic to the source of the predictive belief and can use posterior distributions, empirical scenarios, or generative lifecycle distributions. We provide a local decision-error bound and an expected lifecycle error bound relative to the grid-restricted optimum, separating errors due to belief approximation, scenario sampling, continuation-value evaluation, and numerical implementation. Numerical experiments and a BurstGPT-based application evaluate this decision layer. Across the tested instances, CBAP reduces lifecycle cost relative to the benchmark policies by avoiding premature deployment and
unnecessary readiness spending, and all four operating modes arise on BurstGPT-derived workload paths.

The remainder of the paper is organized as follows. Section~\ref{sec2} presents the model, Section~\ref{sec3} characterizes the response-readiness mechanism, and Section~\ref{sec4} develops CBAP and its decision-error bounds. Section~\ref{sec5} reports the numerical experiment and BurstGPT application. Section~\ref{sec6} concludes.
\section{Problem Formulation}
\label{sec2}
We consider service-capacity deployment over a finite launch-support window for a cold-start AI product. The firm reviews capacity at epochs $r=1,\ldots,R$, and $D_r$ denotes workload aggregated over the interval between reviews. The finite horizon represents the review and evaluation window used for the launch analysis rather than the permanent economic termination of the product or its cloud-service arrangements. Review $R$ is therefore the final review included in the reported lifecycle accounting. At each post-launch review, the firm observes its workload belief, serviceable capacity, and response-channel state before choosing current deployment and future response feasibility. Before launch, it chooses initial deployment $q_0\ge0$ at unit cost $c_0$. If the response channel is open at review $r$, the firm may deploy additional capacity $q_r\in\mathcal Q_r=[0,\bar q_r]$. Capacity is measured in workload-equivalent units, such as token-serving capacity, GPU-hour capacity, or another normalized compute measure. The objective includes initial and post-launch deployment, response readiness, idle capacity, and shortage or latency costs.

The baseline model assumes that capacity deployed at a review becomes serviceable within the same review interval. A deterministic lead time can be incorporated by adding pipeline capacity to the state. This extension would shift the policy boundaries but would not change the distinction between current deployment and future response feasibility. Review-dependent unit costs $c_r$ capture the trade-off between the information gained by waiting and the potentially higher cost of later deployment, as in multiordering and demand-update sourcing models \citep{WangAtasuKurtulus2012,FedergruenLiuLu2026}.

\subsection{Demand, Belief, and System State}

Let $C$ denote the cold-start information available before launch. Let $D_{1:R}=(D_1,\ldots,D_R)$ be the latent potential demand path, where $D_r\ge0$ is potential demand during the interval after review $r$. It may represent inference requests, token usage, GPU-hour demand, or another measure of compute workload aggregated over the review interval.

Let $I_r$ denote serviceable capacity at the beginning of review epoch $r$, before the deployment decision. If the firm deploys $q_r$, the capacity available during the review interval is $I_r+q_r$. Served demand is
\begin{equation}
\label{eq:served_demand}
X_r(q_r)=\min\{D_r,I_r+q_r\}.
\end{equation}
When $X_r(q_r)<I_r+q_r$, served demand equals potential demand. When $X_r(q_r)=I_r+q_r$, the firm learns only that potential demand is at least the available capacity. Belief updating therefore uses served demand together with the known capacity limit. Unused service capacity and unmet demand are
\begin{equation}
\label{eq:service_outcomes}
U_r(q_r)=[I_r+q_r-D_r]^+,
\qquad
B_r(q_r)=[D_r-I_r-q_r]^+,
\end{equation}
where $[x]^+=\max\{x,0\}$. Here, $U_r(q_r)$ is deployed but unused capacity, and $B_r(q_r)$ is unmet demand, delayed requests, lost usage, or another shortage measure.

The function $g_r(\cdot,\cdot)$ captures persistence, expiration, release, automatic scale-in, or internal resource reallocation after the review interval. A parsimonious specification is
\begin{equation}
\label{eq:baseline_capacity_transition}
g_r(I_r,q_r)=(1-\delta_r)(I_r+q_r),
\qquad
\delta_r\in[0,1],
\end{equation}
where $\delta_r$ is an exogenous capacity-release parameter. The transition $g_r$ is an operational capacity-management rule, not a demand-depletion equation. The state $I_r$ is a product-level service-capacity budget measured in workload-equivalent units rather than a physical GPU count. It aggregates heterogeneous instances, regional pools, and lower-level autoscaling decisions. Non-depleting capacity need not be permanent: it may decay, expire, be released, or be reassigned through $g_r$. The essential distinction from inventory is that realized workload affects service outcomes and belief updating but is not mechanically subtracted from the next capacity state.

At the beginning of review epoch $r$, after observing past deployment decisions, served demand, and capacity states, let $\mathbb P_r$ denote the belief state over the remaining demand path:
\begin{equation}
\label{eq:demand_belief}
\mathbb P_r
=
\mathbb P
\left(
D_{r:R}
\mid
C,
q_{0:r-1},
X_{1:r-1},
I_{1:r}
\right).
\end{equation}
Sections~\ref{sec2} and \ref{sec3} use $\mathbb P_r$ as the belief state in the theoretical dynamic program. Section~\ref{sec4} introduces the learned or estimated belief used by CBAP. The belief may be represented by a posterior distribution, scenario tree, generative lifecycle model, or another predictive distribution over $D_{r:R}$. For example, conditional diffusion models can construct
cold-start lifecycle distributions by combining static product
descriptors, reference trajectories from related products, and newly
arriving observations \citep{ZhouEtAl2026CDLF}. The present
formulation requires only an available conditional predictive
distribution and is otherwise agnostic to the belief-generation
method.

Let $o_r\in\{0,1\}$ denote the response-channel state at the beginning of review epoch $r$. The response channel is available when $o_r=1$ and has been closed when $o_r=0$. Closure is irreversible. The full system state is
\begin{equation}
\label{eq:full_state}
\mathcal S_r
=
\left(
\mathbb P_r,
I_r,
o_r
\right).
\end{equation}
Unless stated otherwise, the response channel is initially available, so $o_1=1$. At review $r$, the firm first observes $\mathcal S_r$. If the response channel is open, it then chooses deployment $q_r$ and the next channel state $o_{r+1}$. If the channel is closed, no additional deployment is feasible and the channel remains closed. Demand $D_r$ is then realized, service outcomes are determined, and capacity evolves according to \eqref{eq:baseline_capacity_transition}. The firm uses the resulting observation to update $\mathbb P_{r+1}$ and enter state $\mathcal S_{r+1}$.

\subsection{Action Set and Cost Structure}

The admissible action depends on the current response-channel state. If $o_r=1$, the firm chooses $q_r\in\mathcal Q_r$ and $o_{r+1}\in\{0,1\}$. If $o_r=0$, the channel is closed and absorbing within the current launch lifecycle, so rapid-response deployment is no longer feasible under the pre-arranged review cadence. Formally,
\begin{equation}
\label{eq:action_set}
\mathcal A_r(o_r)
=
\begin{cases}
\{(q_r,o_{r+1}):q_r\in\mathcal Q_r,\ o_{r+1}\in\{0,1\}\}, & o_r=1,\\[3pt]
\{(0,0)\}, & o_r=0.
\end{cases}
\end{equation}
When $o_r=1$, the pair $(q_r,o_{r+1})$ induces four action modes:
\begin{itemize}
\item \textbf{\emph{Exit:}} $(q_r=0,o_{r+1}=0)$. The firm deploys no additional capacity, closes the response channel, and relies on the existing capacity trajectory.

\item \textbf{\emph{Standby:}} $(q_r=0,o_{r+1}=1)$. The firm deploys no additional capacity but maintains priority access, engineering monitoring, or budget authorization to preserve future response feasibility.

\item \textbf{\emph{Terminal Response:}} $(q_r>0,o_{r+1}=0)$. The firm deploys additional capacity in the current interval and then closes the channel, making this deployment the final rapid response within the modeled window.

\item \textbf{\emph{Sustained Response:}} $(q_r>0,o_{r+1}=1)$. The firm deploys additional capacity in the current interval and preserves the channel so that it can respond again after later workload observations.
\end{itemize}

The absorbing-channel assumption is a launch-window governance abstraction; it does not mean that the firm can never procure cloud resources again. After releasing launch-specific priority access, engineering standby, budget authorization, or pre-approved scaling procedures, the firm cannot restore rapid deployment within the same review cadence without a new procurement or escalation cycle. Because such a restart lies outside the modeled window, the model represents closure as absorbing. Closure leaves existing serviceable capacity unchanged but removes future rapid-response deployment within that window. Figure~\ref{fig:two_layer_response_flow} summarizes the action set and lifecycle flow.
\begin{figure}[htbp]
\centering
\includegraphics[width=0.9\textwidth]{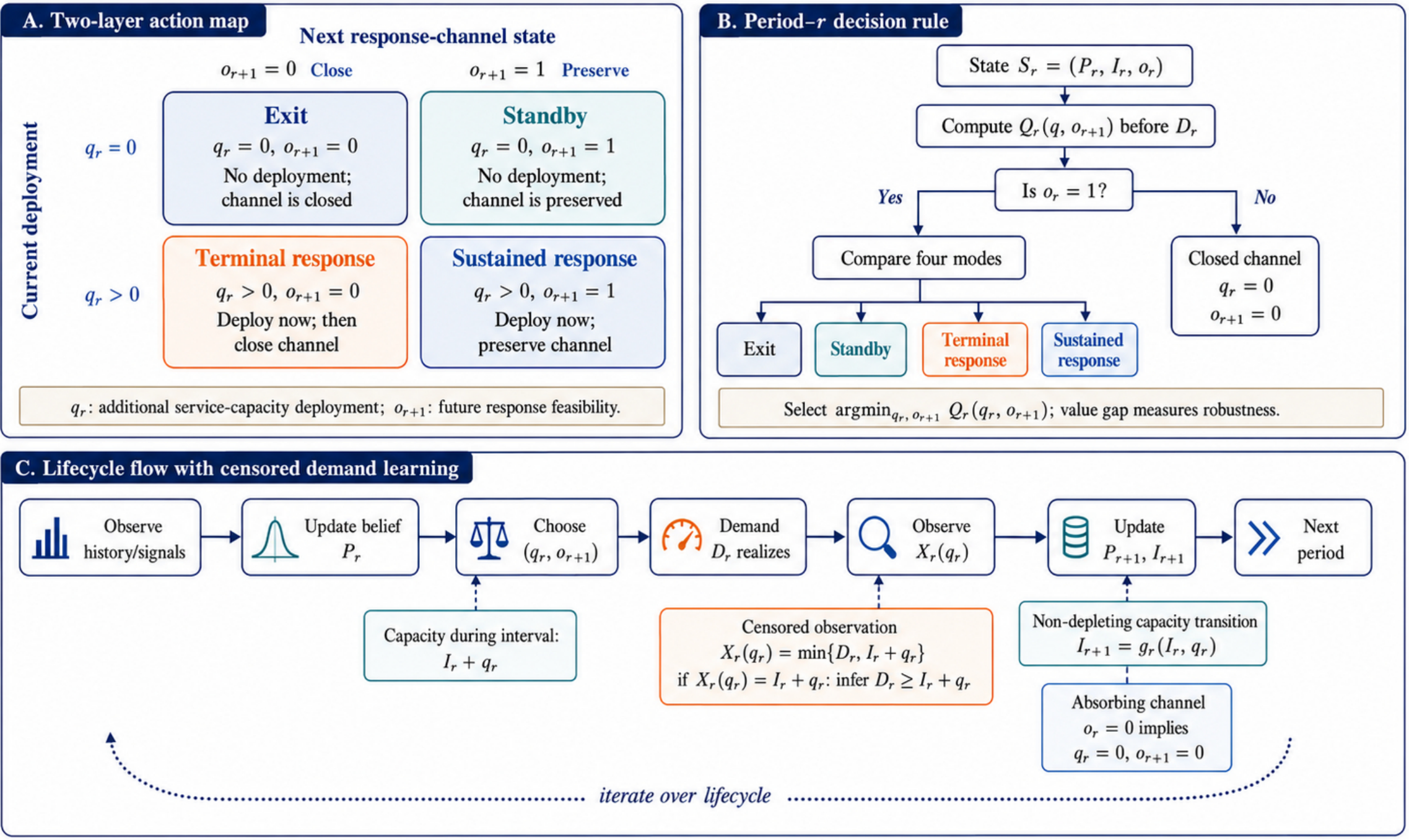}
\caption{Response action set and lifecycle decision flow.}
\label{fig:two_layer_response_flow}
\end{figure}

The firm incurs response-readiness cost $k_r$ when it preserves the channel, with $o_{r+1}=1$ as the accounting indicator. This rule applies at every scheduled review, including review $R$. The cost is tied to response feasibility rather than the current deployment amount: Exit and Terminal Response do not incur $k_r$, whereas Standby and Sustained Response do. At review $R$, the payment represents the cost of maintaining readiness through the final evaluation interval; it does not introduce a separately modeled decision at review $R+1$.

For a current deployment quantity $q_r$, define the operating cost
\begin{equation}
\label{eq:stage_operating_cost}
\ell_r(I_r,q_r,D_r)
=
h[I_r+q_r-D_r]^+
+
b[D_r-I_r-q_r]^+.
\end{equation}
Here, $h\ge0$ is the unit cost of deployed but unused capacity, and $b\ge0$ is the unit shortage, delay, or lost-demand penalty. Deployment and response-readiness costs are $c_rq_r$ and $k_ro_{r+1}$, respectively.

For notational simplicity, $I_1$ includes initial deployment $q_0$, while its acquisition cost remains explicit. A feasible policy $\pi$ maps each state $\mathcal S_r$ to an admissible pair $(q_r,o_{r+1})$. Its realized lifecycle cost is
\begin{equation}
\label{eq:objective_cost}
\mathrm{Cost}^{\pi}
=
c_0q_0^\pi
+
\sum_{r=1}^{R}
\left[
c_rq_r^\pi
+
k_ro_{r+1}^\pi
+
\ell_r
\left(
I_r^\pi,q_r^\pi,D_r
\right)
\right].
\end{equation}
The objective separates current deployment, readiness preservation, and realized service outcomes:
\begin{equation}
\label{eq:policy_objective}
\min_{\pi\in\Pi}
\mathbb E
\left[
\mathrm{Cost}^{\pi}
\mid C
\right],
\end{equation}
where $\Pi$ is the set of admissible policies and the expectation is taken over the latent demand path and the observations generated by the service process.

\subsection{Dynamic Program}

Let $V_r^1(\mathbb P_r,I_r)$ and $V_r^0(\mathbb P_r,I_r)$ denote the minimum expected costs from review $r$ onward when the response channel is open and closed, respectively. They form the two channel-state layers of the augmented-state dynamic program. For deployment quantity $q$, define $I_{r+1}(q)=g_r(I_r,q)$, and let $\mathbb P_{r+1}(q)$ be the updated belief after observing the service outcome generated during interval $r$. This notation makes explicit that deployment affects both the next capacity state and the observation used for belief updating.

We set $V_{R+1}^0=V_{R+1}^1=0$ and impose standard regularity conditions: the deployment sets $\mathcal Q_r$ are compact; stage costs are measurable and integrable; the capacity and belief transitions are measurable kernels; and the side-specific objectives are lower semicontinuous in the deployment quantity. The zero boundary values truncate costs and decisions outside the evaluation window; they do not assert that the product or its operational arrangements have zero economic value after review $R$. These conditions ensure finite Bellman expectations and measurable minimizers. The pre-launch deployment set is also a compact operational interval, $\mathcal Q_0=[0,\bar q_0]$.

When the response channel is closed, no additional deployment is feasible. The closed-channel value satisfies
\begin{equation}
\label{eq:closed_bellman}
V_r^0(\mathbb P_r,I_r)
=
\mathbb E_{\mathbb P_r}
\left[
\ell_r(I_r,0,D_r)
+
V_{r+1}^0
\left(
\mathbb P_{r+1}(0),
I_{r+1}(0)
\right)
\mid
\mathbb P_r,I_r
\right].
\end{equation}
After closure, the belief may continue to update and existing capacity may evolve through $g_r(I_r,0)$, but the absorbing channel state prevents further deployment.

When the channel is open, the firm may close or preserve it after the current interval. For a given $q\in\mathcal Q_r$, define the close-after-current quantity value
\begin{equation}
\label{eq:closed_after_current_value}
O_r^0(q;\mathbb P_r,I_r)
=
c_rq
+
\mathbb E_{\mathbb P_r}
\left[
\ell_r(I_r,q,D_r)
+
V_{r+1}^0
\left(
\mathbb P_{r+1}(q),
I_{r+1}(q)
\right)
\mid
\mathbb P_r,I_r
\right],
\end{equation}
and the preserve-after-current quantity value
\begin{equation}
\label{eq:preserved_after_current_value}
O_r^1(q;\mathbb P_r,I_r)
=
c_rq
+
\mathbb E_{\mathbb P_r}
\left[
\ell_r(I_r,q,D_r)
+
V_{r+1}^1
\left(
\mathbb P_{r+1}(q),
I_{r+1}(q)
\right)
\mid
\mathbb P_r,I_r
\right].
\end{equation}
In $O_r^a$, $a\in\{0,1\}$ denotes the next channel state $o_{r+1}=a$, not the current state.

The open-channel value satisfies
\begin{equation}
\label{eq:active_bellman}
V_r^1(\mathbb P_r,I_r)
=
\min
\left\{
\min_{q\in\mathcal Q_r}
O_r^0(q;\mathbb P_r,I_r),
\;
k_r+
\min_{q\in\mathcal Q_r}
O_r^1(q;\mathbb P_r,I_r)
\right\}.
\end{equation}
The first term in \eqref{eq:active_bellman} gives the best current deployment when the firm closes the channel after the current interval. The second gives the best deployment when the firm pays $k_r$ to preserve rapid-response feasibility. The next channel state is therefore a separate decision rather than a passive consequence of deployment.

The pre-launch deployment can be optimized jointly with the post-launch policy by setting
$I_1=q_0$ and 
$\mathbb P_1=\mathbb P(D_{1:R}\mid C)$, then solving
\begin{equation}
\label{eq:initial_deployment}
q_0^\star
\in
\arg\min_{q_0\in\mathcal Q_0}
\left\{
c_0q_0
+
\mathbb E
\left[
V_1^1(\mathbb P_1,I_1=q_0)
\mid C
\right]
\right\}.
\end{equation}
The recursion in \eqref{eq:active_bellman} separates current deployment from future response feasibility, which is the basis for the policy structure developed next.
 
\section{Response Readiness and Deployment Structure}
\label{sec3}

This section shows how readiness cost separates future response feasibility from current capacity deployment. Section~\ref{sec:preservation_deployment_comparison} decomposes the joint action, Section~\ref{sec:threshold_structure} derives scalar-belief boundaries, and Section~\ref{sec:base_capacity_representation} gives a conditional base-capacity representation. Appendix~\ref{app:sec3_proofs} provides the proofs, and Appendix~\ref{app:always_open_readiness_loss} bounds the cost of a full-window commitment policy.

\subsection{Preservation--Deployment Decomposition}
\label{sec:preservation_deployment_comparison}

We rewrite the open-channel Bellman equation using side-specific values. This decomposition is an evaluation device; it does not imply that preservation and deployment occur in a separate physical sequence. At an open-channel state, the firm solves one conditional deployment problem for each continuation side and compares the optimized values. For $q\in\mathcal Q_r$, let $O_r^0(q;\mathbb P_r,I_r)$ denote expected cost, excluding the current readiness payment, when the channel is closed after the current interval. Define $O_r^1(q;\mathbb P_r,I_r)$ analogously for preservation.

Define
\begin{equation}
\label{eq:A0_A1_definitions}
A_r^a(\mathbb P_r,I_r)
=
\min_{q\in\mathcal Q_r}
O_r^a(q;\mathbb P_r,I_r),
\qquad a\in\{0,1\},
\end{equation}
and let
\begin{equation}
\label{eq:q0_q1_definitions}
q_r^{a*}(\mathbb P_r,I_r)
\in
\arg\min_{q\in\mathcal Q_r}
O_r^a(q;\mathbb P_r,I_r),
\qquad a\in\{0,1\},
\end{equation}
where ties are resolved by selecting the smallest minimizer. The gross preservation value is
\begin{equation}
\label{eq:gross_option_value}
G_r(\mathbb P_r,I_r)
=
A_r^0(\mathbb P_r,I_r)
-
A_r^1(\mathbb P_r,I_r).
\end{equation}
The quantity $G_r(\mathbb P_r,I_r)$ is the gross cost reduction from preserving the response channel before paying $k_r$.

Using \eqref{eq:gross_option_value}, the open-channel Bellman equation in \eqref{eq:active_bellman} can be written as
\begin{equation}
\label{eq:compact_bellman}
V_r^1(\mathbb P_r,I_r)
=
\min
\left\{
A_r^0(\mathbb P_r,I_r),
\;
k_r+
A_r^1(\mathbb P_r,I_r)
\right\}.
\end{equation}
The comparison between $A_r^0(\mathbb P_r,I_r)$ and $k_r+A_r^1(\mathbb P_r,I_r)$ determines whether the firm closes or preserves the channel. With ties resolved in favor of closing, closure occurs when $G_r(\mathbb P_r,I_r)\le k_r$, and preservation occurs when $G_r(\mathbb P_r,I_r)>k_r$. The firm then implements the deployment quantity for the selected side:
\begin{equation}
\label{eq:preservation_deployment_rule}
\begin{cases}
o_{r+1}^*=0,\quad q_r^*=q_r^{0*}(\mathbb P_r,I_r), & \text{if } G_r(\mathbb P_r,I_r)\le k_r,\\[3pt]
o_{r+1}^*=1,\quad q_r^*=q_r^{1*}(\mathbb P_r,I_r), & \text{if } G_r(\mathbb P_r,I_r)> k_r.
\end{cases}
\end{equation}
This representation separates readiness from deployment. The cost $k_r$ does not directly penalize current deployment; it enters only the comparison between the two optimized continuation sides. Combining the selected side with whether its conditional deployment quantity is zero yields the four modes defined in Section~\ref{sec2}: Exit, Terminal Response, Standby, and Sustained Response.

The comparison between Exit and Standby further separates readiness from current operating cost. Both choices set $q_r=0$, so they have the same current deployment cost, operating term, and next capacity state. Their difference is
\begin{align}
&
\left[
k_r+O_r^1(0;\mathbb P_r,I_r)
\right]
-
O_r^0(0;\mathbb P_r,I_r)
\nonumber\\
&=
k_r
-
\mathbb E_{\mathbb P_r}
\left[
V_{r+1}^0
\left(
\mathbb P_{r+1}(0),
I_{r+1}(0)
\right)
-
V_{r+1}^1
\left(
\mathbb P_{r+1}(0),
I_{r+1}(0)
\right)
\mid
\mathbb P_r,I_r
\right].
\label{eq:standby_exit_cancellation}
\end{align}
Because $\ell_r(I_r,0,D_r)$ cancels, Standby is preferred to Exit only when the expected future value of feasible response actions exceeds the readiness cost. The next belief in \eqref{eq:standby_exit_cancellation} is generated by the served-demand observation under $q_r=0$ and the known capacity limit $I_r$.

\subsection{Scalar-Belief Boundaries for Readiness and Deployment}
\label{sec:threshold_structure}

Because the full belief $\mathbb P_r$ may be high dimensional, we use a scalar-belief benchmark to obtain explicit decision boundaries. Let $m_r\in\mathcal M_r$, where $\mathcal M_r\subseteq\mathbb R$ is an interval, summarize the belief about remaining workload. A larger $m_r$ represents a predictive distribution with stronger current and remaining workload. The index may be a posterior mean, burst probability, latent demand-scale parameter, or another statistic that orders predictive workload distributions.

For $a\in\{0,1\}$, define the scalar quantity value
\begin{equation}
\label{eq:scalar_quantity_value_by_channel}
O_r^a(q;m,I)
=
c_rq
+
\mathbb E
\left[
\ell_r(I,q,D_r)
+
V_{r+1}^a
\left(
m_{r+1}(q),
I_{r+1}(q)
\right)
\mid m
\right],
\end{equation}
where $a=0$ denotes closure after the current interval, $a=1$ denotes preservation, $I_{r+1}(q)=g_r(I,q)$ is the non-depleting capacity transition, and $m_{r+1}(q)$ is the next scalar belief generated by the served-demand observation and the known capacity limit $I+q$. The optimized side values and conditional deployment quantities are
\begin{equation}
\label{eq:scalar_q_definitions}
A_r^a(m,I)
=
\min_{q\in\mathcal Q_r}
O_r^a(q;m,I),
\qquad
q_r^{a*}(m,I)
\in
\arg\min_{q\in\mathcal Q_r}
O_r^a(q;m,I),
\qquad a\in\{0,1\}.
\end{equation}
When the minimizer is not unique, the smallest minimizer is selected. The scalar gross preservation value is
\[
G_r(m,I)=A_r^0(m,I)-A_r^1(m,I).
\]
This value measures the cost reduction from preservation before payment of the current readiness cost $k_r$.

The scalar-boundary result is not needed for either the dynamic program or the scenario-based implementation in Section~\ref{sec4}. It shows that, when a single index orders the belief, one preservation boundary and two conditional deployment boundaries represent the four modes. The result requires the convexity and single-crossing conditions stated below and does not assert that they hold for every belief-updating model.

\begin{assumption}[Quantity-value regularity]
\label{ass:quantity_value_regular}
For each $a\in\{0,1\}$, the feasible set $\mathcal Q_r=[0,\bar q_r]$ is compact with $\bar q_r>0$, and $O_r^a(q;m,I)$ is continuous in $(q,m)$, convex in $q$, and has decreasing differences in $(q,m)$. The right marginal value of deployment at zero,
\begin{equation}
\label{eq:psi_deployment_margin}
\psi_r^a(m;I)
=
\left.
\frac{\partial O_r^a(q;m,I)}
{\partial q}
\right|_{q=0^+},
\qquad a\in\{0,1\},
\end{equation}
exists, is continuous in $m$, and is strictly decreasing in $m$.
\end{assumption}
Under this assumption, conditional deployment has a threshold structure: on side $a$, positive deployment becomes optimal when the zero-deployment margin $\psi_r^a(m;I)$ becomes negative.

\begin{assumption}[Preservation-value single crossing]
\label{ass:preservation_single_crossing}
For fixed $(r,I)$ with $r<R$, the scalar preservation value $G_r(m,I)$ is continuous and nondecreasing in $m$.
\end{assumption}
This assumption gives channel preservation a threshold structure because stronger workload beliefs weakly increase the gross value of future response feasibility. Under Assumption~\ref{ass:quantity_value_regular}, the smallest conditional optimizer is nondecreasing in $m$ on each side. Stronger workload beliefs therefore make deployment weakly more attractive, although the implemented quantity still depends on the selected continuation side.

\begin{assumption}[Interior crossings]
\label{ass:interior_crossings}
For fixed $(r,I)$ with $r<R$, there exist $\underline m_r^P<\overline m_r^P$ in $\mathcal M_r$ such that $G_r(\underline m_r^P,I)<k_r<G_r(\overline m_r^P,I)$, and $G_r(\cdot,I)$ is strictly increasing on $[\underline m_r^P,\overline m_r^P]$. For each $a\in\{0,1\}$, there exist $\underline m_r^a<\overline m_r^a$ in $\mathcal M_r$ such that $\psi_r^a(\underline m_r^a;I)>0>\psi_r^a(\overline m_r^a;I)$.
\end{assumption}
This assumption restricts the analysis to the nondegenerate case in which the preservation and deployment thresholds lie inside the scalar belief space.

\begin{proposition}[Scalar-belief boundaries]
\label{prop:scalar_boundaries}
For $r<R$, under Assumptions~\ref{ass:quantity_value_regular}--\ref{ass:interior_crossings}, the smallest conditional optimizer $q_r^{a*}(m,I)$ is nondecreasing in $m$ for each $a\in\{0,1\}$. There also exist unique thresholds $m_r^P(k_r;I)$, $m_r^{D,0}(I)$, and $m_r^{D,1}(I)$ satisfying 
$G_r(m_r^P(k_r;I),I)=k_r$, $\psi_r^0(m_r^{D,0}(I);I)=0$, and 
$\psi_r^1(m_r^{D,1}(I);I)=0$. With ties resolved in favor of closing at the preservation boundary and zero deployment at the deployment boundaries, the optimal open-channel action is
\begin{equation}
\label{eq:four_region_policy}
\begin{cases}
\text{Exit}, & m_r \le m_r^P(k_r;I) \text{ and } m_r \le m_r^{D,0}(I),\\[3pt]
\text{Terminal Response}, & m_r \le m_r^P(k_r;I) \text{ and } m_r > m_r^{D,0}(I),\\[3pt]
\text{Standby}, & m_r > m_r^P(k_r;I) \text{ and } m_r \le m_r^{D,1}(I),\\[3pt]
\text{Sustained Response}, & m_r > m_r^P(k_r;I) \text{ and } m_r > m_r^{D,1}(I).
\end{cases}
\end{equation}
Holding $k_{r+1},\ldots,k_R$ and all other model primitives fixed, the current readiness cost $k_r$ shifts only the preservation boundary. In particular, if $k_r'>k_r$ and both corresponding preservation thresholds exist, then
\begin{equation}
\label{eq:preservation_boundary_monotonicity}
m_r^P(k_r';I)>m_r^P(k_r;I),
\end{equation}
whereas $m_r^{D,0}(I)$ and $m_r^{D,1}(I)$ are unaffected by current $k_r$.
\end{proposition}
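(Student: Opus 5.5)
The plan has four steps: the monotone comparative statics on each side, the deployment thresholds, the preservation threshold, and then assembling the four regions and the $k_r$ comparative statics.

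\emph{Step 1: monotone optimizers.} For each $a\in\{0,1\}$, Assumption~\ref{ass:quantity_value_regular} says $O_r^a(q;m,I)$ has decreasing differences in $(q,m)$ on the lattice $\mathcal Q_r\times\mathcal M_r$. Equivalently, $-O_r^a$ is supermodular. Continuity in $q$ on the compact $\mathcal Q_r$ makes the argmin nonempty and compact, so it has a smallest element. Topkis' monotonicity theorem then says the argmin correspondence is ascending in the strong set order, and hence the smallest selection $q_r^{a*}(m,I)$ is nondecreasing in $m$. This gives the first claim.

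\emph{Step 2: deployment thresholds.} By convexity of $O_r^a$ in $q$, the point $q=0$ is a minimizer if and only if the right derivative $\psi_r^a(m;I)\ge0$. With the smallest-minimizer convention, $q_r^{a*}=0$ holds exactly when $\psi_r^a\ge0$. If $\psi_r^a<0$, then $O_r^a$ strictly decreases just to the right of $0$, so $q_r^{a*}>0$. Since $\psi_r^a(\cdot;I)$ is continuous and strictly decreasing, and by Assumption~\ref{ass:interior_crossings} takes a positive and a negative value, the intermediate value theorem gives a unique root $m_r^{D,a}(I)$. Then $q_r^{a*}=0$ if and only if $m\le m_r^{D,a}(I)$, which matches ties resolved toward zero deployment.

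\emph{Step 3: preservation threshold.} $G_r(\cdot,I)$ is continuous and nondecreasing (Assumption~\ref{ass:preservation_single_crossing}), and it is strictly increasing on $[\underline m_r^P,\overline m_r^P]$ with $k_r$ strictly between its endpoint values. The intermediate value theorem gives a root $m_r^P$ in the interior of that interval. Uniqueness follows because strict monotonicity excludes a second root inside the interval, while monotonicity gives $G_r<k_r$ to the left and $G_r>k_r$ to the right. Hence $G_r\le k_r$ if and only if $m\le m_r^P$.

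This step is the main obstacle. Uniqueness relies on $G_r$ being strictly monotone only on a window, and we must check that no flat piece at level $k_r$ can occur outside it. Nondecreasingness together with the strict endpoint inequalities rules this out. I would verify that carefully.

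\emph{Step 4: assembly and the effect of $k_r$.} The decomposition \eqref{eq:preservation_deployment_rule} (ties to closing) selects side $0$ when $m\le m_r^P$ and side $1$ otherwise, and then implements $q_r^{a*}$. Crossing this with Step 2 gives the four regions in \eqref{eq:four_region_policy}.

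For the comparative statics in $k_r$, note that $V_{r+1}^a$ depends only on $k_{r+1},\dots,k_R$ and the other primitives. Therefore $O_r^a$, $\psi_r^a$, $A_r^a$ and $G_r$ do not depend on $k_r$, so $m_r^{D,a}(I)$ does not either. Now take $k_r'>k_r$ and suppose both thresholds exist. If $m_r^P(k_r')\le m_r^P(k_r)$, monotonicity of $G_r$ would give $k_r'=G_r(m_r^P(k_r'))\le G_r(m_r^P(k_r))=k_r$, a contradiction. This yields the strict inequality \eqref{eq:preservation_boundary_monotonicity}.
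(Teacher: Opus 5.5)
Your proof is correct and follows the paper's route: a decreasing-differences monotonicity argument, for which you cite Topkis' theorem while the paper proves the smallest-minimizer case directly by contradiction; the intermediate value theorem with strict monotonicity for the three thresholds; the right-derivative-at-zero criterion under convexity for zero deployment; and the independence of $G_r$, $\psi_r^0$, $\psi_r^1$ from the current $k_r$. At two points your argument is slightly more careful than the paper's: you use global nondecreasingness of $G_r$ to exclude roots outside $[\underline m_r^P,\overline m_r^P]$, and your contradiction argument for $m_r^P(k_r';I)>m_r^P(k_r;I)$ needs only that $G_r$ is nondecreasing, whereas the paper appeals to strict monotonicity on a crossing interval that need not contain the threshold for $k_r'$.
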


\noindent
The proof is provided in Appendix~\ref{app:proof_scalar_boundaries}. Figure~\ref{fig:scalar_belief_action_partition} illustrates the four regions in Proposition~\ref{prop:scalar_boundaries}. A stronger workload belief makes deployment more attractive on either continuation side but does not alone determine whether the firm preserves future response. Preservation depends on the comparison between gross value $G_r(m,I)$ and readiness cost $k_r$. Zero deployment may therefore represent either Exit or paid Standby, while positive deployment may represent either Terminal Response or Sustained Response. The firm evaluates the same workload belief on both continuation sides before selecting a joint action.

\begin{figure}[htp]
\centering
\includegraphics[width=0.7\textwidth]{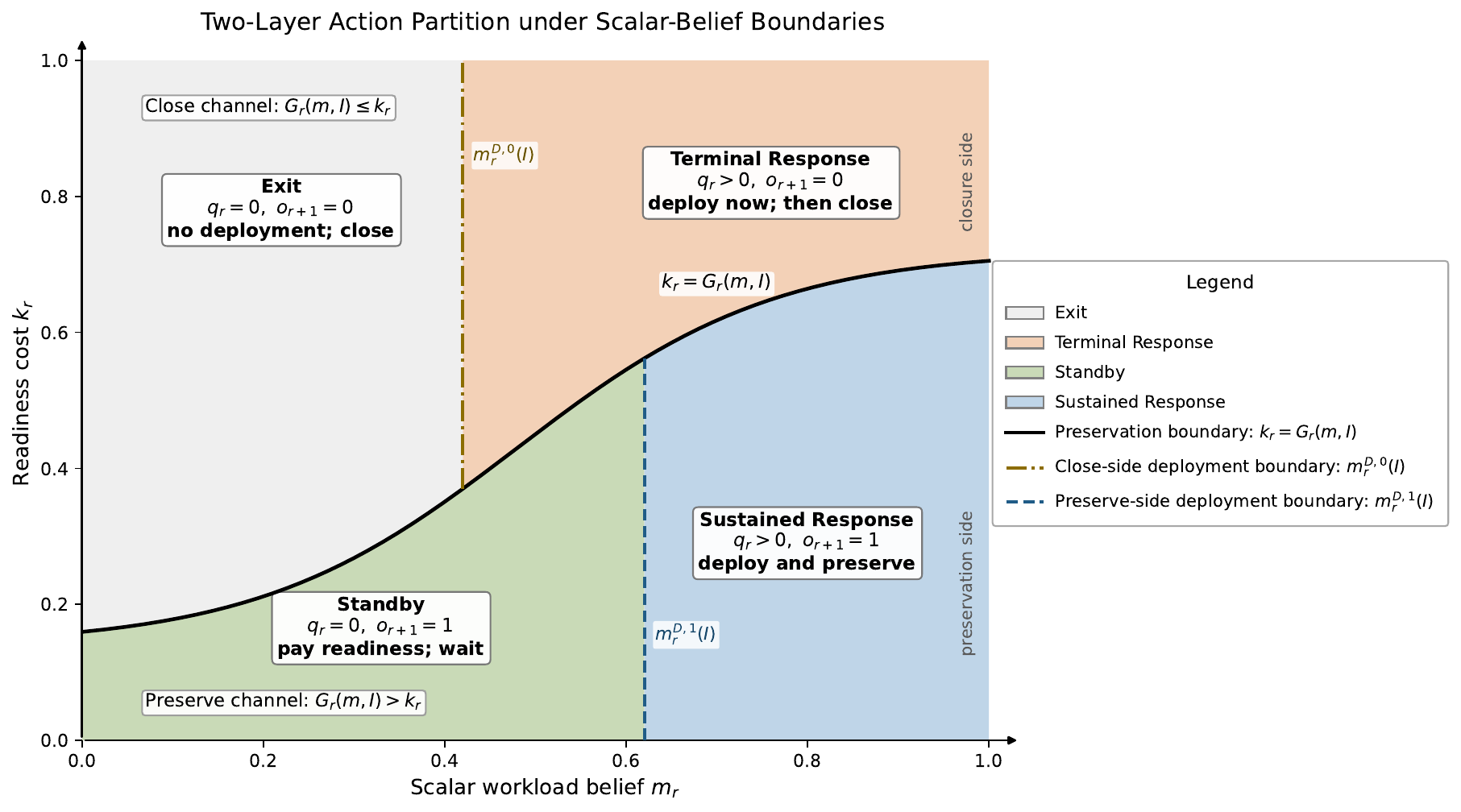}
\caption{Actionability regions under scalar beliefs.}
\label{fig:scalar_belief_action_partition}
\end{figure}

\paragraph{Conditional Base-Capacity Representation}
\label{sec:base_capacity_representation}

We next characterize the deployment quantity after the response-readiness comparison has selected a continuation side. This result connects the model to classical capacity and inventory policies. Let $y=I+q$ denote the service-capacity budget available during interval $r$ after deployment. Suppose the post-interval capacity state depends on deployment through this post-deployment capacity level, so that
\[
g_r(I,q)=\widetilde g_r(I+q)=\widetilde g_r(y).
\]
Let $X_r(y)=\min\{D_r,y\}$ be served workload, and let
\[
m_{r+1}(y)=\mathcal B_r(m,X_r(y),y)
\]
be the next scalar belief generated by served workload and the known capacity limit. For continuation side $a\in\{0,1\}$, define
\begin{equation}
\label{eq:Phi_base_capacity}
\Phi_r^a(y;m)
=
c_ry
+
\mathbb E_m
\left[
h(y-D_r)^+
+
b(D_r-y)^+
+
V_{r+1}^a
\left(
m_{r+1}(y),
\widetilde g_r(y)
\right)
\right].
\end{equation}
For $y=I+q$, this gives
\[
O_r^a(q;m,I)
=
-c_rI+\Phi_r^a(I+q;m).
\]

\begin{assumption}[Conditional base-capacity regularity]
\label{ass:base_capacity}
Retain the scalar-belief benchmark in Section~\ref{sec:threshold_structure}. For each review epoch $r$, scalar belief $m$, and continuation side $a\in\{0,1\}$, $\Phi_r^a(\cdot;m)$ is proper, lower semicontinuous, and convex on $[0,\infty)$, and it attains a minimum on this domain. The predictive distribution of $D_r$ under $m$ has a continuous distribution function $F_r(\cdot\mid m)$. The continuation term
\begin{equation}
\label{eq:Xi_base_capacity}
\Xi_r^a(y;m)
=
\mathbb E_m
\left[
V_{r+1}^a
\left(
m_{r+1}(y),
\widetilde g_r(y)
\right)
\right]
\end{equation}
is convex on $[0,\infty)$ and differentiable at the interior capacity levels used below. At kink points, its convex subdifferential is nonempty.
\end{assumption}

\begin{corollary}[Conditional base-capacity rule]
\label{cor:base_capacity}
Under Assumption~\ref{ass:base_capacity}, define the side-specific base-capacity target by
\begin{equation}
\label{eq:base_capacity_target}
\bar y_r^a(m)
\in
\arg\min_{y\ge 0}
\Phi_r^a(y;m),
\qquad a\in\{0,1\},
\end{equation}
with the smallest minimizer selected when the minimizer is not unique. Conditional on continuation side $a$, the optimal deployment quantity is
\begin{equation}
\label{eq:conditional_base_capacity_rule}
q_r^{a*}(m,I)
=
\min
\left\{
\bar q_r,
\left[\bar y_r^a(m)-I\right]^+
\right\},
\qquad a\in\{0,1\}.
\end{equation}
If there is no upper deployment bound, then
\[
q_r^{a*}(m,I)
=
\left[\bar y_r^a(m)-I\right]^+.
\]
If $\bar y_r^a(m)>0$ is an interior minimizer, $\Xi_r^a(\cdot;m)$ is differentiable at $\bar y_r^a(m)$, and the right-hand side below lies in $[0,1]$, then
\begin{equation}
\label{eq:dynamic_critical_fractile}
F_r(\bar y_r^a(m)\mid m)
=
\frac{
b-c_r-\partial_y\Xi_r^a(\bar y_r^a(m);m)
}{
b+h
},
\end{equation}
where $\partial_y\Xi_r^a$ is the dynamic shadow value of capacity under continuation side $a$. At a kink point, the optimality condition is
\begin{equation}
\label{eq:dynamic_critical_fractile_subgradient}
0
\in
c_r
+
hF_r(\bar y_r^a(m)\mid m)
-
b\left[1-F_r(\bar y_r^a(m)\mid m)\right]
+
\partial\Xi_r^a(\bar y_r^a(m);m).
\end{equation}
If the global target is zero, or if the implemented capacity level reaches a feasible boundary, the corresponding one-sided or normal-cone optimality condition replaces \eqref{eq:dynamic_critical_fractile}.
\end{corollary}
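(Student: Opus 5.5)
The plan is to reduce the conditional deployment problem to a one-dimensional convex minimization over the post-deployment capacity level $y=I+q$. By the identity $O_r^a(q;m,I)=-c_rI+\Phi_r^a(I+q;m)$, the term $-c_rI$ does not depend on $q$. Hence minimizing $O_r^a(\cdot;m,I)$ over $\mathcal Q_r=[0,\bar q_r]$ is the same as minimizing $\Phi_r^a(\cdot;m)$ over the interval $[I,I+\bar q_r]\subseteq[0,\infty)$, and the smallest minimizers correspond under $q=y-I$. Assumption~\ref{ass:base_capacity} guarantees that $\Phi_r^a(\cdot;m)$ is proper, lower semicontinuous and convex, and that it attains its minimum. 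So the smallest global minimizer $\bar y_r^a(m)$ is well defined, because the argmin set is a nonempty closed interval whose smallest element exists.

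\textbf{Projection step.} The key fact is that for a convex function on $\mathbb R_+$ with smallest minimizer $\bar y$, the smallest minimizer over a compact interval $[L,U]$ equals $\min\{U,\max\{L,\bar y\}\}$. I would prove this from monotonicity of convex functions on either side of the argmin set.
\begin{itemize}
\item On $[0,\bar y]$ the function is nonincreasing, and it is strictly decreasing to the left of $\bar y$. Otherwise some point to the left would also be a minimizer, contradicting the choice of the smallest one.
\item On $[\bar y,\infty)$ the function is nondecreasing.
\end{itemize}
Consider the three cases.
\begin{itemize}
\item If $\bar y\le L$, the function is nondecreasing on $[L,U]$, so $L$ is a minimizer. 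It is the smallest because nothing lies to its left in the interval.
\item If $\bar y\in[L,U]$, then $\bar y$ itself is the smallest constrained minimizer.
\item If $\bar y>U$, the function is strictly decreasing on $[L,U]$, so $U$ is the unique minimizer.
\end{itemize}
Substituting $L=I$ and $U=I+\bar q_r$, and then subtracting $I$, gives $q_r^{a*}=\min\{\bar q_r,[\bar y_r^a(m)-I]^+\}$. Letting $\bar q_r=\infty$ gives the unbounded version, $q_r^{a*}=[\bar y_r^a(m)-I]^+$.

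\textbf{First-order characterization.} The remaining step is to characterize $\bar y_r^a(m)$, and I expect this to be the only delicate point. Write $\Phi_r^a(y;m)=c_ry+L_r(y;m)+\Xi_r^a(y;m)$, where $L_r(y;m)=\mathbb E_m[h(y-D_r)^++b(D_r-y)^+]$ is the newsvendor loss.
\begin{itemize}
\item Because $F_r(\cdot\mid m)$ is continuous, $L_r$ is differentiable in $y$ with derivative $hF_r(y\mid m)-b[1-F_r(y\mid m)]$. This follows by differentiating under the expectation. The integrand is Lipschitz in $y$ with constant $\max\{h,b\}$, and its kink at $D_r=y$ has probability zero, so dominated convergence applies.
\item At an interior minimizer where $\Xi_r^a$ is differentiable, Fermat's rule gives $c_r+(h+b)F_r-b+\partial_y\Xi_r^a=0$. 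Rearranging yields \eqref{eq:dynamic_critical_fractile}. The requirement that the right-hand side lie in $[0,1]$ is simply consistency with $F_r$ being a probability.
\item At a kink of $\Xi_r^a$, I would use the subdifferential sum rule for convex functions. It applies because $c_ry+L_r$ is finite, convex and differentiable, so $\partial\Phi_r^a=c_r+L_r'+\partial\Xi_r^a$. The optimality condition $0\in\partial\Phi_r^a(\bar y)$ is then exactly \eqref{eq:dynamic_critical_fractile_subgradient}.
\item At the boundary $y=0$, or when the constraint $y\in[I,I+\bar q_r]$ binds, the standard condition $0\in\partial\Phi+N_{[L,U]}(y)$ replaces equality. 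This yields the stated one-sided conditions.
\end{itemize}
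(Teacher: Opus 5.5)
Your proposal is correct and follows the same route as the paper's proof. Both arguments use the change of variables $y=I+q$, giving $O_r^a(q;m,I)=-c_rI+\Phi_r^a(y;m)$, then project the smallest global minimizer $\bar y_r^a(m)$ onto $[I,I+\bar q_r]$, and then apply the first-order, subgradient, or normal-cone condition built from $\partial_y\mathbb E_m[(y-D_r)^+]=F_r(y\mid m)$. The only difference is that you prove steps the paper asserts without argument: the strict-decrease argument behind the projection formula, the dominated-convergence step for differentiating the newsvendor loss, and the subdifferential sum rule.
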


The proof is provided in Appendix~\ref{app:base_capacity_representation}. The two side-specific targets determine $A_r^0$ and $A_r^1$, and the preservation--deployment rule in \eqref{eq:preservation_deployment_rule} selects which target to implement. Thus, the readiness comparison chooses the continuation side, while the base-capacity rule determines deployment conditional on that side.
\section{CBAP and Decision Guarantees}
\label{sec4}

This section develops a scenario-based implementation of the Bellman comparisons in Section~\ref{sec3}. Whereas the dynamic program uses the theoretical conditional belief $\mathbb P_r$, the decision maker uses a learned belief $\widehat{\mathbb P}_r$ constructed from cold-start information and post-launch observations. CBAP uses $\widehat{\mathbb P}_r$ to generate scenarios, estimates the close-after-current and preserve-after-current values, and applies the preservation--deployment decomposition.

\subsection{Scenario-Based Implementation}

At review $r$, CBAP takes as input the open-channel state $(\widehat{\mathbb P}_r,I_r,o_r=1)$, the cost parameters, and the feasible deployment set $\mathcal Q_r=[0,\bar q_r]$. The upper bound $\bar q_r$ represents an operational limit, such as a budget, cloud quota, GPU availability, or engineering capacity. If no hard limit binds, historical data or an independent pilot sample may determine the bound. Both $\bar q_r$ and the finite grid $\mathcal Q_r^G$, which contains zero, are fixed conditional on the current history before the online SAA scenarios are drawn.

\paragraph{Side-specific SAA value evaluation.} CBAP replaces each conditional expectation in $O_r^0$ and $O_r^1$ with a sample average approximation (SAA) over demand paths from $\widehat{\mathbb P}_r$. At review $r$, draw $D_{r:R}^{(n)}\sim\widehat{\mathbb P}_r$, $n=1,\ldots,N_r$. For any path-dependent cost functional $H(D_{r:R})$, approximate $\mathbb E_{\widehat{\mathbb P}_r}[H(D_{r:R})]$ by
\[
\frac{1}{N_r}
\sum_{n=1}^{N_r}
H(D_{r:R}^{(n)}).
\]
All candidate deployment quantities and continuation sides use the same scenario sample at a given review, allowing comparison under common demand realizations.

For candidate quantity $q\in\mathcal Q_r^G$ and continuation side $a\in\{0,1\}$, CBAP evaluates one-period operating cost and the side-specific continuation value on the common sample. The latent path determines realized cost, but the belief updater receives only information available to the operating policy. For each simulated candidate, it uses served demand $X_r(q)=\min\{D_r,I_r+q\}$ and the known capacity limit $I_r+q$. If $X_r(q)=I_r+q$, the update uses the weak event $D_r\ge I_r+q$ and does not distinguish equality from strict excess. The updater never receives latent excess demand. The next capacity state follows $g_r(I_r,q)$ rather than subtracting realized workload from capacity.

This observation kernel is an input to the belief-learning module rather than a contribution to censored-demand estimation. When potential workload is directly recorded, as in the BurstGPT application, the updater uses observed $D_r$ instead of capacity-limited $X_r$; the preservation--deployment decision rule is unchanged.

For $a\in\{0,1\}$, let $C_{r+1}^{a,(n)}(q)$ be the continuation estimate from the next belief-capacity state when $o_{r+1}=a$. We consider three evaluators. An exact learned-model Bellman evaluator solves the remaining grid-restricted dynamic program under $\widehat{\mathbb P}_r$ by exact integration or backward induction on a finite scenario tree. Each node action may depend only on the history observed at that node. A fitted evaluator uses
\begin{equation}
\label{eq:saa_continuation_term_by_side}
C_{r+1}^{a,(n)}(q)
=
\widetilde V_{r+1}^{a}
\left(
\widehat{\mathbb P}_{r+1}^{(n)}(q),
I_{r+1}(q);
\theta_{r+1}^{a}
\right),
\qquad a\in\{0,1\}.
\end{equation}
A rollout evaluator simulates a fixed non-anticipative continuation policy from $(\widehat{\mathbb P}_{r+1}^{(n)}(q),I_{r+1}(q),o_{r+1}=a)$. Future actions may depend on observations available up to the relevant epoch but not on the unrevealed part of the sampled path. A rollout estimates the value of its continuation policy rather than the optimal learned-model Bellman value. A nested SAA implementation introduces additional downstream sampling and optimization errors and is therefore also approximate. At the terminal boundary, $C_{R+1}^{a,(n)}(q)=0$ for $a\in\{0,1\}$.

The SAA counterparts of the close-after-current and preserve-after-current quantity values are
\begin{equation}
\label{eq:side_quantity_saa}
\widehat O_r^a(q;\widehat{\mathbb P}_r,I_r)
=
c_rq
+
\frac{1}{N_r}
\sum_{n=1}^{N_r}
\left[
hU_r^{(n)}(q)
+
bB_r^{(n)}(q)
+
C_{r+1}^{a,(n)}(q)
\right],
\qquad a\in\{0,1\}.
\end{equation}
The value $\widehat O_r^1$ excludes current readiness cost $k_r$, which is added separately to the preserve-after-current alternative. If the channel is closed at review $r$, the only feasible action is $(q_r,o_{r+1})=(0,0)$, with value estimate
\begin{equation}
\label{eq:closed_value_estimate_sec4}
\widehat V_r^0(\widehat{\mathbb P}_r,I_r)
=
\widehat O_r^0(0;\widehat{\mathbb P}_r,I_r).
\end{equation}

Algorithm~\ref{alg:value_estimation} evaluates $\widehat O_r^a(q;\widehat{\mathbb P}_r,I_r)$ for candidate pair $(q,a)$, where $a$ is the next channel state. Each pair uses the common sample drawn by Algorithm~\ref{alg:cbap}; no scenarios are resampled. Unlike a demand-depleting inventory recursion, the algorithm obtains the next capacity state from $g_r(I_r,q)$ and reuses it across scenarios for the same $q$. Service outcomes and belief updates remain scenario dependent.

\begin{algorithm}[htp]
\small
\caption{Side-Specific Value Evaluation}
\label{alg:value_estimation}
\begin{algorithmic}
\STATE \textbf{Input:} Open-channel state $(\widehat{\mathbb P}_r,I_r,o_r=1)$; candidate quantity $q\in\mathcal Q_r^G$; next channel state $a\in\{0,1\}$; sampled demand paths $\{D_{r:R}^{(n)}\}_{n=1}^{N_r}$; cost parameters from epoch $r$ onward; an exact learned-model Bellman evaluator, a fixed nonanticipative rollout evaluator, or a fitted continuation value $\widetilde V_{r+1}^{a}(\cdot;\theta_{r+1}^{a})$.
\STATE Compute next capacity state $I_{r+1}(q)=g_r(I_r,q)$.
\FOR{$n=1,\ldots,N_r$}
\STATE Compute $U_r^{(n)}(q)=\bigl[I_r+q-D_r^{(n)}\bigr]^+$ and $B_r^{(n)}(q)=\bigl[D_r^{(n)}-I_r-q\bigr]^+$.
\STATE Compute observable served demand $X_r^{(n)}(q)=\min\{D_r^{(n)},I_r+q\}$.
\STATE Update the belief using $X_r^{(n)}(q)$ and the known limit $I_r+q$; when $X_r^{(n)}(q)=I_r+q$, use the event $D_r^{(n)}\ge I_r+q$.
\IF{fitted continuation value is used}
\STATE Compute continuation term $C_{r+1}^{a,(n)}(q)$ via \eqref{eq:saa_continuation_term_by_side}.
\ELSIF{exact learned-model Bellman evaluation is used}
\STATE Compute $C_{r+1}^{a,(n)}(q)$ from the remaining grid-restricted Bellman recursion.
\ELSE
\STATE Compute $C_{r+1}^{a,(n)}(q)$ under the fixed nonanticipative rollout policy.
\ENDIF
\ENDFOR
\STATE Compute SAA counterpart $\widehat O_r^a(q;\widehat{\mathbb P}_r,I_r)$ via \eqref{eq:side_quantity_saa}.
\STATE \textbf{Return:} $\widehat O_r^a(q;\widehat{\mathbb P}_r,I_r)$.
\end{algorithmic}
\end{algorithm}

The corresponding continuation-evaluation errors are incorporated into $e_r^{\mathrm{cont}}$ in Section~\ref{sec:finite_sample_guarantees}.

\paragraph{Continuation-value approximation.}
When exact learned-model Bellman evaluation is computationally expensive, supervised function approximators $\widetilde V_r^a(\widehat{\mathbb P}_r,I_r;\theta_r^a)$, $a\in\{0,1\}$, can estimate the side-specific continuation values. Depending on the belief representation, the approximators may be neural networks, regression models, or other prediction architectures. They are trained backward over review epochs using Bellman targets from simulated or historical belief-capacity states; Appendix~\ref{app:fitted_continuation_training} provides the procedure. A mean-square training objective alone does not imply a uniform approximation bound. Any bound used below applies to a stated reachable set and requires separate justification or validation. The empirical application in Section~\ref{sec:empirical_application} uses scenario-based continuation evaluation; the fitted module is included as a scalable alternative for larger state spaces.

\paragraph{Online decision rule.}
At an open-channel state, CBAP evaluates the side-specific quantity values
$\widehat O_r^a(q;\widehat{\mathbb P}_r,I_r)$ for $a\in\{0,1\}$ and
$q\in\mathcal Q_r^G$ using the common SAA scenario sample. It then solves
\begin{equation}
\label{eq:qhat_by_side}
\widehat q_r^a
\in
\arg\min_{q\in\mathcal Q_r^G}
\widehat O_r^a(q;\widehat{\mathbb P}_r,I_r),
\qquad a\in\{0,1\},
\end{equation}
and defines
\begin{equation}
\label{eq:Ahat_by_side}
\widehat A_r^a(\widehat{\mathbb P}_r,I_r)
=
\widehat O_r^a(\widehat q_r^a;\widehat{\mathbb P}_r,I_r),
\qquad a\in\{0,1\}.
\end{equation}
The estimated gross preservation value is
\begin{equation}
\label{eq:Ghat_sec4}
\widehat G_r(\widehat{\mathbb P}_r,I_r)
=
\widehat A_r^0(\widehat{\mathbb P}_r,I_r)
-
\widehat A_r^1(\widehat{\mathbb P}_r,I_r),
\end{equation}
which is the sample counterpart of \eqref{eq:gross_option_value}. With optional closure buffer $\rho_r\ge0$, CBAP closes the channel only when $\widehat G_r(\widehat{\mathbb P}_r,I_r)\le k_r-\rho_r$. The buffer guards against premature closure when estimated gross preservation value is close to readiness cost. The baseline implementation sets $\rho_r=0$.

\begin{algorithm}[htp]
\small
\caption{Cold-Start Belief Actionability Policy}
\label{alg:cbap}
\begin{algorithmic}
\STATE \textbf{Input:} Open-channel state $(\widehat{\mathbb P}_r,I_r,o_r=1)$; costs $c_r,k_r,h,b$; deployment grid $\mathcal Q_r^G$; scenario size $N_r$; closure buffer $\rho_r\ge 0$; exact learned-model Bellman evaluator, fixed nonanticipative rollout evaluator, or fitted continuation values $\widetilde V_{r+1}^{a}(\cdot;\theta_{r+1}^{a})$, $a\in\{0,1\}$.
\STATE Draw sampled demand paths from $\widehat{\mathbb P}_r$.
\FOR{each $a\in\{0,1\}$ and each $q\in\mathcal Q_r^G$}
    \STATE Evaluate $\widehat O_r^a(q;\widehat{\mathbb P}_r,I_r)$ using Algorithm~\ref{alg:value_estimation} with the same sampled paths.
\ENDFOR
\STATE Compute $\widehat q_r^a$, $\widehat A_r^a$, $a\in\{0,1\}$, and $\widehat G_r(\widehat{\mathbb P}_r,I_r)$ using \eqref{eq:qhat_by_side}--\eqref{eq:Ghat_sec4}.
\IF{$\widehat G_r(\widehat{\mathbb P}_r,I_r)\le k_r-\rho_r$}
    \STATE Set $o_{r+1}=0$, $q_r=\widehat q_r^0$, and choose mode
    \[
    \begin{cases}
    \textbf{\emph{Exit}}, & \widehat q_r^0=0,\\
    \textbf{\emph{Terminal Response}}, & \widehat q_r^0>0.
    \end{cases}
    \]
\ELSE
    \STATE Set $o_{r+1}=1$, $q_r=\widehat q_r^1$, and choose mode
    \[
    \begin{cases}
    \textbf{\emph{Standby}}, & \widehat q_r^1=0,\\
    \textbf{\emph{Sustained Response}}, & \widehat q_r^1>0.
    \end{cases}
    \]
\ENDIF
\STATE Observe the realized service outcome, update $I_{r+1}=g_r(I_r,q_r)$ and $\widehat{\mathbb P}_{r+1}$, and proceed to the next review epoch.
\end{algorithmic}
\end{algorithm}

Appendix~\ref{app:fitted_continuation_training} provides the optional fitted-continuation training procedure.
\subsection{Local Decision Guarantees and Lifecycle Error Bound}
\label{sec:finite_sample_guarantees}

We distinguish a local decision guarantee from an expected lifecycle guarantee because CBAP draws new scenarios only at histories reached by the online policy. A high-probability error bound along one realized trajectory does not imply the same bound for the expected value of the full randomized policy.

Let $\mathcal H_r$ denote the information available immediately before the SAA scenarios are drawn at epoch $r$, including the learned belief, the deployment grid, and any continuation evaluator fixed before online sampling. The deployment grid is therefore $\mathcal H_r$-measurable. At an open-channel state, define $\mathcal K_r=\mathcal Q_r^G\times\{0,1\}$. For $u=(q,a)\in\mathcal K_r$, let $Q_r^{\mathbb P}(u)$ be the true grid-restricted Bellman candidate value, including $k_ra$, and let $Q_r^{\mathrm{learn}}(u)$ be the population candidate value under the learned predictive model with exact grid-restricted Bellman continuation. Let $\widehat Q_r(u)$ be the estimate used by CBAP, where $\widehat Q_r((q,a))=\widehat O_r^a(q;\widehat{\mathbb P}_r,I_r)+k_ra$. Suppressing dependence on the current state, let
\[
\bar Q_r(u)
=
\mathbb E
\left[
\widehat Q_r(u)
\mid
\mathcal H_r
\right].
\]
The continuation-evaluation error below is zero for an exact learned-model Bellman evaluator. For fitted values, it includes the difference between $\widetilde V_{r+1}^{a}$ and the exact learned-model continuation value over the next states used in candidate evaluation. For a fixed rollout policy, it also includes policy-evaluation bias relative to the optimal learned-model value. For nested SAA, it further includes downstream sampling and optimization errors.

\begin{assumption}[Candidate-value accuracy and adaptive sampling]
\label{ass:belief_stability}
For each epoch $r$, the following conditions hold.
\begin{enumerate}
\item The belief-accuracy event
\[
\mathcal E_r^{\mathrm{bel}}
=
\left\{
\sup_{u\in\mathcal K_r}
\left|
Q_r^{\mathbb P}(u)-Q_r^{\mathrm{learn}}(u)
\right|
\le
e_r^{\mathrm{bel}}
\right\}
\]
has probability at least $1-\beta_r$ under the history distribution generated by CBAP.

\item The continuation-evaluation event
\[
\mathcal E_r^{\mathrm{cont}}
=
\left\{
\sup_{u\in\mathcal K_r}
\left|
\bar Q_r(u)-Q_r^{\mathrm{learn}}(u)
\right|
\le
e_r^{\mathrm{cont}}
\right\}
\]
has probability at least $1-\varphi_r$ under the joint distribution of the continuation evaluator and the histories generated by CBAP. If the evaluator is fixed before online evaluation and the bound is deterministic, set $\varphi_r=0$.

\item Conditional on $\mathcal H_r$, the sampling event
\[
\mathcal E_r^{\mathrm{saa}}
=
\left\{
\sup_{u\in\mathcal K_r}
\left|
\widehat Q_r(u)-\bar Q_r(u)
\right|
\le
e_r^{\mathrm{saa}}
\right\}
\]
satisfies $\Pr((\mathcal E_r^{\mathrm{saa}})^c\mid\mathcal H_r)\le\gamma_r$.

\item The true one-step Bellman gap is bounded on the reachable set: for every $u\in\mathcal K_r$,
\[
0
\le
Q_r^{\mathbb P}(u)
-
\min_{v\in\mathcal K_r}Q_r^{\mathbb P}(v)
\le
\overline\Delta_r.
\]
\end{enumerate}
\end{assumption}

Assumption~\ref{ass:belief_stability} uses direct candidate-value error rather than a metric on the path-belief space. A Wasserstein bound is a sufficient condition if the path metric is specified and the Bellman candidates are Lipschitz under that metric. The result concerns error propagation, not the learning rate of a particular belief estimator.

The sampling radius $e_r^{\mathrm{saa}}$ must match the scenario distribution and estimator. If, for example, the one-scenario evaluation for every candidate lies in an interval of length $L_r^{\mathrm{eval}}$, conditional Hoeffding concentration and a union bound give
\begin{equation}
\label{eq:saa_radius_bounded}
e_r^{\mathrm{saa}}
=
L_r^{\mathrm{eval}}
\sqrt{
\frac{\log(2|\mathcal K_r|/\gamma_r)}{2N_r}
}.
\end{equation}
This bounded-range formula does not apply directly to unbounded demand. In that case, $e_r^{\mathrm{saa}}$ must follow from a suitable tail condition, a robust estimator, or an explicitly truncated operational model. Averaging all paths in a finite empirical scenario law creates no SAA error relative to that law.

Let $\eta_{\mathrm{opt},r}$ denote the error from minimization over the fixed finite grid. It is zero under exact enumeration. Because the comparator below uses the same grid, $\eta_{\mathrm{opt},r}$ excludes grid-approximation error.

\begin{theorem}[Local and lifecycle error bounds for scenario-based CBAP]
\label{thm:cbap_regret}
Suppose Assumption~\ref{ass:belief_stability} holds. Define
\begin{equation}
\label{eq:local_cbap_error}
E_r
=
2
\left(
e_r^{\mathrm{bel}}
+
e_r^{\mathrm{saa}}
+
e_r^{\mathrm{cont}}
\right)
+
\eta_{\mathrm{opt},r}
+
\rho_r.
\end{equation}
At any fixed reachable open-channel history, on $\mathcal E_r^{\mathrm{bel}}\cap\mathcal E_r^{\mathrm{saa}}\cap\mathcal E_r^{\mathrm{cont}}$, the action $\widehat u_r$ selected by CBAP satisfies
\begin{equation}
\label{eq:cbap_local_bound}
Q_r^{\mathbb P}(\widehat u_r)
-
\min_{u\in\mathcal K_r}Q_r^{\mathbb P}(u)
\le
E_r.
\end{equation}

Let $V_{\mathbb P}^{\mathrm{CBAP}}(\mathcal S_1)$ denote lifecycle cost averaged over both demand and the internal scenario draws of the online algorithm. Let $\pi_G^*$ be the true-model optimal policy on the same grids. Then
\begin{equation}
\label{eq:cbap_regret_bound}
V_{\mathbb P}^{\mathrm{CBAP}}(\mathcal S_1)
-
V_{\mathbb P}^{\pi_G^*}(\mathcal S_1)
\le
\sum_{r=1}^{R}
\left[
E_r
+
(\beta_r+\gamma_r+\varphi_r)\overline\Delta_r
\right].
\end{equation}
\end{theorem}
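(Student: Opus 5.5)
The proof has two parts: a local bound obtained by chaining the three error events, and a lifecycle bound obtained from a performance-difference (telescoping) argument along the CBAP trajectory.

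\emph{Local bound.} Fix a reachable open-channel history and work on $\mathcal E_r^{\mathrm{bel}}\cap\mathcal E_r^{\mathrm{saa}}\cap\mathcal E_r^{\mathrm{cont}}$. By the triangle inequality,
\[
\sup_{u\in\mathcal K_r}\bigl|\widehat Q_r(u)-Q_r^{\mathbb P}(u)\bigr|\le \epsilon_r:=e_r^{\mathrm{bel}}+e_r^{\mathrm{saa}}+e_r^{\mathrm{cont}},
\]
obtained by passing through $\bar Q_r$ and $Q_r^{\mathrm{learn}}$.

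Let $u^\star$ minimize $Q_r^{\mathbb P}$ over $\mathcal K_r$. CBAP's selection $\widehat u_r$ is an $\eta_{\mathrm{opt},r}$-approximate side-wise minimizer, followed by a buffered comparison. I would first show that
\[
\widehat Q_r(\widehat u_r)\le \min_{u}\widehat Q_r(u)+\eta_{\mathrm{opt},r}+\rho_r.
\]
This takes a two-case check. If CBAP preserves, then $\widehat A_r^0-\widehat A_r^1>k_r-\rho_r$, so the chosen side's value is within $\rho_r$ of the other side's, and the claim holds. If CBAP closes, then $\widehat G_r\le k_r-\rho_r\le k_r$, so the closed side is the better one. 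In both cases the within-side optimization error contributes at most $\eta_{\mathrm{opt},r}$.

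Then
\[
Q_r^{\mathbb P}(\widehat u_r)\le \widehat Q_r(\widehat u_r)+\epsilon_r\le \widehat Q_r(u^\star)+\eta_{\mathrm{opt},r}+\rho_r+\epsilon_r\le Q_r^{\mathbb P}(u^\star)+2\epsilon_r+\eta_{\mathrm{opt},r}+\rho_r,
\]
which is \eqref{eq:cbap_local_bound}.

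\emph{Lifecycle bound.} I would use the standard performance-difference identity for finite-horizon MDPs. Take $Q_r^{\mathbb P}$ as the true grid-restricted optimal Bellman candidate values, with $V_r^{*}=\min_u Q_r^{\mathbb P}(u)$. The cost of CBAP minus $V_1^{*}$ then telescopes into
\[
\mathbb E\Bigl[\sum_r \bigl(Q_r^{\mathbb P}(\widehat u_r)-V_r^{*}(\mathcal S_r)\bigr)\Bigr],
\]
with the expectation taken over CBAP's trajectory, including its internal scenario draws. This is valid because CBAP is non-anticipative: the next state's law given $(\mathcal S_r,\widehat u_r)$ is the true transition kernel, so the continuation term in $Q_r^{\mathbb P}(\widehat u_r)$ matches the expectation of $V_{r+1}^{*}$ at the next state. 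Closed-channel epochs contribute zero, since their action set is a singleton. Each summand is nonnegative and bounded by $\overline\Delta_r$ (Assumption~\ref{ass:belief_stability}(4)), and it is at most $E_r$ on the good event. Hence each expected summand is at most
\[
E_r+\overline\Delta_r\Pr\bigl((\mathcal E^{\mathrm{bel}}_r)^c\cup(\mathcal E^{\mathrm{cont}}_r)^c\cup(\mathcal E^{\mathrm{saa}}_r)^c\bigr).
\]
A union bound gives $\beta_r+\varphi_r+\gamma_r$; for $\mathcal E^{\mathrm{saa}}_r$, the bound $\gamma_r$ is obtained by towering over $\mathcal H_r$. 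Summing over $r$ yields \eqref{eq:cbap_regret_bound}, since $V_1^{*}=V^{\pi_G^*}_{\mathbb P}$.

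\emph{Main obstacle.} The delicate step is the telescoping under adaptive sampling. The probabilities in Assumption~\ref{ass:belief_stability} must be taken under the history distribution induced by CBAP itself; the assumption is phrased exactly this way. One must also check that conditioning on the internal randomness of earlier epochs does not break the Markov transition used in the identity. I would handle this by enlarging the state to include past scenario draws, which are independent of future demand given the history. The true kernel is then unchanged, and $V_r^{*}$ as a function of the physical state remains a valid comparator.
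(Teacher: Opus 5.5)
Your proposal is correct and follows essentially the same route as the paper's proof. Both arguments use the triangle inequality through $\bar Q_r$ and $Q_r^{\mathrm{learn}}$, the two-case check showing that the buffered selection is an $(\eta_{\mathrm{opt},r}+\rho_r)$-approximate minimizer of $\widehat Q_r$, and the finite-horizon performance-difference identity with a good/bad-event split, union bound, and tower property for the SAA event. Your justification of that identity under CBAP's internal scenario randomness, by enlarging the state to include past draws, fills in a step the paper simply asserts.
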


\noindent
The proof is provided in Appendix~\ref{app:proof_cbap_regret}.

Theorem~\ref{thm:cbap_regret} does not condition sampling probability on the belief-accuracy event. It controls $\mathcal E_r^{\mathrm{bel}}\cap\mathcal E_r^{\mathrm{saa}}\cap\mathcal E_r^{\mathrm{cont}}$ and retains every failure penalty in the expected lifecycle bound. The local bound applies at a fixed history, while the lifecycle bound averages true Bellman gaps over the histories reached by the randomized online policy.

For fixed $R$, convergence to the grid-restricted optimum follows if the belief, sampling, continuation, optimization, and buffering errors vanish and $\beta_r,\gamma_r,\varphi_r\to0$. Under \eqref{eq:saa_radius_bounded}, convergence also requires $\log(|\mathcal K_r|/\gamma_r)/N_r\to0$. Comparison with the continuous-action optimum requires a separate grid-approximation bound that vanishes as the grids become dense. Appendix~\ref{app:mode_consistency_note} gives the corresponding local mode-stability result.
\section{Numerical Experiments and BurstGPT Application}
\label{sec5}

This section examines the operational and economic performance of CBAP. The controlled experiment isolates the value of post-launch response and selective readiness preservation. The empirical application uses BurstGPT workload traces and a daily capacity-readiness review. This daily review represents managerial capacity planning rather than lower-level autoscaling: hourly token workloads are aggregated into daily demand, and capacity-readiness decisions are revised once per day. In both studies, capacity follows $I_{r+1}=(1-\delta)(I_r+q_r)$. Workload determines service, idle capacity, shortage, and belief updating, but does not deplete capacity. We set $\delta=0.25$, so deployed capacity remains available across review intervals but gradually decays over the lifecycle.

\subsection{Benchmark Policies and Evaluation Metrics}
\label{sec:evaluation_protocol}

\paragraph{Benchmark policies.}
We compare CBAP with six benchmarks. Table~\ref{tab:benchmark_definitions} summarizes each comparison, and the policy rules are defined below.

Static chooses only pre-launch deployment $q_0$ and allows no post-launch response. It represents launch-time capacity sizing without response flexibility. Forecast-then-deploy updates the demand belief after launch and converts the forecast into a target capacity level but does not value whether the response channel should remain open. It follows the forecast-then-optimize logic used in data-driven inventory and capacity decisions \citep{BanGallienMersereau2019}. To avoid front-loading capacity for distant peaks under the decaying transition, the policy uses a short lookahead window: two review periods in the controlled experiment and two operating days in the BurstGPT application.

Always-open is an externally imposed full-window commitment policy: it preserves the response channel at every scheduled review, pays the associated readiness cost, and optimizes deployment subject to that constraint. The comparison with CBAP therefore measures the cost of full-window readiness commitment relative to endogenous closure. Myopic response also keeps the channel open but chooses $q$ to minimize current-period expected deployment and operating cost,
$c_rq+\mathbb E_{\widehat{\mathbb P}_r}[\ell_r(I_r,q,D_r)]$,
without continuation value. The two benchmarks represent flexibility preserved by design and response based only on current-period cost, respectively
\citep{EppenIyer1997,SerelDadaMoskowitz2001,HazraMahadevan2009,HuhRusmevichientong2009,BesbesMuharremoglu2013}.

Two-stage recourse chooses $q_0$ before launch and permits one post-launch deployment at a fixed demand-update point, consistent with demand-update sourcing models \citep{FedergruenLiuLu2026}. It preserves the channel and pays readiness cost before that point; at the recourse epoch, it may deploy once and then closes the channel. The controlled experiment places recourse at the middle review, whereas the BurstGPT application selects the recourse day using the timing diagnostic in Appendix~\ref{app:two_stage_timing}. Oracle observes the realized future demand path and optimizes ex post. It is not implementable and serves only as a diagnostic lower bound.

\begin{table}[htbp]
\centering
\small
\caption{Benchmark policies and comparison purposes}
\label{tab:benchmark_definitions}
\renewcommand{\arraystretch}{1.15}
\begin{tabularx}{\textwidth}{
>{\raggedright\arraybackslash}p{0.25\textwidth}
>{\raggedright\arraybackslash}X}
\toprule
\textbf{Policy} & \textbf{Purpose} \\
\midrule
Static & Tests the value of post-launch response. \\[3pt]
Forecast-then-deploy & Tests whether belief updating is sufficient without valuing response readiness. \\[3pt]
Always-open & Tests the cost of a full-window readiness commitment relative to endogenous closure. \\[3pt]
Myopic response & Tests the value of dynamic continuation and future learning. \\[3pt]
Two-stage recourse & Tests the value of repeated, state-dependent response timing. \\[3pt]
Oracle & Provides a non-implementable diagnostic lower bound. \\
\bottomrule
\end{tabularx}
\end{table}
\paragraph{Evaluation protocol and metrics.}
We use a rolling lifecycle evaluation. For each launch instance, an implementable policy chooses pre-launch capacity, observes only information available by each review, updates its demand belief, and makes a capacity-readiness decision. Future demand is hidden from implementable policies and used only for out-of-sample evaluation. Realized lifecycle cost follows the objective in Section~\ref{sec2}, with readiness cost incurred only when the channel is preserved.

For launch instance $i$, let $\mathrm{Cost}_i^\pi$ denote the realized lifecycle cost under policy $\pi$. Average lifecycle cost is the primary economic objective:
\begin{equation}
\overline{\mathrm{Cost}}^\pi=
\frac{1}{M}
\sum_{i=1}^{M}
\mathrm{Cost}_i^\pi,
\label{eq:avg_lifecycle_cost}
\end{equation}
where $M$ is the number of out-of-sample launch instances.

Fill rate measures the fraction of realized demand that is served:
\begin{equation}
\mathrm{FillRate}^{\pi}
=
\frac{
\sum_{i=1}^{M}
\sum_{s=1}^{T_i}
X_{is}^{\pi}
}{
\sum_{i=1}^{M}
\sum_{s=1}^{T_i}
D_{is}
},
\label{eq:fill_rate_experiment}
\end{equation}
where $X_{is}^{\pi}$ is served demand for instance $i$ in lifecycle period $s$. Fill rate is a service diagnostic rather than the objective. Deploying more capacity or preserving readiness more often may increase fill rate but can also raise deployment, idle-capacity, or readiness costs.

Readiness cost is the average expenditure on preserving the response channel. Response actions count positive post-launch deployments per lifecycle, excluding standby decisions. Together, these measures distinguish the cost of preserving readiness from its use through deployment.

Normalized regret measures how much of the Static-to-Oracle performance gap remains:
\begin{equation}
\mathrm{NormalizedRegret}^{\pi}
=
\frac{
\overline{\mathrm{Cost}}^{\pi}
-
\overline{\mathrm{Cost}}^{\mathrm{Oracle}}
}{
\overline{\mathrm{Cost}}^{\mathrm{Static}}
-
\overline{\mathrm{Cost}}^{\mathrm{Oracle}}
}.
\label{eq:normalized_regret_experiment}
\end{equation}
Oracle has normalized regret zero and Static has normalized regret one. This metric puts results from the controlled simulation and empirical application on the same relative scale.

Mode shares report the fraction of review states assigned to Exit, Standby, Terminal Response, and Sustained Response. We also use three aggregate measures: closing is Exit plus Terminal Response, preserving is Standby plus Sustained Response, and positive response is Terminal Response plus Sustained Response.
\paragraph{Reproducibility and randomization.}
All policies are evaluated on the same controlled demand realizations and the same fixed set of empirical episodes. Within each CBAP review, all candidate quantities and continuation sides are evaluated using common scenario draws, as described in Section~4.1. The BurstGPT test sample is selected once using the reported seed 202604 and is then held fixed across policies.
\subsection{Controlled Numerical Experiment}
\label{sec:numerical_experiment}

\paragraph{Simulation design.}
The controlled experiment isolates the main economic mechanisms. We simulate 12-period product lifecycles with post-launch reviews in periods $2,\ldots,11$. Each product belongs to an unobserved Low, Regular, or Burst demand regime, representing cold-start uncertainty about weak adoption, regular adoption, or a post-launch burst. The prior probabilities are $0.35$, $0.45$, and $0.20$, respectively.

Demand for product $i$ in period $s$ follows $D_{is}\mid\theta_i,\xi_i\sim\mathrm{Poisson}(\xi_i\lambda_{\theta_i,s})$, where $\xi_i$ is a product-level random effect with $\log\xi_i\sim N(0,\sigma_\xi^2)$. The three regimes have different lifecycle profiles and increasing peak intensity, so early observations are informative but do not immediately reveal the regime. Demand is observed through serviceable capacity. The baseline uses a Bayesian filter over the three regimes and accounts for the known capacity limit when updating the belief. We use a simple belief generator to keep the experiment focused on the decision layer rather than forecasting performance.

The capacity transition is $I_{r+1}=(1-\delta)(I_r+q_r)$ with $\delta=0.25$, so capacity persists across periods but may expire, be released, or be scaled in after each review interval. Demand affects service, idle capacity, shortage, and belief updating but does not deplete capacity. Pre-launch deployment cost is $c_0=2.0$. Post-launch deployment cost starts at $3.0$ and rises by $0.10$ per review index, representing a later-response urgency premium \citep{WangAtasuKurtulus2012,FedergruenLiuLu2026}. We set idle-capacity cost $h=0.5$, shortage or latency penalty $b=10$, and readiness cost $k_r=40$. Deployment uses a capacity grid with step size 10. The experiment includes 300 training lifecycles, 120 out-of-sample lifecycles, and 8 SAA scenarios per review decision. Forecast-then-deploy uses a two-period lookahead, and Two-stage recourse uses the middle review.

\paragraph{Main numerical results.}
Table~\ref{tab:baseline_results} reports the baseline results. CBAP has the lowest lifecycle cost among implementable policies and leaves 4.7\% of the Static-to-Oracle cost gap. Its cost is 65.18\% lower than Static, 16.20\% lower than Forecast-then-deploy, 9.65\% lower than Myopic response, and 7.90\% lower than Always-open. These comparisons provide mechanism-based evidence rather than an exact causal decomposition. The large gap relative to Static is consistent with the benefit of post-launch response. The comparison with Forecast-then-deploy suggests that updated forecasts alone are insufficient when preserving response readiness is costly. The comparisons with Myopic response and Always-open indicate additional gains associated with continuation-aware decisions and selective closure. Forecast-then-deploy attains a higher fill rate than CBAP, 0.9916 versus 0.9721, but also has a higher total cost. The cost objective therefore does not maximize service without regard to deployment, idle-capacity, and readiness costs.

\begin{table}[htbp]
\centering
\small
\caption{Baseline numerical policy comparison}
\label{tab:baseline_results}
\renewcommand{\arraystretch}{1.15}
\setlength{\tabcolsep}{3.5pt}
\begin{tabularx}{\textwidth}{
>{\raggedright\arraybackslash}p{0.22\textwidth}
>{\centering\arraybackslash}p{0.19\textwidth}
>{\centering\arraybackslash}p{0.11\textwidth}
>{\centering\arraybackslash}p{0.13\textwidth}
>{\centering\arraybackslash}p{0.15\textwidth}
>{\centering\arraybackslash}X}
\toprule
\textbf{Policy}
& \textbf{Avg. cost}
& \textbf{Fill rate}
& \textbf{Ready. cost}
& \textbf{Resp. actions}
& \textbf{Norm. regret} \\
\midrule
\multicolumn{6}{l}{\textit{Non-implementable diagnostic bound}} \\
Oracle & 2074.76 & 0.9833 & 206.67 & 4.11 & 0.000 \\
\midrule
\multicolumn{6}{l}{\textit{Implementable policies}} \\
\textbf{CBAP} & \textbf{2285.89} & 0.9721 & 123.33 & 3.63 & \textbf{0.047} \\
Always-open & 2482.02 {\scriptsize (+7.90\%)} & 0.9807 & 400.00 & 4.83 & 0.091 \\
Forecast-then-deploy & 2727.70 {\scriptsize (+16.20\%)} & 0.9916 & 400.00 & 4.45 & 0.145 \\
Myopic response & 2530.06 {\scriptsize (+9.65\%)} & 0.9705 & 400.00 & 5.33 & 0.101 \\
Two-stage recourse & 6421.57 {\scriptsize (+64.40\%)} & 0.7775 & 160.00 & 1.00 & 0.968 \\
Static & 6565.56 {\scriptsize (+65.18\%)} & 0.8886 & 0.00 & 0.00 & 1.000 \\
\bottomrule
\end{tabularx}
\vspace{3pt}
\begin{minipage}{0.97\textwidth}
\footnotesize
\emph{Notes.} Percentages in parentheses report the cost reduction achieved by CBAP relative to the corresponding benchmark. Oracle observes future demand and is not implementable. ``Resp. actions'' is the average number of post-launch review epochs with positive deployment. The capacity transition is $I_{r+1}=(1-\delta)(I_r+q_r)$ with $\delta=0.25$.
\end{minipage}
\end{table}

Figure~\ref{fig:main_numerical_mechanism} explains these cost differences. CBAP uses all four modes, with Exit and Sustained Response occurring most often in this calibration. The cost decomposition shows that CBAP avoids the high shortage cost of Static while spending much less on readiness than policies that keep the channel open. As $k_r$ rises, CBAP uses preservation modes less often, which is consistent with the readiness comparison in Section~\ref{sec3}.

\begin{figure}[htbp]
\centering
\subfigure[Four-action mode shares across review states]{
\includegraphics[width=3in]{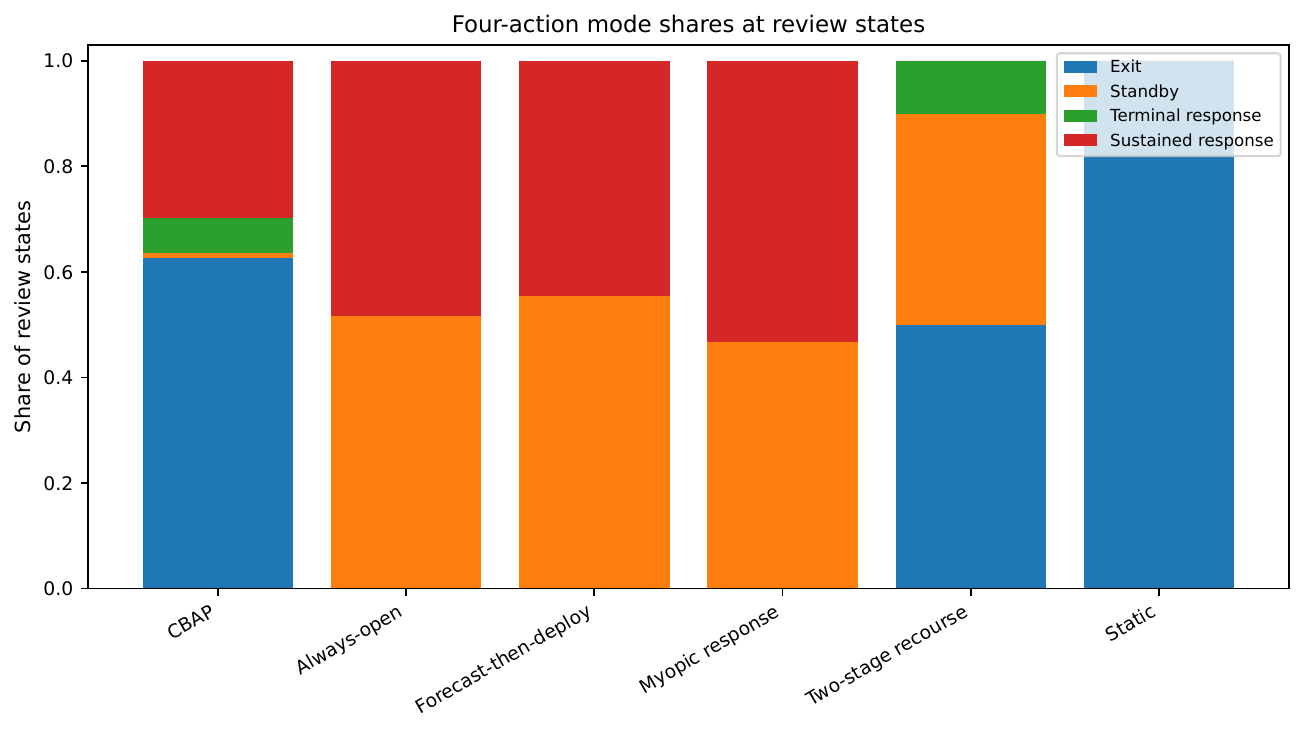}
\label{fig:mode_shares_four_action}
}
\subfigure[Cost decomposition by policy]{
\includegraphics[width=3in]{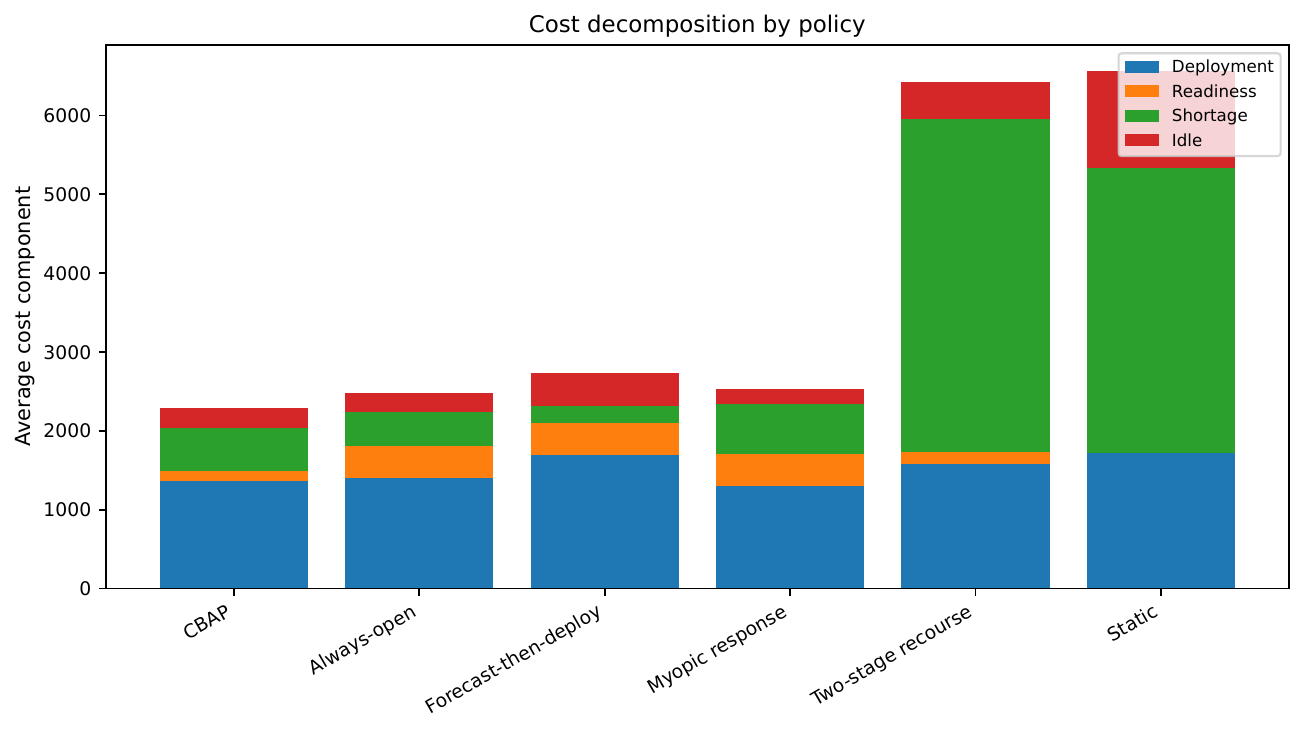}
\label{fig:cost_decomposition}
}
\subfigure[Sensitivity to readiness cost $k_r$]{
\includegraphics[width=3in]{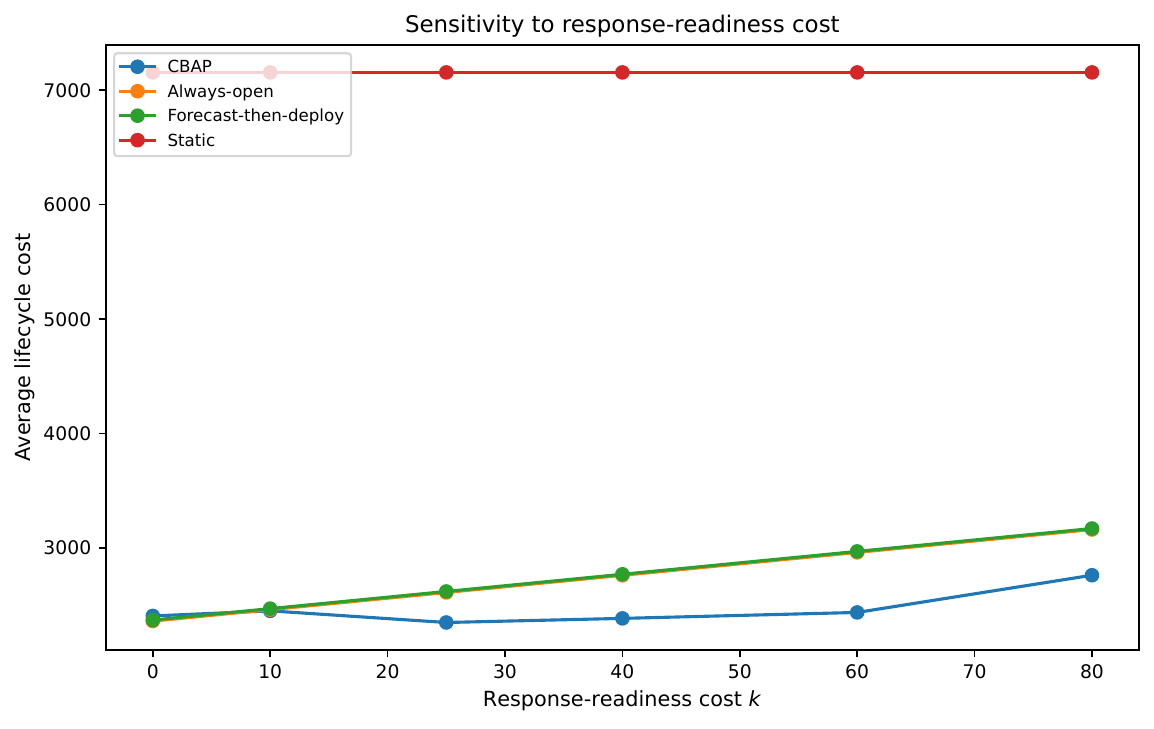}
\label{fig:k_sensitivity}
}
\caption{Numerical mechanisms: mode selection, cost decomposition, and readiness-cost sensitivity.}
\label{fig:main_numerical_mechanism}
\end{figure}

Two-stage recourse can respond only once, at the middle review epoch, and then closes the channel. With $\delta=0.25$, capacity deployed at this fixed point decays over the remaining lifecycle and cannot accommodate late burst paths well. The comparison therefore concerns the limitation of a fixed recourse time in this setting; it does not imply that one-shot recourse is dominated in every capacity-planning problem.

The additional diagnostics in Appendix~\ref{app:additional_diagnostics} and the regime analysis in Appendix~\ref{app:regime_analysis} show where these gains arise. CBAP closes the channel frequently on low-demand paths and preserves it more often when demand is regular or bursty. It therefore combines post-launch response with readiness spending that depends on the observed belief path.

\subsection{BurstGPT Application: Daily Capacity-Readiness Review}
\label{sec:empirical_application}

\paragraph{Data and empirical setting.}
We evaluate CBAP on BurstGPT, a public LLM-serving workload trace from Azure OpenAI GPT services \citep{BurstGPT2025}. Token workload is treated as observed demand, and decisions are made at the daily capacity-planning level. Daily workload retains hourly variation, but CBAP decides only once per day whether to deploy capacity and preserve the rapid-response channel. The application therefore differs from second- or minute-level autoscaling.

The main empirical sample is \texttt{BurstGPT\_1.csv}, which contains 1,429,737 request records over approximately 61 days. Each record includes a timestamp, model type, request-token length, response-token length, total-token length, and log type. We define workload group $g=(\mathrm{ModelType},\mathrm{LogType})$ as a model--log-type pair, producing four groups in the main sample. Requests within each group are aggregated into hourly token workload. For group $g$ and hour $t$, 
$X_{gt}=\sum_{j\in\mathcal R_{gt}} \mathrm{TotalTokens}_{j}$, where $\mathcal R_{gt}$ is the set of requests in group $g$ during hour $t$. We convert token workload to capacity-hour units using $\widetilde X_{gt}=X_{gt}/\mu$, where $\mu=52{,}707.5$ tokens per capacity-hour is the median positive hourly token workload in the sample. This normalization retains the burstiness and temporal concentration of the trace while expressing demand, deployment, shortage, and idle capacity in common units.

Each empirical episode is a rolling seven-day lifecycle within one workload group. An episode is therefore a contiguous segment from a single model--log-type path rather than a segment pooled across groups. Let $g_i$ be the group of episode $i$, and let $t_i$ be its first hour.Daily capacity-hour demand on operating day $d$ is $D_{id}
=
\sum_{\tau=1}^{24}
\widetilde X_{g_i,t_i+24(d-1)+\tau}$, $ d=1,\ldots,7$. Initial capacity is selected before day 1. Six post-launch reviews occur at the beginning of days 1--6; at each review, a policy uses the observed workload prefix through the preceding day to construct a scenario belief over the remaining path and select $(q_r,o_{r+1})$. Hourly profiles form the daily realizations and retain the empirical burst pattern, while response-readiness decisions remain at the daily planning level.

BurstGPT records incoming request and token workload, which we treat as directly observed potential workload. Historical-analog beliefs are therefore constructed from observed workload prefixes rather than capacity-truncated service observations. This application validates the response-readiness decision layer under a common predictive belief; it does not estimate or validate a censored-demand learning model. All implementable policies use the same historical-analog beliefs.

The rolling construction produces 5,092 seven-day windows. To avoid look-ahead bias, we use 3,562 earlier windows as the training pool for historical-analog scenario construction and benchmark calibration, and reserve 1,530 later windows for out-of-sample testing. From these 1,530 held-out windows, we select 500 without replacement using proportional stratified sampling across the four workload groups, with largest-remainder allocation and random seed 202604. The resulting episode-index list is fixed before policy evaluation and is used for every policy, yielding 3,000 daily review states. 
For each selected test episode $i$, historical analogs are drawn only from the training pool and are additionally required to end strictly before the beginning of episode $i$. This episode-specific temporal embargo prevents an overlapping training window from containing observations from the evaluated lifecycle. At each review, eligible analogs are restricted to the same workload group and ranked by similarity in the observed daily prefix.

\paragraph{Cost calibration and belief construction.}
The empirical setting uses normalized cloud-price parameters. We interpret $c_r$ as the cost of deploying compute capacity at review $r$. Pay-as-you-go cloud pricing motivates the cost structure, but the parameters are normalized rather than expressed in dollars. Let $\bar c$ be the cost of one capacity unit over a daily review interval. We set $\bar c=3$, $c_0=3$, and 
$c_r=\bar c\left(1+\lambda\frac{r}{L}\right)$, where $L=7$ is the number of operating days in the lifecycle and $\lambda=0.25$ is a later-response urgency premium. For the six daily review epochs, this gives deployment costs $3.107$, $3.214$, $3.321$, $3.429$, $3.536$, and $3.643$.

We interpret $k_r$ as the cost of preserving rapid response. EC2 On-Demand Capacity Reservations motivate this calibration because they reserve capacity in a specified Availability Zone and charge the equivalent On-Demand rate whether or not the capacity is used.\footnote{See Amazon Web Services, ``On-Demand Capacity Reservations,'' \url{https://docs.aws.amazon.com/AWSEC2/latest/UserGuide/ec2-capacity-reservations.html}, and ``Capacity Reservation pricing and billing,'' \url{https://docs.aws.amazon.com/AWSEC2/latest/UserGuide/capacity-reservations-pricing-billing.html}.} In the baseline, response readiness is equivalent to 16 standby capacity units with readiness intensity $\alpha=1$, so $k_r=\alpha\bar c\times16=48$. The idle-capacity cost is $h=0.5$, the shortage or latency penalty is $b=10$, and the capacity-release parameter is $\delta=0.25$.

All implementable policies use the same historical-analog scenario generator. At each daily review, the algorithm identifies training windows from the same model--log-type group with similar observed prefixes. The remaining paths of the 30 nearest analogs form the scenario belief for CBAP and every benchmark; all 30 paths are used in the candidate-value averages without resampling. Conditional on the selected test episode and its observed prefix, the empirical scenario belief is therefore deterministic. Ties in analog distance are resolved first by the earlier window start time and then by the episode index. Forecast-then-deploy uses a two-day lookahead. For Two-stage recourse, the candidate day with the lowest training-sample timing cost is day 4, which is fixed before out-of-sample evaluation.

\paragraph{Main empirical results.}
Table~\ref{tab:burstgpt_main_results} reports the out-of-sample results. CBAP has the lowest lifecycle cost among implementable policies and leaves 53.6\% of the Static-to-Oracle cost gap. Its cost is 33.44\% lower than Static, 19.40\% lower than Forecast-then-deploy, 18.70\% lower than Two-stage recourse, 3.20\% lower than Always-open, and 2.39\% lower than Myopic response. Always-open and Myopic response are the closest benchmarks. Their performance suggests that repeated daily response flexibility accounts for an important part of the cost reduction. The remaining differences are consistent with additional benefits from continuation-aware decisions and selective closure.

\begin{table}[htbp]
\centering
\small
\caption{BurstGPT daily-review policy comparison: out-of-sample evaluation}
\label{tab:burstgpt_main_results}
\renewcommand{\arraystretch}{1.15}
\setlength{\tabcolsep}{3.5pt}
\begin{tabularx}{\textwidth}{
>{\raggedright\arraybackslash}p{0.22\textwidth}
>{\centering\arraybackslash}p{0.19\textwidth}
>{\centering\arraybackslash}p{0.11\textwidth}
>{\centering\arraybackslash}p{0.13\textwidth}
>{\centering\arraybackslash}p{0.15\textwidth}
>{\centering\arraybackslash}X}
\toprule
\textbf{Policy}
& \textbf{Avg. cost}
& \textbf{Fill rate}
& \textbf{Ready. cost}
& \textbf{Resp. actions}
& \textbf{Norm. regret} \\
\midrule
\multicolumn{6}{l}{\textit{Non-implementable diagnostic bound}} \\
Oracle
& 1581.13
& 1.0000
& 151.10
& 3.148
& 0.000 \\
\midrule
\multicolumn{6}{l}{\textit{Implementable policies}} \\
\textbf{CBAP}
& \textbf{3774.92}
& 0.9106
& 165.31
& 1.546
& \textbf{0.536} \\
Always-open
& 3899.79 {\scriptsize (+3.20\%)}
& 0.9098
& 288.00
& 2.358
& 0.567 \\
Forecast-then-deploy
& 4683.36 {\scriptsize (+19.40\%)}
& 0.9478
& 288.00
& 2.574
& 0.758 \\
Myopic response
& 3867.42 {\scriptsize (+2.39\%)}
& 0.8851
& 288.00
& 2.072
& 0.559 \\
Two-stage recourse
& 4643.37 {\scriptsize (+18.70\%)}
& 0.9738
& 144.00
& 0.656
& 0.749 \\
Static
& 5671.56 {\scriptsize (+33.44\%)}
& 0.9878
& 0.00
& 0.000
& 1.000 \\
\bottomrule
\end{tabularx}
\vspace{3pt}
\begin{minipage}{0.97\textwidth}
\footnotesize
\emph{Notes.} Percentages in parentheses report the cost reduction achieved by CBAP relative to the corresponding benchmark. Oracle observes the realized future workload and is not implementable. ``Resp. actions'' is the average number of positive post-launch deployments per lifecycle. The daily-review application uses 500 out-of-sample rolling seven-day episodes.
\end{minipage}
\end{table}
Fill rate must be considered together with cost. Static and Forecast-then-deploy reach fill rates of 0.9878 and 0.9478, but their greater deployment and idle-capacity exposure raises total cost. Always-open has nearly the same fill rate as CBAP, 0.9098 versus 0.9106, but spends 288.00 on readiness rather than 165.31. This pattern is consistent with cost savings from selective closure under the common information set. Myopic response also spends 288.00 on readiness and has a lower fill rate of 0.8851, suggesting that decisions based only on current-period cost do not fully capture continuation value. Two-stage recourse achieves a high fill rate but cannot adapt its single response time to individual workload paths.

Figure~\ref{fig:burstgpt_daily_main} reports the corresponding cost and action patterns. CBAP limits the idle-capacity exposure of Static and Forecast-then-deploy and spends less on readiness than policies that preserve the channel by construction. It closes the channel in 42.6\% of review states and preserves it in 57.4\%. The mode shares are 35.4\% Exit, 38.8\% Standby, 7.2\% Terminal Response, and 18.6\% Sustained Response. All four modes occur, so both zero and positive deployment may be paired with either closure or preservation.

\begin{figure}[htbp]
\centering
\subfigure[Average lifecycle cost]{
\includegraphics[width=3in]{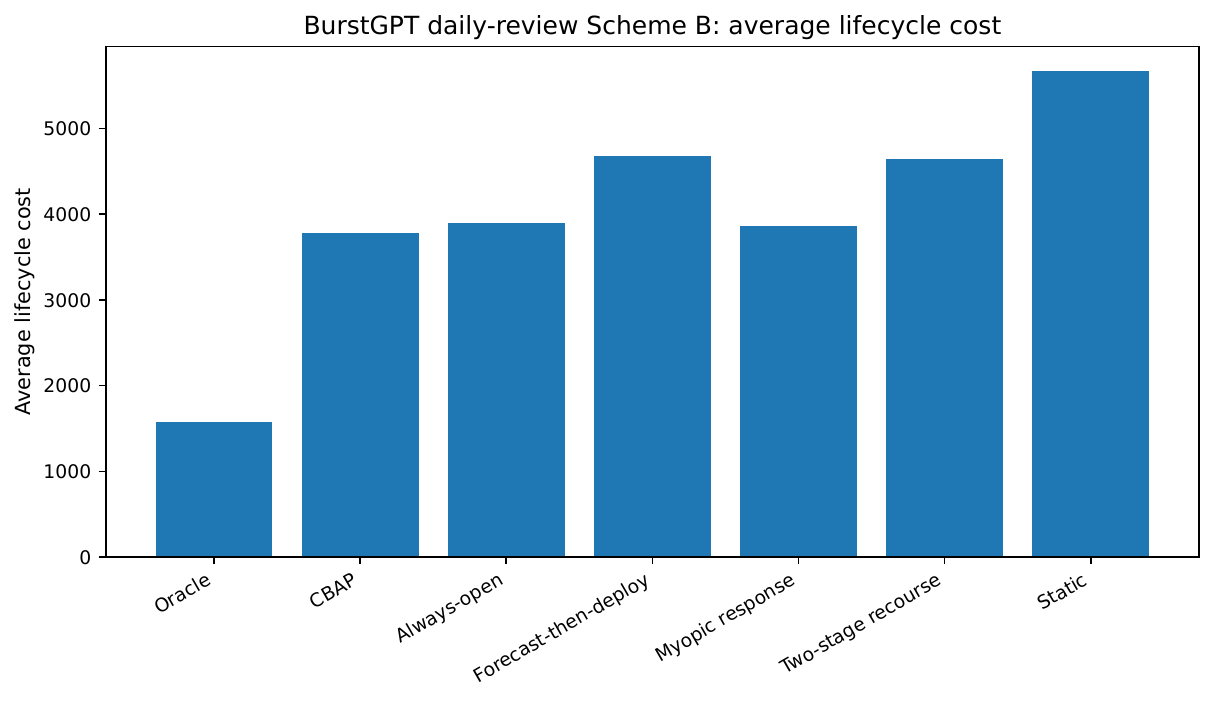}
\label{fig:burstgpt_daily_lifecycle_cost}
}
\subfigure[Cost decomposition]{
\includegraphics[width=3in]{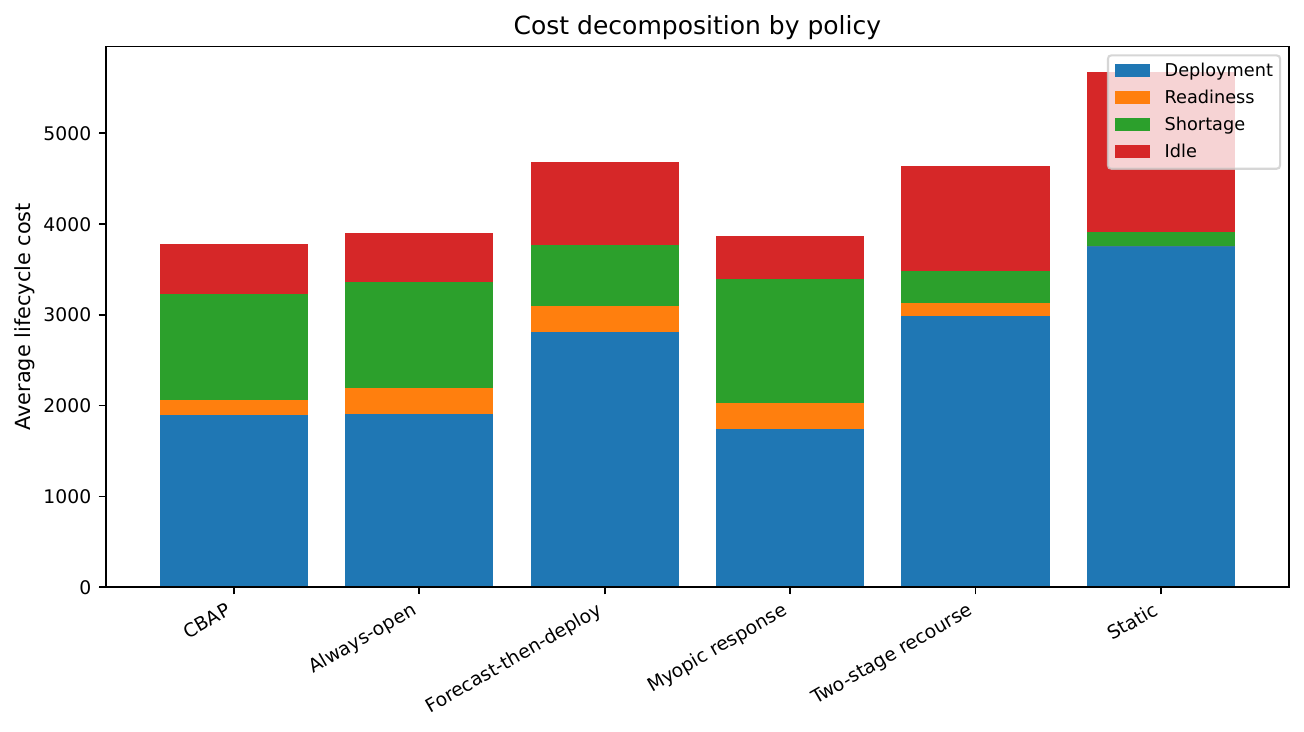}
\label{fig:burstgpt_daily_cost_decomposition}
}
\subfigure[Four-action mode shares]{
\includegraphics[width=3in]{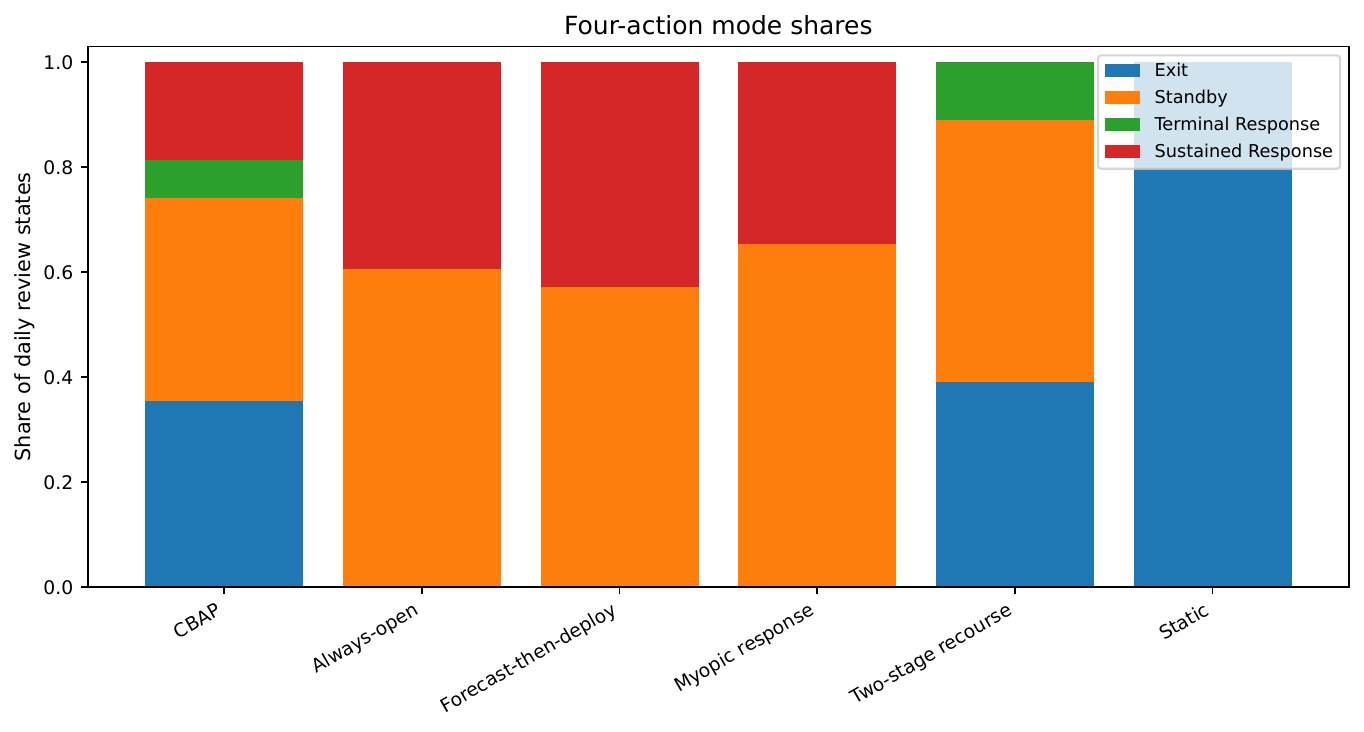}
\label{fig:burstgpt_daily_four_action_modes}
}
\caption{BurstGPT daily-review results under calibrated capacity costs.}
\label{fig:burstgpt_daily_main}
\end{figure}

The appendices report normalized regret, workload-intensity heterogeneity, readiness-cost sensitivity, and the timing diagnostic for Two-stage recourse. The results show that closure is most common in low-intensity windows, whereas preservation and positive response become more important as workload intensity rises.

\section{Conclusion}
\label{sec6}

This paper develops a response-readiness framework for dynamic service-capacity deployment in cold-start AI launches. The preservation--deployment decomposition separates current deployment from the decision to preserve future response feasibility and yields one preservation boundary, two side-specific deployment boundaries, and a conditional base-capacity rule. CBAP operationalizes this structure using scenarios drawn from available predictive workload beliefs, with decision and lifecycle performance bounds. The numerical experiment and BurstGPT-based application evaluate the decision layer and indicate that, in the tested settings, endogenous readiness control lowers lifecycle cost relative to full-window commitment policies by limiting premature deployment and unnecessary readiness spending. The main managerial implication is that deployment and readiness are distinct decisions: zero deployment may represent Standby rather than Exit, while positive deployment need not imply continued readiness.

The current model focuses on a single product and represents configured compute resources through an aggregate serviceable-capacity state. Future research could consider multi-region deployment, heterogeneous GPU types, capacity-reservation portfolios, explicit service-level constraints, and shared compute pools across products. The response-readiness logic may also extend to digital services, online campaigns, emergency capacity planning, and other short-lifecycle operations in which firms repeatedly update demand information while maintaining costly response options.
\bibliographystyle{informs2014}
\bibliography{ref1}
\ECSwitch
\ECHead{\centering Online Appendix to\\
``Dynamic Cloud Service-Capacity Deployment with Costly Response Readiness''}

\makeatletter

\setcounter{section}{0}
\setcounter{subsection}{0}
\setcounter{subsubsection}{0}

\renewcommand{\thesection}{\Alph{section}}
\renewcommand{\thesubsection}{\thesection.\arabic{subsection}}
\renewcommand{\thesubsubsection}{\thesubsection.\arabic{subsubsection}}

\renewcommand{\theHsection}{EC.\Alph{section}}
\renewcommand{\theHsubsection}{EC.\Alph{section}.\arabic{subsection}}
\renewcommand{\theHsubsubsection}{EC.\Alph{section}.\arabic{subsection}.\arabic{subsubsection}}

\setcounter{equation}{0}
\renewcommand{\theequation}{\thesection.\arabic{equation}}
\renewcommand{\theHequation}{EC.\Alph{section}.\arabic{equation}}
\@addtoreset{equation}{section}

\setcounter{figure}{0}
\setcounter{table}{0}
\setcounter{subfigure}{0}
\setcounter{algorithm}{0}

\renewcommand{\thefigure}{\thesection.\arabic{figure}}
\renewcommand{\thetable}{\thesection.\arabic{table}}
\renewcommand{\thealgorithm}{\thesection.\arabic{algorithm}}

\renewcommand{\theHfigure}{EC.\Alph{section}.\arabic{figure}}
\renewcommand{\theHtable}{EC.\Alph{section}.\arabic{table}}
\renewcommand{\theHalgorithm}{EC.\Alph{section}.\arabic{algorithm}}

\makeatother
\section{Technical Support for Section~\ref{sec3}}
\label{app:sec3_proofs}

\subsection{Proof of Proposition~\ref{prop:scalar_boundaries}}
\label{app:proof_scalar_boundaries}

\noindent\emph{Proof.}
We first establish monotonicity of the conditional deployment quantity. Fix $a\in\{0,1\}$ and let $m'\ge m$. Write $q=q_r^{a*}(m,I)$ and $q'=q_r^{a*}(m',I)$, selecting the smallest minimizer at each state. Suppose for contradiction that $q'<q$. Decreasing differences implies
\[
O_r^a(q;m',I)-O_r^a(q';m',I)
\le
O_r^a(q;m,I)-O_r^a(q';m,I)
\le 0,
\]
where the last inequality follows because $q$ minimizes $O_r^a(\cdot;m,I)$. Because $q'$ minimizes $O_r^a(\cdot;m',I)$, the first difference is nonnegative. Both inequalities must hold with equality, so $q'$ also minimizes $O_r^a(\cdot;m,I)$. This contradicts the selection of $q$ as the smallest minimizer because $q'<q$. Therefore, $q_r^{a*}(m,I)$ is nondecreasing in $m$.

By Assumption~\ref{ass:preservation_single_crossing}, $G_r(m,I)$ is continuous in $m$ and is strictly increasing on the relevant preservation crossing interval specified in Assumption~\ref{ass:interior_crossings}. Since
\[
G_r(\underline m_r^P,I)<k_r<G_r(\overline m_r^P,I),
\]
the intermediate-value theorem gives a solution to $G_r(m,I)=k_r$. Strict monotonicity on the crossing interval makes the solution unique; denote it by $m_r^P(k_r;I)$.

For $a\in\{0,1\}$, Assumption~\ref{ass:quantity_value_regular} states that $\psi_r^a(m;I)$ is continuous and strictly decreasing in $m$. Since
\[
\psi_r^a(\underline m_r^a;I)>0>\psi_r^a(\overline m_r^a;I),
\]
the intermediate-value theorem and strict monotonicity give a unique solution to $\psi_r^a(m;I)=0$, denoted by $m_r^{D,a}(I)$.

Consider the conditional deployment decision on side $a$. By Assumption~\ref{ass:quantity_value_regular}, $O_r^a(q;m,I)$ is convex in $q$ on $[0,\bar q_r]$. The smallest minimizer is zero if and only if the right derivative at zero is nonnegative:
\[
q_r^{a*}(m,I)=0
\quad\Longleftrightarrow\quad
\psi_r^a(m;I)\ge0.
\]
Because $\psi_r^a(m;I)$ is strictly decreasing, this is equivalent to
\[
q_r^{a*}(m,I)=0
\quad\Longleftrightarrow\quad
m\le m_r^{D,a}(I),
\]
with the boundary case assigned to zero deployment by the tie-breaking rule. Hence $q_r^{a*}(m,I)>0$ if and only if $m>m_r^{D,a}(I)$.

By \eqref{eq:preservation_deployment_rule}, the channel closes if and only if $G_r(m,I)\le k_r$, or equivalently $m\le m_r^P(k_r;I)$. On the close-after-current side, the selected quantity is $q_r^{0*}(m,I)$, giving Exit when $m\le m_r^{D,0}(I)$ and Terminal Response otherwise. The channel is preserved when $m>m_r^P(k_r;I)$. On this side, quantity $q_r^{1*}(m,I)$ gives Standby when $m\le m_r^{D,1}(I)$ and Sustained Response otherwise. This establishes \eqref{eq:four_region_policy}.

Fix $k_r'>k_r$ while holding $k_{r+1},\ldots,k_R$ and all other primitives fixed. The functions $A_r^0$, $A_r^1$, and $G_r=A_r^0-A_r^1$ exclude the current readiness payment. Therefore, changing $k_r$ leaves $G_r(\cdot,I)$ unchanged. Since
\[
G_r(m_r^P(k_r;I),I)=k_r
\quad\text{and}\quad
G_r(m_r^P(k_r';I),I)=k_r',
\]
strict monotonicity of $G_r$ on the crossing interval implies
\[
m_r^P(k_r';I)>m_r^P(k_r;I).
\]
The deployment thresholds $m_r^{D,0}(I)$ and $m_r^{D,1}(I)$ solve $\psi_r^0(m;I)=0$ and $\psi_r^1(m;I)=0$. Current readiness cost $k_r$ is added outside $O_r^1$ and does not enter $O_r^0$. Holding future readiness costs fixed, it therefore does not affect $\psi_r^0$, $\psi_r^1$, $m_r^{D,0}(I)$, or $m_r^{D,1}(I)$. \hfill$\square$

\subsection{Proof of Corollary~\ref{cor:base_capacity}}
\label{app:base_capacity_representation}

\noindent\emph{Proof.}
Fix review $r$, scalar belief $m$, capacity state $I$, and continuation side $a\in\{0,1\}$. Let $y=I+q$ be post-deployment capacity. Under the assumed dynamics, $g_r(I,q)=\widetilde g_r(I+q)=\widetilde g_r(y)$. Served workload and the next scalar belief are $X_r(y)=\min\{D_r,y\}$ and $m_{r+1}(y)=\mathcal B_r(m,X_r(y),y)$. Therefore,
\[
O_r^a(q;m,I)
=
c_rq
+
\mathbb E_m
\left[
h(I+q-D_r)^+
+
b(D_r-I-q)^+
+
V_{r+1}^a
\left(
m_{r+1}(I+q),
\widetilde g_r(I+q)
\right)
\right].
\]
Using $y=I+q$ and $c_rq=c_r(y-I)$, we obtain
\[
O_r^a(q;m,I)
=
-c_rI+\Phi_r^a(y;m),
\qquad y=I+q.
\]
The term $-c_rI$ is fixed at the current state and does not affect minimization over $q$.

Because $q\in[0,\bar q_r]$, post-deployment capacity satisfies $y\in[I,I+\bar q_r]$. Hence,
\[
\min_{q\in[0,\bar q_r]}O_r^a(q;m,I)
\]
is equivalent to
\[
\min_{y\in[I,I+\bar q_r]}\Phi_r^a(y;m).
\]
By Assumption~\ref{ass:base_capacity}, $\Phi_r^a(\cdot;m)$ is convex and attains a minimum. Because $\bar y_r^a(m)$ is its smallest global minimizer, the smallest minimizer over $[I,I+\bar q_r]$ is the projection of $\bar y_r^a(m)$ onto that interval:
\[
y_r^{a*}(m,I)
=
\min
\left\{
I+\bar q_r,
\max\{I,\bar y_r^a(m)\}
\right\}.
\]
Therefore,
\[
q_r^{a*}(m,I)
=
y_r^{a*}(m,I)-I
=
\min
\left\{
\bar q_r,
[\bar y_r^a(m)-I]^+
\right\}.
\]
If $\bar q_r=\infty$, then $q_r^{a*}(m,I)=[\bar y_r^a(m)-I]^+$.

To establish the dynamic critical-fractile representation, define $\Xi_r^a(y;m)$ as in \eqref{eq:Xi_base_capacity}. Then
\[
\Phi_r^a(y;m)
=
c_ry
+
\mathbb E_m
\left[
h(y-D_r)^+
+
b(D_r-y)^+
\right]
+
\Xi_r^a(y;m).
\]
Because $F_r(\cdot\mid m)$ is continuous,
\[
\frac{\partial}{\partial y}
\mathbb E_m[(y-D_r)^+]
=
F_r(y\mid m),
\]
and
\[
\frac{\partial}{\partial y}
\mathbb E_m[(D_r-y)^+]
=
-\left(1-F_r(y\mid m)\right).
\]
If $\bar y_r^a(m)$ is an interior minimizer and $\Xi_r^a(\cdot;m)$ is differentiable at $\bar y_r^a(m)$, the first-order condition for minimizing $\Phi_r^a(y;m)$ is
\[
0
=
c_r
+
hF_r(\bar y_r^a(m)\mid m)
-
b\left(1-F_r(\bar y_r^a(m)\mid m)\right)
+
\partial_y\Xi_r^a(\bar y_r^a(m);m).
\]
Rearranging yields
\[
F_r(\bar y_r^a(m)\mid m)
=
\frac{
b-c_r-\partial_y\Xi_r^a(\bar y_r^a(m);m)
}{
b+h
}.
\]
At a kink point, the same argument gives the inclusion in \eqref{eq:dynamic_critical_fractile_subgradient}. Boundary minimizers satisfy the corresponding one-sided or normal-cone condition. \hfill$\square$

\subsection{Supplementary Bound for Always-Open Policies}
\label{app:always_open_readiness_loss}

This subsection bounds the cost of an externally required full-window readiness commitment. Let $V_r^\pi(\mathcal S_r)$ denote expected remaining lifecycle cost from state $\mathcal S_r=(\mathbb P_r,I_r,o_r)$ under policy $\pi$, and let $\pi^*$ be an unrestricted optimal policy. Let $\Pi^{\mathrm{AO}}$ contain policies that keep the channel open through every remaining scheduled review, so $o_{s+1}=1$ for $s=r,\ldots,R$. Under this commitment, $k_R$ is the readiness payment assigned to the final evaluation interval, not the value of a separately modeled post-horizon action. The optimal Always-open value is
\begin{equation}
\label{eq:always_open_policy_definition}
V_r^{\mathrm{AO}}(\mathcal S_r)
=
\inf_{\pi\in\Pi^{\mathrm{AO}}}
V_r^\pi(\mathcal S_r).
\end{equation}
This constrained problem has its own Bellman recursion. With $V_{R+1}^{\mathrm{AO}}=0$, for $s=r,\ldots,R$,
\begin{equation}
\label{eq:always_open_recursion}
V_s^{\mathrm{AO}}(\mathbb P_s,I_s)
=
k_s
+
\min_{q\in\mathcal Q_s}
\left\{
c_sq
+
\mathbb E_{\mathbb P_s}
\left[
\ell_s(I_s,q,D_s)
+
V_{s+1}^{\mathrm{AO}}
\left(
\mathbb P_{s+1}(q),
I_{s+1}(q)
\right)
\mid
\mathbb P_s,I_s
\right]
\right\}.
\end{equation}

\begin{proposition}[Always-open readiness loss]
\label{prop:readiness_regret_app}
Suppose the response channel is open at state $\mathcal S_r$. Let the first closure time be
\begin{equation}
\label{eq:tau_closure_definition}
\tau_C^*
=
\inf\{s\in\{r,\ldots,R\}:o_{s+1}^{\pi^*}=0\},
\end{equation}
with $\tau_C^*=\infty$ if $\pi^*$ never closes the response channel. Then
\begin{equation}
\label{eq:always_open_regret_bound}
0
\le
V_r^{\mathrm{AO}}(\mathcal S_r)
-
V_r^{\pi^*}(\mathcal S_r)
\le
\mathbb E_{\pi^*}
\left[
\sum_{s=r}^{R} k_s
\mathbf{1}\{\tau_C^*\le s\}
\mid
\mathcal S_r
\right].
\end{equation}
Equivalently, since readiness costs are deterministic,
\begin{equation}
\label{eq:always_open_probability_bound}
V_r^{\mathrm{AO}}(\mathcal S_r)
-
V_r^{\pi^*}(\mathcal S_r)
\le
\sum_{s=r}^{R}
k_s
\Pr_{\pi^*}
\left(
\tau_C^*\le s
\mid
\mathcal S_r
\right).
\end{equation}
\end{proposition}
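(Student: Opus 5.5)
The lower bound is immediate from the definitions. Every $\pi\in\Pi^{\mathrm{AO}}$ is admissible, so $V_r^{\pi^*}(\mathcal S_r)\le V_r^\pi(\mathcal S_r)$ for all $\pi\in\Pi^{\mathrm{AO}}$. Taking the infimum in \eqref{eq:always_open_policy_definition} gives $V_r^{\pi^*}\le V_r^{\mathrm{AO}}$.

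For the upper bound, the plan is a coupling construction. From $\pi^*$ I will build an always-open policy $\tilde\pi\in\Pi^{\mathrm{AO}}$ that mimics $\pi^*$ and differs from it only in readiness payments:
\begin{itemize}
\item At every history, $\tilde\pi$ deploys exactly the quantity $q_s^{\pi^*}$ prescribed by $\pi^*$ at the same history, and sets $o_{s+1}=1$.
\item Once $\pi^*$ has closed the channel, that is, for $s>\tau_C^*$, $\pi^*$ can only choose $q_s=0$ by \eqref{eq:action_set}. Then $\tilde\pi$ also deploys $q_s=0$ while still paying to keep the channel open. This is feasible because $\tilde\pi$'s channel is open, so $(0,1)\in\mathcal A_s(1)$.
\end{itemize}

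The key step is to check that the two policies generate identical physical trajectories pathwise on each demand path $D_{r:R}$. Capacity evolves by $I_{s+1}=g_s(I_s,q_s)$, which does not depend on the channel state. The observation $X_s(q_s)=\min\{D_s,I_s+q_s\}$ depends only on $(I_s,q_s,D_s)$, so the belief updates $\mathbb P_{s+1}(q_s)$ coincide. By induction on $s$, both policies therefore share $(\mathbb P_s,I_s,q_s)$ and the same operating costs $\ell_s$ and deployment costs $c_sq_s$. The one subtlety is that $\tilde\pi$ must be well defined as a map from its own observed history. Since the observed histories coincide apart from the channel indicator, $\tilde\pi$ can compute the channel state that $\pi^*$ would have at that history and apply $\pi^*$ to it. I expect this measurability and non-anticipativity bookkeeping to be the main obstacle, though it is routine.

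With the coupling in place, the difference in realized cost \eqref{eq:objective_cost} between $\tilde\pi$ and $\pi^*$ is exactly $\sum_{s=r}^R k_s\bigl(1-o_{s+1}^{\pi^*}\bigr)$. Because closure is absorbing, $o_{s+1}^{\pi^*}=0$ if and only if $\tau_C^*\le s$, so the difference equals $\sum_s k_s\mathbf 1\{\tau_C^*\le s\}$. Taking conditional expectations and using $V_r^{\mathrm{AO}}\le V_r^{\tilde\pi}$ gives \eqref{eq:always_open_regret_bound}.

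The probability form \eqref{eq:always_open_probability_bound} follows by linearity of expectation, since each $k_s$ is deterministic. If an exact optimal policy $\pi^*$ is only $\varepsilon$-attained, the same argument applied to an $\varepsilon$-optimal policy, followed by letting $\varepsilon\to0$, would suffice. Under the stated regularity conditions, however, measurable minimizers exist, so $\pi^*$ exists and this step is not needed.
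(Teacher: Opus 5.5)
Your proposal is correct and follows the paper's proof essentially step for step. Both get the lower bound from the optimality of $\pi^*$ over the unrestricted class. Both get the upper bound from a coupled always-open policy $\tilde\pi$ that copies $\pi^*$'s deployment quantities (zero after closure) and keeps the channel open, so it differs from $\pi^*$ on every sample path only by the readiness payments $k_s\mathbf{1}\{\tau_C^*\le s\}$. Your remark that $\tilde\pi$ must track $\pi^*$'s channel state through the history is a bookkeeping detail the paper leaves implicit.
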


\noindent\emph{Proof.}
The lower bound follows because $\pi^*$ is optimal over the unrestricted policy class. For the upper bound, we construct a feasible Always-open policy.

For every $s<\tau_C^*$, define $\widetilde\pi$ to follow $\pi^*$. If $\tau_C^*<\infty$, then at epoch $\tau_C^*$, policy $\pi^*$ chooses quantity $q_{\tau_C^*}^{\pi^*}$ and sets $o_{\tau_C^*+1}^{\pi^*}=0$. Policy $\widetilde\pi$ chooses the same quantity but sets $o_{\tau_C^*+1}=1$. For $\tau_C^*<s\le R$, it keeps the channel open and chooses zero deployment. If $\tau_C^*=\infty$, it follows $\pi^*$. Hence, $\widetilde\pi\in\Pi^{\mathrm{AO}}$.

The two policies induce the same deployment quantities, service outcomes, capacity transitions, and belief updates on every sample path. At the closure epoch, they differ only in the next channel state. Thereafter, both choose zero deployment, but $\widetilde\pi$ keeps the channel open. Because service outcomes and observations depend on $(I_s,q_s,D_s)$ and capacity follows the same transition $g_s(I_s,q_s)$, their physical and belief paths remain identical. The only difference is that $\widetilde\pi$ pays readiness costs at epochs $s$ such that $\tau_C^*\le s$. Therefore,
\[
V_r^{\widetilde\pi}(\mathcal S_r)
-
V_r^{\pi^*}(\mathcal S_r)
=
\mathbb E_{\pi^*}
\left[
\sum_{s=r}^{R} k_s
\mathbf{1}\{\tau_C^*\le s\}
\mid
\mathcal S_r
\right].
\]
By the definition of the infimum in \eqref{eq:always_open_policy_definition},
\[
V_r^{\mathrm{AO}}(\mathcal S_r)
\le
V_r^{\widetilde\pi}(\mathcal S_r).
\]
Combining the last two displays gives the upper bound. Because $k_s$ is deterministic,
\[
\mathbb E_{\pi^*}
\left[
\sum_{s=r}^{R} k_s
\mathbf{1}\{\tau_C^*\le s\}
\mid
\mathcal S_r
\right]
=
\sum_{s=r}^{R}
k_s
\Pr_{\pi^*}
\left(
\tau_C^*\le s
\mid
\mathcal S_r
\right),
\]
which proves the probability form of the bound. \hfill$\square$

\section{Technical Support for Section~\ref{sec4}}
\label{app:sec4_proofs}
\subsection{Backward Training of Fitted Continuation Values}
\label{app:fitted_continuation_training}

When exact learned-model Bellman evaluation is computationally costly, CBAP can use fitted approximations of $V_r^0$ and $V_r^1$. Let
\[
\widetilde V_r^a
\left(
\widehat{\mathbb P}_r,
I_r;
\theta_r^a
\right),
\qquad a\in\{0,1\},
\]
denote value-function approximators trained backward over review epochs. Depending on the state representation and sample size, they may use neural networks, regression models, tree-based learners, or other supervised methods. A direct closed-channel rollout may replace $\widetilde V_r^0$, which is included here for notational symmetry.

For each epoch $r$, construct training states
\[
\left\{
(\widehat{\mathbb P}_{r,j},I_{r,j})
\right\}_{j=1}^{J_r}
\]
from historical or simulated lifecycles by rolling candidate policies forward and recording beliefs and capacities at review $r$. Additional capacity states may be sampled over the relevant operating range to improve coverage near decision boundaries. Repeated application of $g_r(I_r,q_r)$ generates these states under the non-depleting transition, while service outcomes depend on the sampled workload path.

Given trained continuation estimators $\widetilde V_{r+1}^0$ and $\widetilde V_{r+1}^1$, each training state receives two Bellman labels. At training state $j$, define the side-specific quantity estimate as
\begin{equation}
\label{eq:training_state_quantity_value_sec4}
\widehat O_{r,j}^a(q)
=
\widehat O_r^a
\left(
q;
\widehat{\mathbb P}_{r,j},
I_{r,j}
\right),
\qquad
a\in\{0,1\},
\quad
q\in\mathcal Q_r^G.
\end{equation}
Because no further deployment is feasible after closure, the closed-channel label fixes the current quantity at zero:
\begin{equation}
\label{eq:closed_bellman_label_sec4}
Y_{r,j}^0
=
\widehat O_{r,j}^0(0).
\end{equation}
The open-channel label is
\begin{equation}
\label{eq:open_bellman_label_sec4}
Y_{r,j}^1
=
\min
\left\{
\min_{q\in\mathcal Q_r^G}
\widehat O_{r,j}^0(q),
\;
k_r+
\min_{q\in\mathcal Q_r^G}
\widehat O_{r,j}^1(q)
\right\}.
\end{equation}
The estimators are fitted by minimizing
\begin{equation}
\label{eq:fitted_value_loss_by_side}
(\widehat\theta_r^0,\widehat\theta_r^1)
\in
\arg\min_{\theta_r^0,\theta_r^1}
\mathcal L_r(\theta_r^0,\theta_r^1),
\end{equation}
where
\begin{equation}
\label{eq:fitted_value_loss_definition}
\mathcal L_r(\theta_r^0,\theta_r^1)
=
\frac{1}{J_r}
\sum_{j=1}^{J_r}
\sum_{a\in\{0,1\}}
\left[
\widetilde V_r^a
\left(
\widehat{\mathbb P}_{r,j},
I_{r,j};
\theta_r^a
\right)
-
Y_{r,j}^a
\right]^2
+
\lambda_{\mathrm{reg}}
\sum_{a\in\{0,1\}}
\|\theta_r^a\|_2^2.
\end{equation}
The implementation may represent the belief by samples, posterior parameters, or sufficient predictive summaries. This computational choice does not change the state definition in the dynamic program.

When fitted continuation values are used, the paths used to construct training, validation, and out-of-sample evaluation states should be strictly separated to prevent data leakage across temporal horizons. Validation states, rather than evaluation lifecycles, should determine hyperparameters, model architecture, and stopping rules.

\begin{algorithm}[htp]
\small
\caption{Backward Training of Fitted Continuation Values}
\label{alg:value_training}
\begin{algorithmic}
\STATE \textbf{Input:} Training belief-capacity states $\{(\widehat{\mathbb P}_{r,j},I_{r,j})\}_{j=1}^{J_r}$ for $r=1,\ldots,R$; deployment grids $\mathcal Q_r^G$; costs; label scenario sizes; value-function classes $\widetilde V_r^a(\cdot;\theta_r^a)$, $a\in\{0,1\}$; validation states for model selection.
\STATE Set $\widetilde V_{R+1}^0=\widetilde V_{R+1}^1=0$.
\FOR{$r=R,R-1,\ldots,1$}
\FOR{each training state $(\widehat{\mathbb P}_{r,j},I_{r,j})$}
\STATE Draw label-generation scenarios from $\widehat{\mathbb P}_{r,j}$.
\FOR{each $a\in\{0,1\}$ and each $q\in\mathcal Q_r^G$}
\STATE Compute and store $\widehat O_{r,j}^a(q)
=
\widehat O_r^a
\left(
q;
\widehat{\mathbb P}_{r,j},
I_{r,j}
\right)$ using Algorithm~\ref{alg:value_estimation}.
\ENDFOR
\STATE Set $Y_{r,j}^0=\widehat O_{r,j}^0(0)$.
\STATE Set $Y_{r,j}^1$ using \eqref{eq:open_bellman_label_sec4}.
\ENDFOR
\STATE Solve \eqref{eq:fitted_value_loss_by_side} and obtain $(\widehat\theta_r^0,\widehat\theta_r^1)$.
\STATE Set $\widetilde V_r^a(\cdot)=\widetilde V_r^a(\cdot;\widehat\theta_r^a)$ for $a\in\{0,1\}$.
\ENDFOR
\STATE \textbf{Return:} fitted continuation values 
$\left\{
\widetilde V_r^a
\left(
\cdot;
\widehat\theta_r^a
\right)
:
a\in\{0,1\},
\;
r=1,\ldots,R
\right\}$. 
\end{algorithmic}
\end{algorithm}
\subsection{Proof of Theorem~\ref{thm:cbap_regret}}
\label{app:proof_cbap_regret}

\noindent\emph{Proof.}
Fix epoch $r$ and reachable open-channel history $\mathcal H_r$. On $\mathcal E_r^{\mathrm{bel}}\cap\mathcal E_r^{\mathrm{saa}}\cap\mathcal E_r^{\mathrm{cont}}$, the triangle inequality gives, for every $u\in\mathcal K_r$,
\begin{equation}
\label{eq:proof_uniform_candidate_error}
\left|
\widehat Q_r(u)-Q_r^{\mathbb P}(u)
\right|
\le
\varepsilon_r^Q,
\qquad
\varepsilon_r^Q
=
e_r^{\mathrm{bel}}
+
e_r^{\mathrm{saa}}
+
e_r^{\mathrm{cont}}.
\end{equation}

Let $u_r^*$ minimize $Q_r^{\mathbb P}$ over $\mathcal K_r$, and let $\widehat u_r$ be the candidate selected by CBAP. If CBAP closes, $\widehat G_r\le k_r-\rho_r$ implies $\widehat A_r^0\le k_r+\widehat A_r^1-\rho_r$, so closure is the estimated minimizer. If CBAP preserves, $\widehat G_r>k_r-\rho_r$ implies $k_r+\widehat A_r^1<\widehat A_r^0+\rho_r$. Thus, before numerical error, the selected side is within $\rho_r$ of the estimated minimum. Allowing numerical optimization tolerance $\eta_{\mathrm{opt},r}$ gives
\[
\widehat Q_r(\widehat u_r)
\le
\min_{u\in\mathcal K_r}\widehat Q_r(u)
+
\eta_{\mathrm{opt},r}
+
\rho_r.
\]
Using \eqref{eq:proof_uniform_candidate_error},
\begin{align*}
Q_r^{\mathbb P}(\widehat u_r)
&\le \widehat Q_r(\widehat u_r)+\varepsilon_r^Q \\
&\le \widehat Q_r(u_r^*)+\eta_{\mathrm{opt},r}+\rho_r+\varepsilon_r^Q \\
&\le Q_r^{\mathbb P}(u_r^*)+2\varepsilon_r^Q+\eta_{\mathrm{opt},r}+\rho_r.
\end{align*}
The final difference is bounded by $E_r$, proving \eqref{eq:cbap_local_bound}.

For the expected lifecycle bound, define the Bellman gap of the true grid-restricted dynamic program as
\[
\operatorname{Gap}_r(\mathcal S_r,u)
=
Q_r^{\mathbb P}(\mathcal S_r,u)
-
\min_{v\in\mathcal K_r}
Q_r^{\mathbb P}(\mathcal S_r,v).
\]
The finite-horizon performance-difference identity for the randomized online policy is
\begin{equation}
\label{eq:expected_performance_difference}
V_{\mathbb P}^{\mathrm{CBAP}}(\mathcal S_1)
-
V_{\mathbb P}^{\pi_G^*}(\mathcal S_1)
=
\mathbb E_{\mathrm{CBAP}}
\left[
\sum_{r=1}^{R}
\operatorname{Gap}_r(\mathcal S_r,\widehat u_r)
\right].
\end{equation}
The expectation in \eqref{eq:expected_performance_difference} is over demand and CBAP's internal scenario draws.

Let $\mathcal E_r=\mathcal E_r^{\mathrm{bel}}\cap\mathcal E_r^{\mathrm{saa}}\cap\mathcal E_r^{\mathrm{cont}}$. By Assumption~\ref{ass:belief_stability} and the union bound,
\[
\Pr(\mathcal E_r^c)
\le
\beta_r+\gamma_r+\varphi_r.
\]
The bound $\Pr((\mathcal E_r^{\mathrm{saa}})^c)\le\gamma_r$ follows by taking expectations of $\Pr((\mathcal E_r^{\mathrm{saa}})^c\mid\mathcal H_r)\le\gamma_r$. On $\mathcal E_r$, the local result gives $\operatorname{Gap}_r(\mathcal S_r,\widehat u_r)\le E_r$. On $\mathcal E_r^c$, Assumption~\ref{ass:belief_stability} gives $\operatorname{Gap}_r(\mathcal S_r,\widehat u_r)\le\overline\Delta_r$. Therefore,
\[
\mathbb E_{\mathrm{CBAP}}
\left[
\operatorname{Gap}_r(\mathcal S_r,\widehat u_r)
\right]
\le
E_r
+
(\beta_r+\gamma_r+\varphi_r)\overline\Delta_r.
\]
At a closed-channel state, the feasible action set is a singleton and the Bellman gap is zero. Summing over $r$ in \eqref{eq:expected_performance_difference} proves \eqref{eq:cbap_regret_bound}. \hfill$\square$

\subsection{Mode-Selection Stability with Closure Buffering}
\label{app:mode_consistency_note}

The error decomposition also gives a local robustness result for mode selection. At a reachable open-channel history with $r<R$, define the true mode costs as
\begin{equation}
\label{eq:mode_costs_four}
\begin{aligned}
\mathrm{Cost}_r^{\mathrm{Exit}} 
&= Q_r^{\mathbb P}((0,0)), 
& \mathrm{Cost}_r^{\mathrm{Standby}} 
&= Q_r^{\mathbb P}((0,1)), \\
\mathrm{Cost}_r^{\mathrm{Term}} 
&= \inf_{q\in\mathcal Q_r^G,\,q>0} Q_r^{\mathbb P}((q,0)),
& \mathrm{Cost}_r^{\mathrm{Sust}} 
&= \inf_{q\in\mathcal Q_r^G,\,q>0} Q_r^{\mathbb P}((q,1)).
\end{aligned}
\end{equation}
The value $Q_r^{\mathbb P}((q,1))$ already includes readiness payment. Define estimated mode costs analogously using $\widehat Q_r$. If the grid contains no positive quantity, omit the two positive-deployment modes. Let $\Delta_r^{\mathrm{mode}}$ be the difference between the lowest and second-lowest true mode costs.

\begin{corollary}[Mode-selection consistency with closure buffering]
\label{cor:mode_consistency}
Suppose the estimated cost of each available mode differs from its true cost by at most $\varepsilon_r^Q$. If
\[
\Delta_r^{\mathrm{mode}}
>
2\varepsilon_r^Q+\rho_r+\eta_{\mathrm{opt},r},
\]
CBAP selects the optimal mode. For any $\Delta_r^{\mathrm{mode}}$, the one-step mode-cost loss is at most $2\varepsilon_r^Q+\rho_r+\eta_{\mathrm{opt},r}$. Under exact finite-grid enumeration, $\eta_{\mathrm{opt},r}=0$.
\end{corollary}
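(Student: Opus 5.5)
The plan is to reduce Corollary~\ref{cor:mode_consistency} to the same two-sided comparison used in the local part of Theorem~\ref{thm:cbap_regret}, now applied to the (at most four) mode costs instead of the individual candidates $u\in\mathcal K_r$.

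\emph{Step 1: where CBAP lands in the mode ranking.} Each estimated mode cost is a minimum of $\widehat Q_r$ over a subset of $\mathcal K_r$: the Exit and Standby costs are single candidates, and the Terminal and Sustained costs are infima over positive grid quantities on one side. Every mode cost is therefore itself an estimated candidate value. CBAP first minimizes within each side and then compares the sides using the buffer. By the argument in the proof of Theorem~\ref{thm:cbap_regret}, the selected candidate $\widehat u_r$ satisfies
\[
\widehat Q_r(\widehat u_r)\le\min_{u\in\mathcal K_r}\widehat Q_r(u)+\eta_{\mathrm{opt},r}+\rho_r .
\]
Since $\min_u\widehat Q_r(u)$ equals the minimum of the estimated mode costs, the mode $\widehat M$ containing $\widehat u_r$ has estimated cost within $\eta_{\mathrm{opt},r}+\rho_r$ of the smallest estimated mode cost. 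One check is needed here: $\widehat u_r$ lies in mode $\widehat M$, because the mode label is read off from the side and from whether $\widehat q_r^a>0$. So the estimated cost of $\widehat M$ is at most $\widehat Q_r(\widehat u_r)$.

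\emph{Step 2: the one-step loss bound.} Let $M^*$ be the true optimal mode. By hypothesis, each estimated mode cost is within $\varepsilon_r^Q$ of its true cost. Chaining the inequalities gives
\[
\mathrm{Cost}_r^{\widehat M}\le\widehat{\mathrm{Cost}}_r^{\widehat M}+\varepsilon_r^Q\le\widehat{\mathrm{Cost}}_r^{M^*}+\eta_{\mathrm{opt},r}+\rho_r+\varepsilon_r^Q\le\mathrm{Cost}_r^{M^*}+2\varepsilon_r^Q+\rho_r+\eta_{\mathrm{opt},r}.
\]
This is the stated loss bound, and it holds for any value of $\Delta_r^{\mathrm{mode}}$.

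\emph{Step 3: consistency.} Suppose $\widehat M\ne M^*$. Then $\mathrm{Cost}_r^{\widehat M}-\mathrm{Cost}_r^{M^*}\ge\Delta_r^{\mathrm{mode}}$. Combined with Step 2, this contradicts the strict inequality $\Delta_r^{\mathrm{mode}}>2\varepsilon_r^Q+\rho_r+\eta_{\mathrm{opt},r}$, so CBAP selects $M^*$. Under exact enumeration the grid minimization incurs no error, so $\eta_{\mathrm{opt},r}=0$.

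\emph{Main obstacle.} The delicate step is Step 1. The estimated mode costs for Terminal and Sustained use infima over $q>0$, but CBAP's side minimizer $\widehat q_r^a$ ranges over the whole grid, including $q=0$. I would handle this by noting that $\widehat A_r^a=\min\{\widehat Q_r((0,a))-k_ra,\ \inf_{q>0}\widehat Q_r((q,a))-k_ra\}$, so the side minimum is exactly the minimum of that side's two mode costs. The smallest-minimizer tie rule then assigns $\widehat u_r$ to the mode attaining that minimum, up to the $\eta_{\mathrm{opt},r}$ tolerance, which is all Step 1 requires.
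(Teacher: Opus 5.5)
Your proposal is correct and follows essentially the same argument as the paper. You show that the selected mode is an estimated $(\rho_r+\eta_{\mathrm{opt},r})$-minimizer, then chain the $\varepsilon_r^Q$ mode-cost errors to obtain the loss bound $2\varepsilon_r^Q+\rho_r+\eta_{\mathrm{opt},r}$. The only differences are minor: you derive consistency by contradiction from the loss bound, where the paper argues it directly, and you justify the estimated-minimizer property explicitly through the partition of $\mathcal K_r$ into modes. For that step, membership of $\widehat u_r$ in $\widehat M$ already suffices, so your tie-rule discussion is unnecessary.
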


\noindent\emph{Proof.}
Let $u^*$ be the optimal available mode and $\widehat u$ the mode selected by CBAP. The closure buffer and optimization tolerance make $\widehat u$ an estimated $(\rho_r+\eta_{\mathrm{opt},r})$-minimizer:
\[
\widehat{\mathrm{Cost}}_r(\widehat u)
\le
\widehat{\mathrm{Cost}}_r(u^*)+\rho_r+\eta_{\mathrm{opt},r}.
\]
For any $u\ne u^*$, the uniform mode-cost error gives
\[
\widehat{\mathrm{Cost}}_r(u)-\widehat{\mathrm{Cost}}_r(u^*)
\ge
\mathrm{Cost}_r(u)-\mathrm{Cost}_r(u^*)-2\varepsilon_r^Q
\ge
\Delta_r^{\mathrm{mode}}-2\varepsilon_r^Q.
\]
If $\Delta_r^{\mathrm{mode}}>2\varepsilon_r^Q+\rho_r+\eta_{\mathrm{opt},r}$, every nonoptimal mode is more than $\rho_r+\eta_{\mathrm{opt},r}$ above the estimated best mode, so CBAP selects $u^*$. In general,
\begin{align*}
\mathrm{Cost}_r(\widehat u)
&\le \widehat{\mathrm{Cost}}_r(\widehat u)+\varepsilon_r^Q \\
&\le \widehat{\mathrm{Cost}}_r(u^*)+\rho_r+\eta_{\mathrm{opt},r}+\varepsilon_r^Q \\
&\le \mathrm{Cost}_r(u^*)+2\varepsilon_r^Q+\rho_r+\eta_{\mathrm{opt},r},
\end{align*}
which proves the loss bound. \hfill$\square$

\section{Additional Numerical Analyses}
\label{app:numerical_analyses}

\subsection{Additional Performance Diagnostics}
\label{app:additional_diagnostics}

Figure~\ref{fig:appendix_cost_regret} reports the lifecycle-cost and normalized-regret comparisons from Table~\ref{tab:baseline_results}. CBAP has normalized regret 0.047, compared with 0.091 for Always-open, 0.101 for Myopic response, and 0.145 for Forecast-then-deploy. The comparisons associate CBAP's advantage with both continuation-aware deployment and selective readiness preservation.

\begin{figure}[htbp]
\centering
\subfigure[Average lifecycle cost]{
\includegraphics[width=3in]{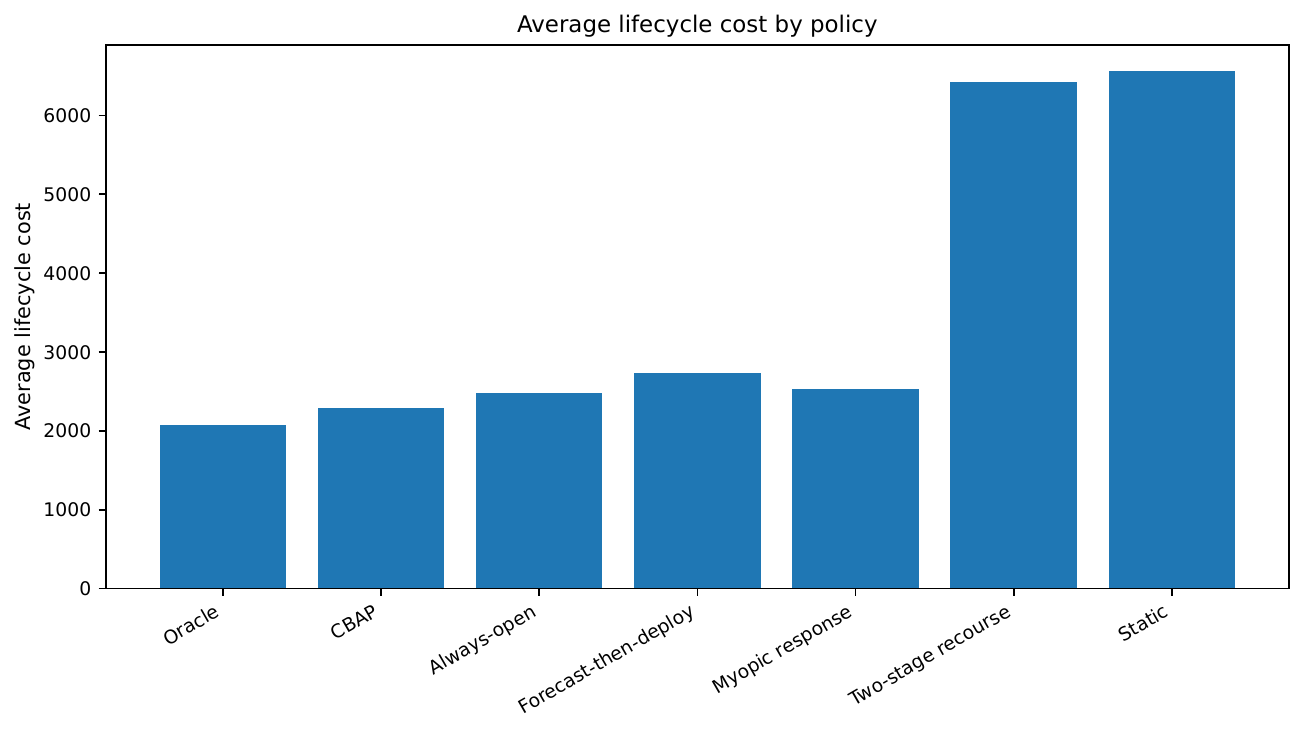}
\label{fig:appendix_avg_cost}
}
\subfigure[Normalized regret]{
\includegraphics[width=3in]{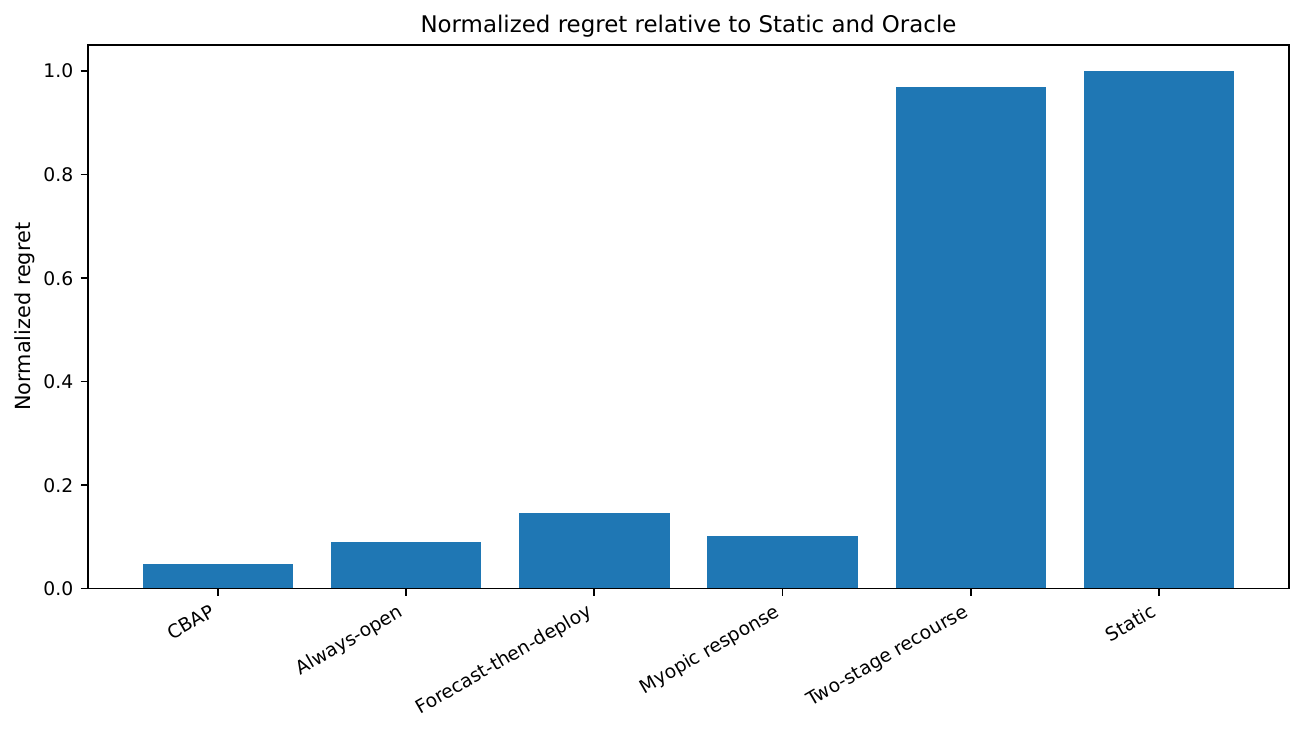}
\label{fig:appendix_regret}
}
\caption{Additional numerical diagnostics: lifecycle cost and normalized regret.}
\label{fig:appendix_cost_regret}
\end{figure}

Figure~\ref{fig:readiness_saving_appendix} shows how CBAP's readiness-cost saving relative to Always-open changes with $k_r$. The saving grows as readiness becomes more expensive because Always-open pays at every review, whereas CBAP closes when the estimated continuation benefit no longer covers the payment.

\begin{figure}[htbp]
\centering
\includegraphics[width=3in]{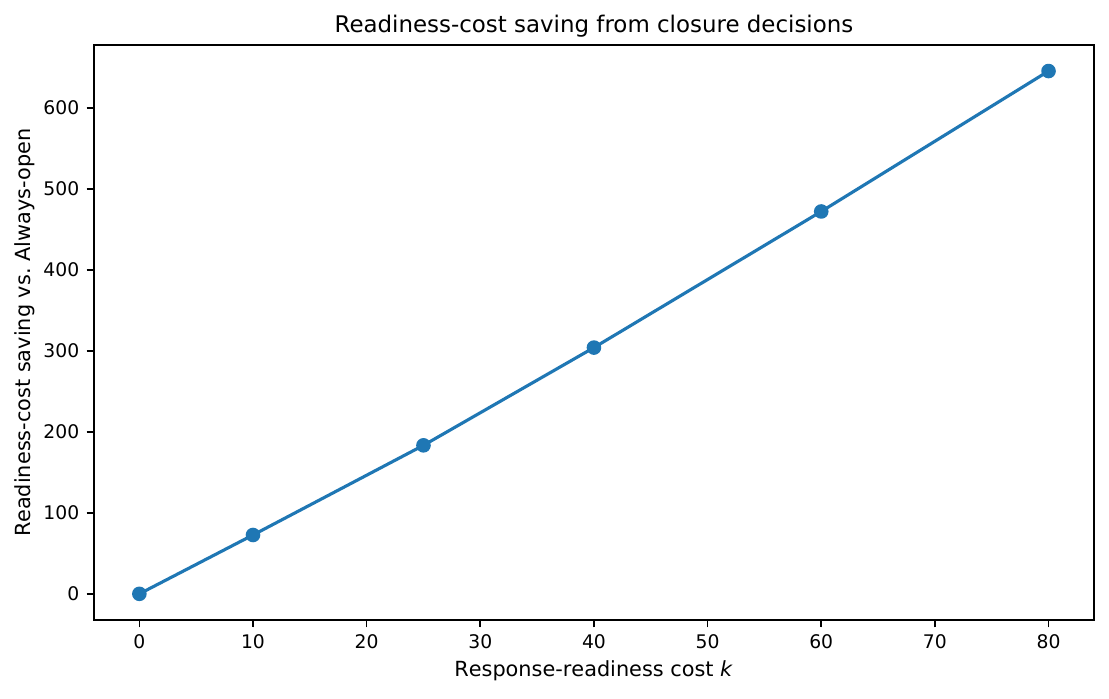}
\caption{Readiness-cost saving of CBAP relative to Always-open in the controlled numerical experiment.}
\label{fig:readiness_saving_appendix}
\end{figure}

Table~\ref{tab:mode_shares_appendix} reports the action shares underlying Figure~\ref{fig:mode_shares_four_action}. CBAP closes in 69.2\% of review states, preserves the channel in 30.9\%, and deploys positive capacity in 36.4\%. Standby is rare in this calibration, so preservation usually accompanies deployment. The benchmark shares reflect their policy restrictions: Always-open, Forecast-then-deploy, and Myopic response never close, while Static never deploys after launch.

\begin{table}[htbp]
\centering
\small
\caption{Four-action shares across review-time decisions in the numerical experiment}
\label{tab:mode_shares_appendix}
\renewcommand{\arraystretch}{1.15}
\begin{tabular}{lrrrr}
\toprule
\textbf{Policy} & \textbf{Exit} & \textbf{Standby} & \textbf{Terminal Resp.} & \textbf{Sustained Resp.} \\
\midrule
CBAP & 0.626 & 0.011 & 0.066 & 0.298 \\
Always-open & 0.000 & 0.517 & 0.000 & 0.483 \\
Forecast-then-deploy & 0.000 & 0.555 & 0.000 & 0.445 \\
Myopic response & 0.000 & 0.468 & 0.000 & 0.533 \\
Two-stage recourse & 0.500 & 0.400 & 0.100 & 0.000 \\
Static & 1.000 & 0.000 & 0.000 & 0.000 \\
\bottomrule
\end{tabular}
\end{table}

Figure~\ref{fig:numerical_mode_sensitivity_k} shows that higher readiness cost shifts probability from Standby and Sustained Response toward Exit and Terminal Response. The deployment and preservation margins therefore remain distinct: a higher $k_r$ changes whether the channel is kept open, but does not require current deployment to fall to zero.

\begin{figure}[htbp]
\centering
\includegraphics[width=3in]{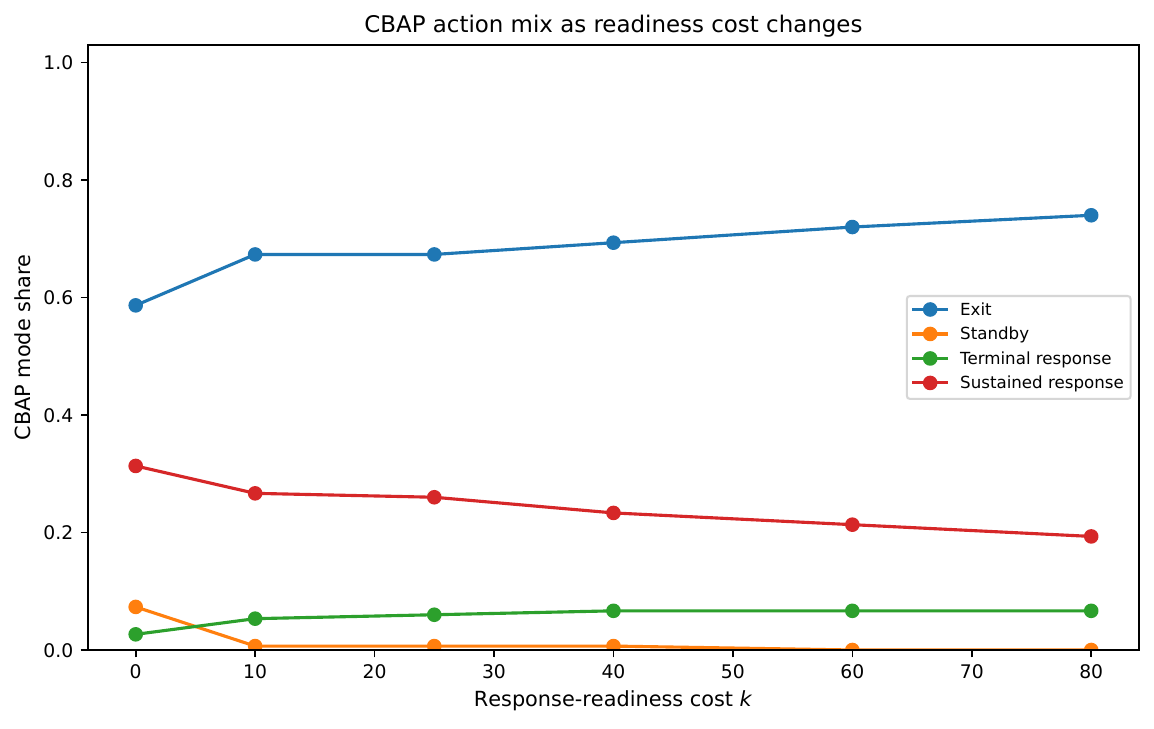}
\caption{CBAP four-action mode shares as readiness cost varies in the controlled numerical experiment.}
\label{fig:numerical_mode_sensitivity_k}
\end{figure}

\subsection{Performance by Workload Regime}
\label{app:regime_analysis}

Each simulated launch belongs to an unobserved Low, Regular, or Burst regime. Figure~\ref{fig:regime_cost_reduction} shows that the source of the cost reduction varies across regimes. On low-demand paths, CBAP limits unnecessary response and readiness spending. On regular and burst paths, it retains the ability to deploy after informative observations arrive.

\begin{figure}[htbp]
\centering
\subfigure[Average lifecycle cost by regime]{
\includegraphics[width=3in]{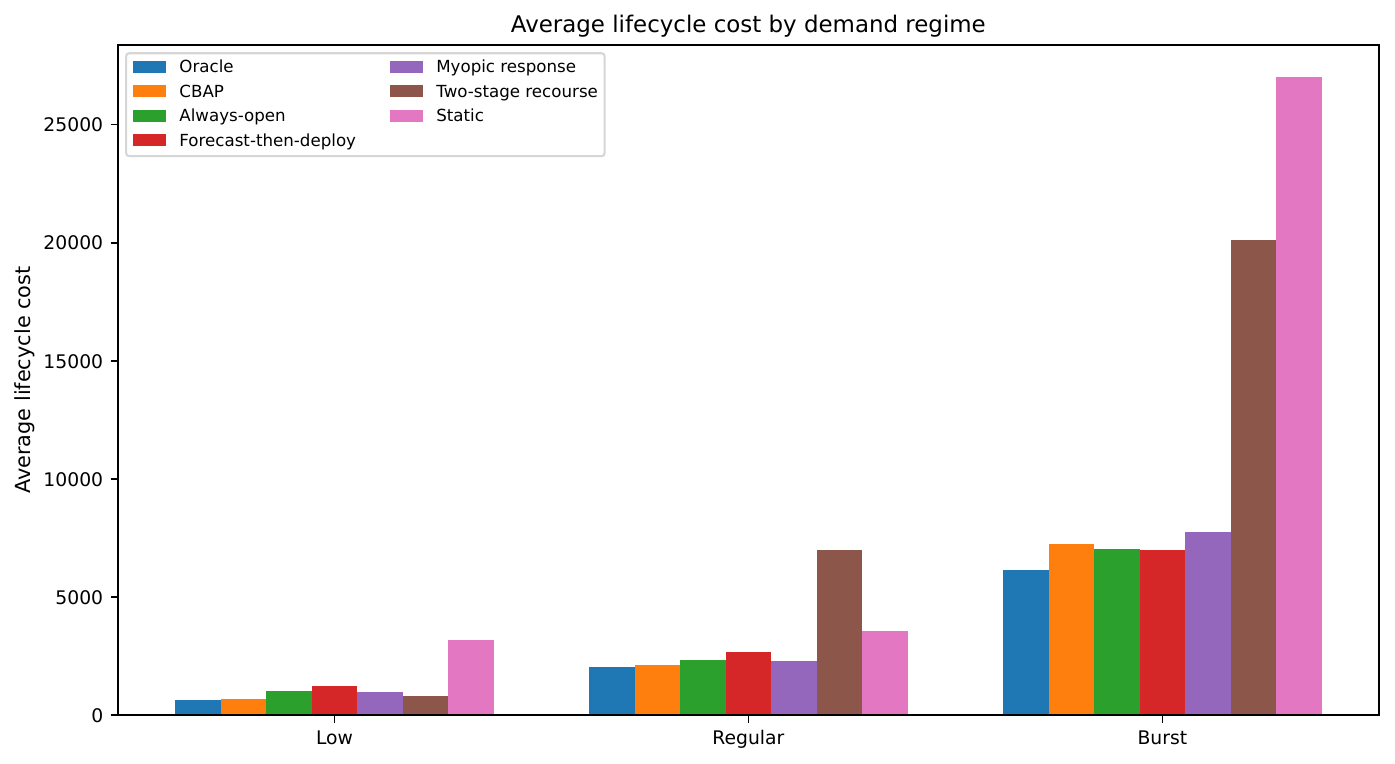}
\label{fig:regime_cost}
}
\subfigure[CBAP cost reduction by regime]{
\includegraphics[width=3in]{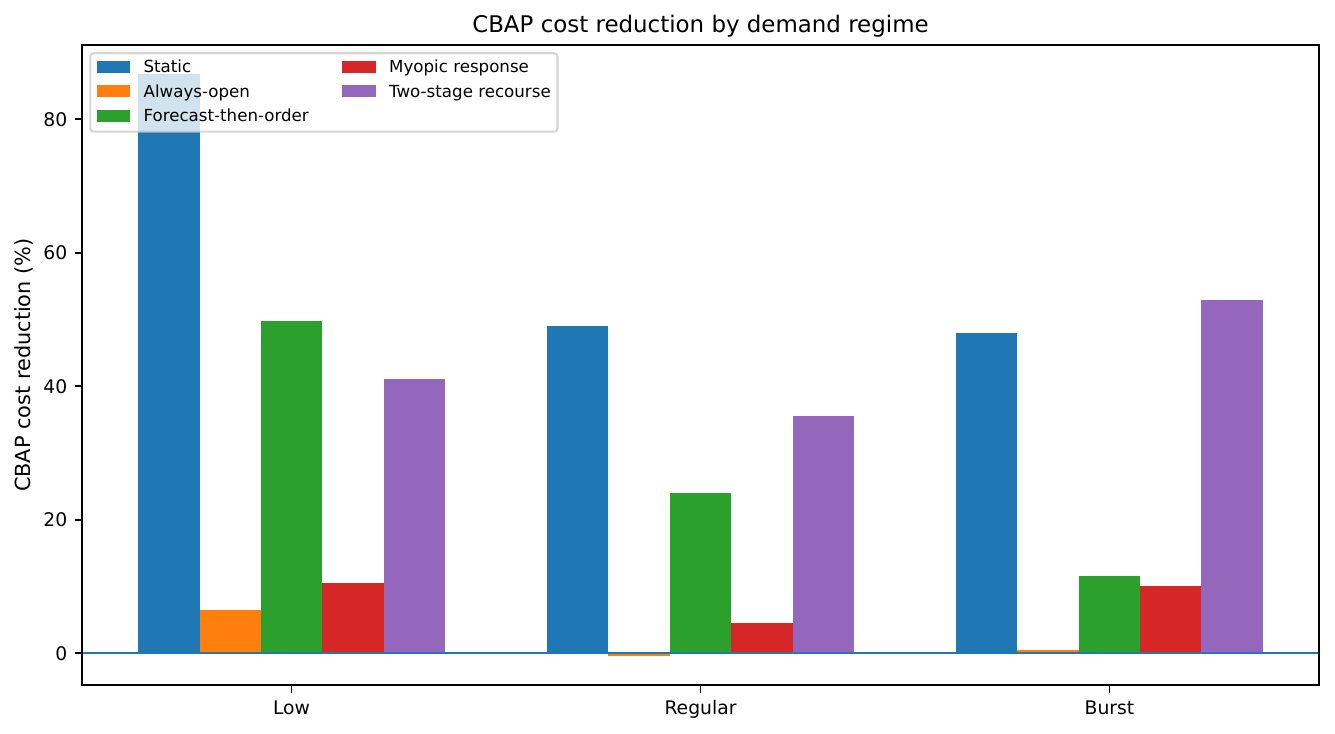}
\label{fig:regime_reduction}
}
\caption{Regime-level cost comparison in the controlled numerical experiment.}
\label{fig:regime_cost_reduction}
\end{figure}

Figure~\ref{fig:regime_modes_costs} and Table~\ref{tab:regime_mode_shares} report this adaptation. Exit accounts for 85.3\% of Low-regime decisions but 30.0\% in the Burst regime. Sustained Response rises from 10.4\% to 61.3\%, with the Regular regime between these cases. CBAP changes both deployment and preservation as evidence about the latent regime accumulates.

\begin{figure}[htbp]
\centering
\subfigure[CBAP four-action shares by regime]{
\includegraphics[width=3in]{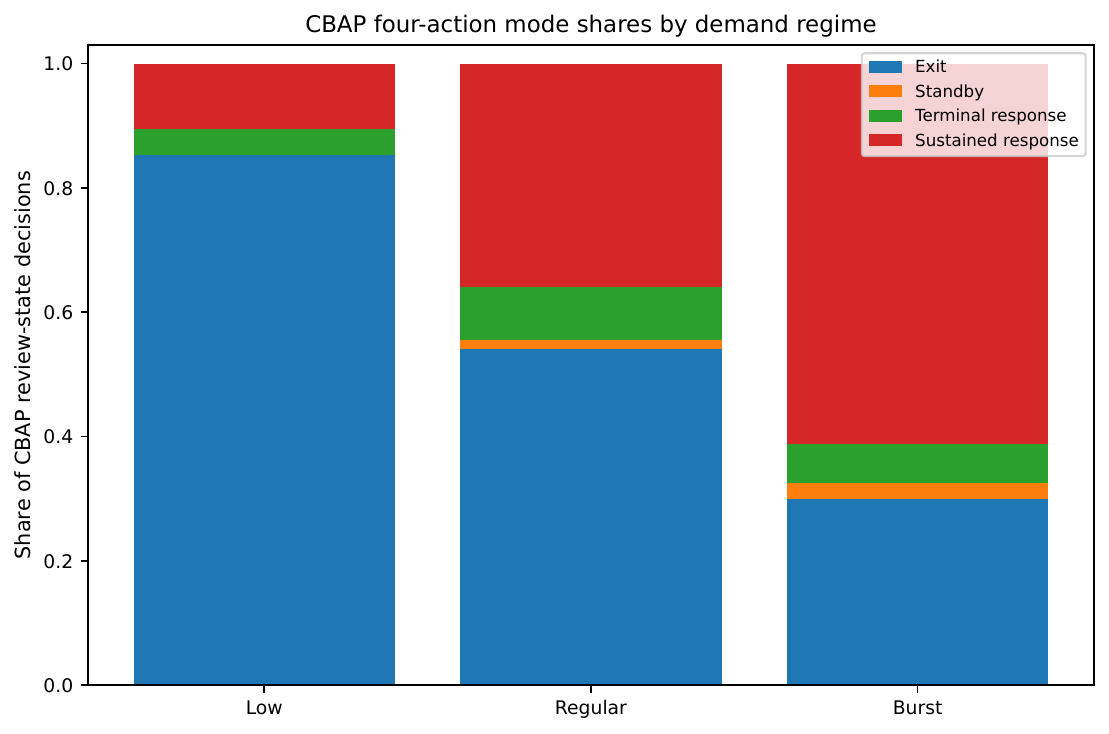}
\label{fig:regime_modes}
}
\subfigure[CBAP cost decomposition by regime]{
\includegraphics[width=3in]{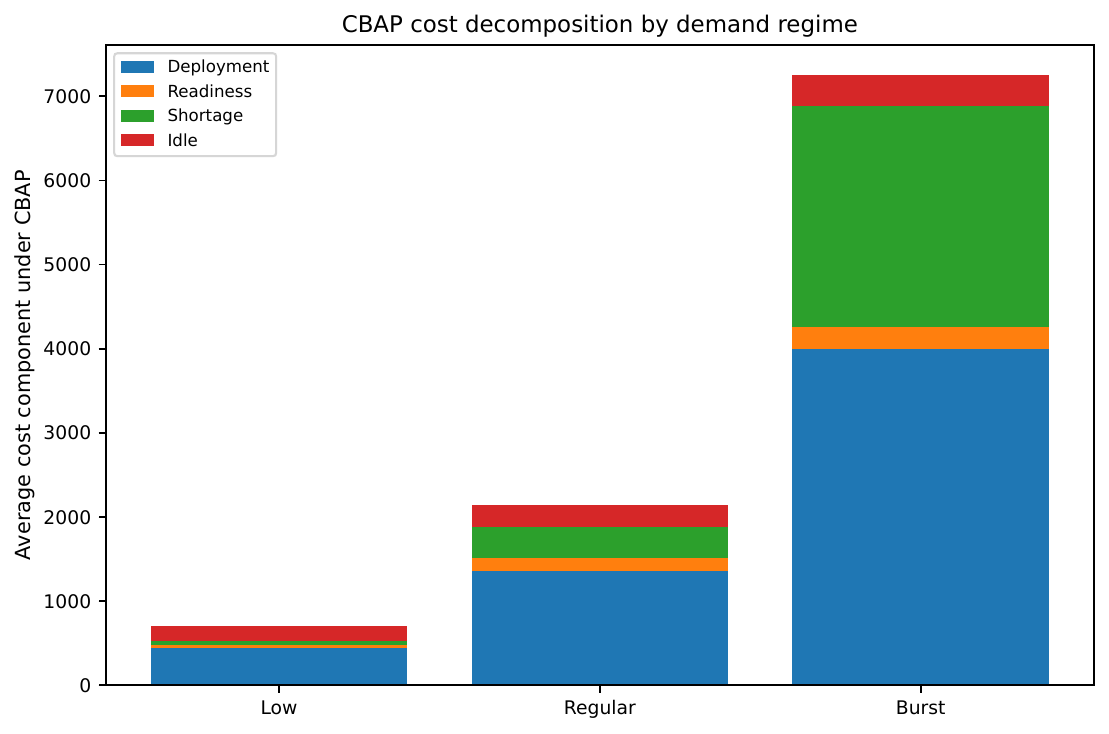}
\label{fig:regime_cbap_cost_decomposition}
}
\caption{Regime-level CBAP behavior in the controlled numerical experiment.}
\label{fig:regime_modes_costs}
\end{figure}

\begin{table}[htbp]
\centering
\small
\caption{CBAP four-action shares by latent demand regime}
\label{tab:regime_mode_shares}
\renewcommand{\arraystretch}{1.15}
\begin{tabular}{lrrrr}
\toprule
\textbf{Regime} & \textbf{Exit} & \textbf{Standby} & \textbf{Terminal Resp.} & \textbf{Sustained Resp.} \\
\midrule
Low & 0.853 & 0.000 & 0.042 & 0.104 \\
Regular & 0.541 & 0.015 & 0.085 & 0.359 \\
Burst & 0.300 & 0.025 & 0.063 & 0.613 \\
\bottomrule
\end{tabular}
\end{table}

\section{Additional BurstGPT Empirical Analyses}
\label{app:burstgpt_empirical_diagnostics}
\subsection{Normalized Regret and Cost Reduction}
\label{app:burstgpt_regret_cost_reduction}

Figure~\ref{fig:burstgpt_appendix_regret_reduction} complements Table~\ref{tab:burstgpt_main_results}. CBAP has normalized regret 0.536, compared with 0.559 for Myopic response and 0.567 for Always-open. Forecast-then-deploy and Two-stage recourse leave larger shares of the Static-to-Oracle gap, at 0.758 and 0.749. The two closest benchmarks permit repeated response; CBAP differs in how it values continuation and closure.

\begin{figure}[htbp]
\centering
\subfigure[Normalized regret]{
\includegraphics[width=3in]{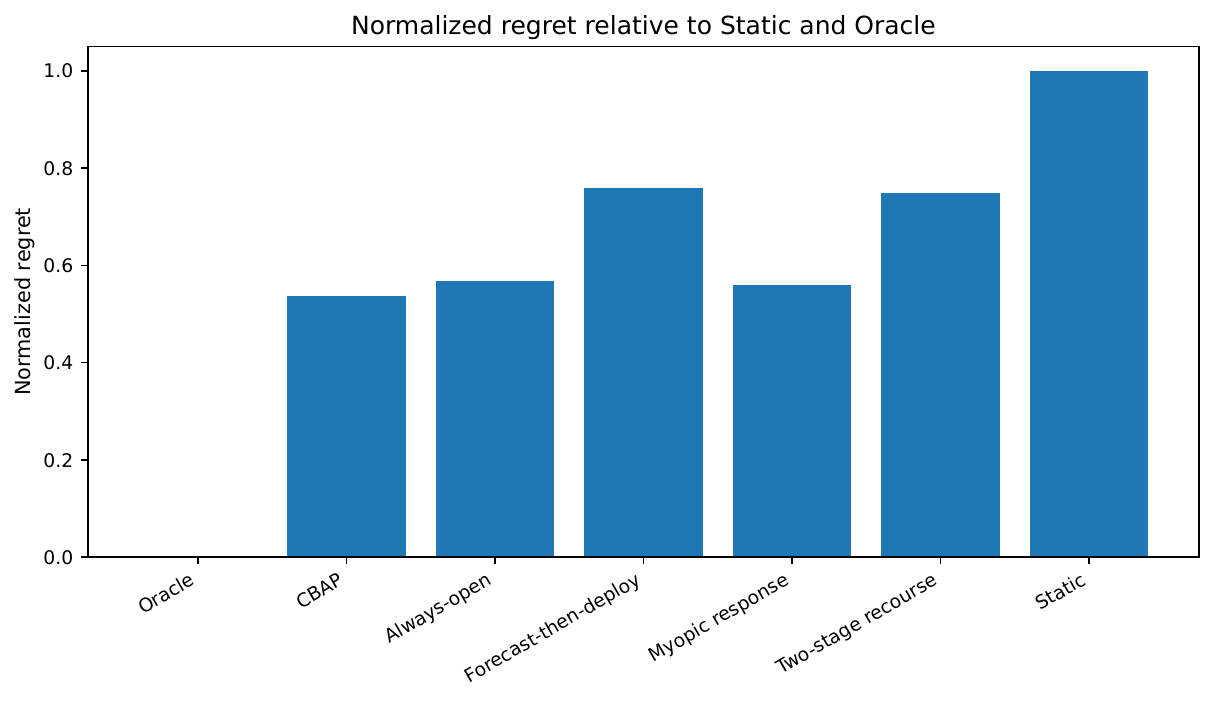}
\label{fig:burstgpt_appendix_regret}
}
\subfigure[Cost reduction relative to Static]{
\includegraphics[width=3in]{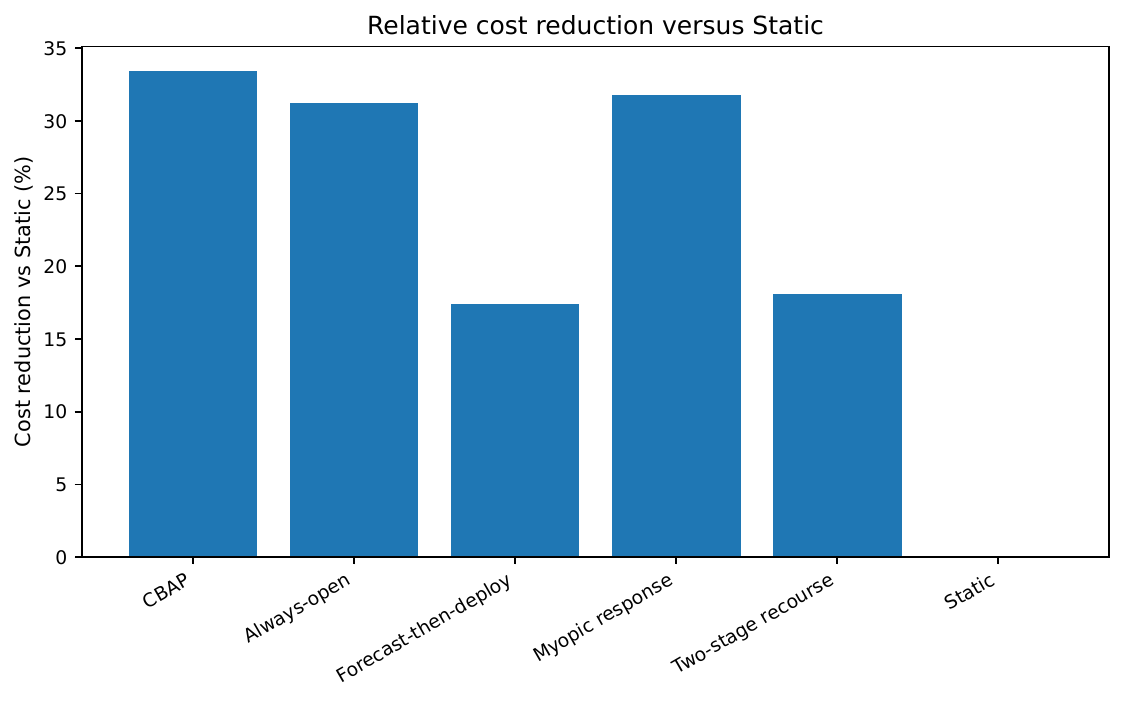}
\label{fig:burstgpt_cost_reduction}
}
\caption{Additional BurstGPT daily-review performance diagnostics.}
\label{fig:burstgpt_appendix_regret_reduction}
\end{figure}

Table~\ref{tab:burstgpt_daily_mode_shares} shows that CBAP uses all four modes. It makes no positive deployment in 74.2\% of review states, but more than half of those states are Standby rather than Exit. It also separates positive response into Terminal Response (7.2\%) and Sustained Response (18.6\%). In contrast, the always-open policies can use only Standby and Sustained Response, and Two-stage recourse cannot sustain the channel after its single deployment.

\begin{table}[htbp]
\centering
\small
\caption{Four-action shares in the BurstGPT daily-review application}
\label{tab:burstgpt_daily_mode_shares}
\renewcommand{\arraystretch}{1.15}
\begin{tabular}{lrrrr}
\toprule
\textbf{Policy} & \textbf{Exit} & \textbf{Standby} & \textbf{Terminal Resp.} & \textbf{Sustained Resp.} \\
\midrule
CBAP & 0.354 & 0.388 & 0.072 & 0.186 \\
Always-open & 0.000 & 0.607 & 0.000 & 0.393 \\
Forecast-then-deploy & 0.000 & 0.571 & 0.000 & 0.429 \\
Myopic response & 0.000 & 0.655 & 0.000 & 0.345 \\
Two-stage recourse & 0.391 & 0.500 & 0.109 & 0.000 \\
Static & 1.000 & 0.000 & 0.000 & 0.000 \\
\bottomrule
\end{tabular}
\end{table}

\subsection{Workload-Intensity Heterogeneity}
\label{app:burstgpt_heterogeneity}

We classify workload windows as Low, Medium, or High using total lifecycle demand. Figure~\ref{fig:burstgpt_intensity} and Table~\ref{tab:burstgpt_intensity_cbap} show how CBAP adjusts across these groups. In Low-intensity windows, Exit accounts for 62.3\% of decisions and average readiness cost is 68.45. In Medium and High windows, preservation accounts for 77.0\% and 72.0\% of decisions, respectively, and Standby is the largest mode. The High-intensity fill rate is 0.8060 despite frequent preservation, indicating that readiness does not eliminate service shortfalls on the most demanding paths.

\begin{figure}[htbp]
\centering
\subfigure[CBAP four-action shares by workload intensity]{
\includegraphics[width=3in]{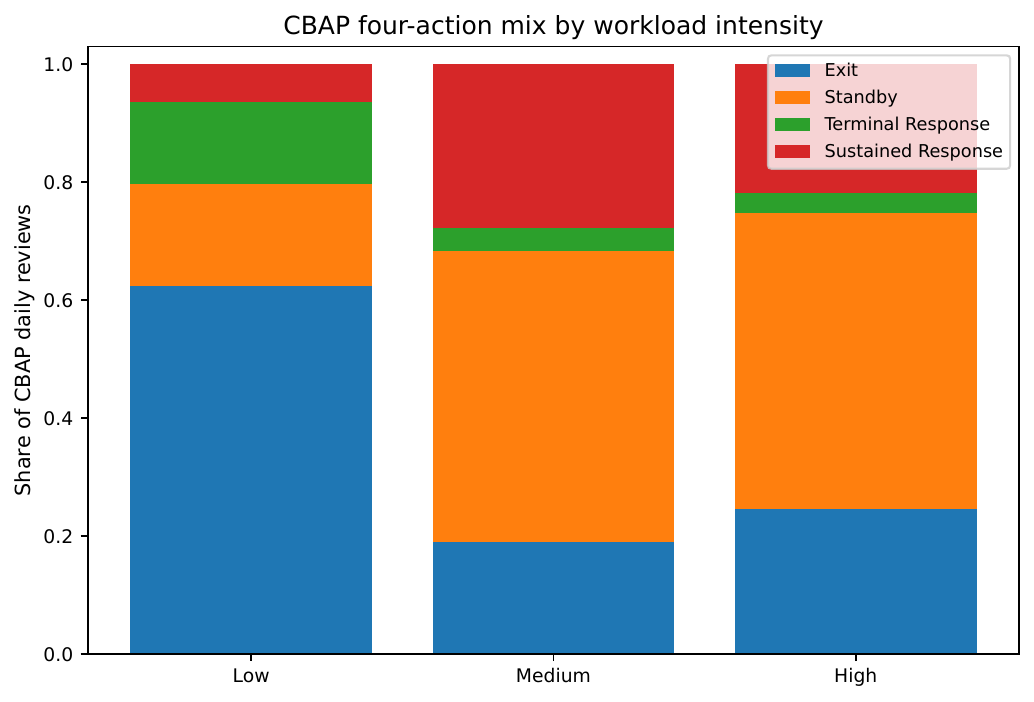}
\label{fig:burstgpt_intensity_modes}
}
\subfigure[Lifecycle cost by workload intensity]{
\includegraphics[width=3in]{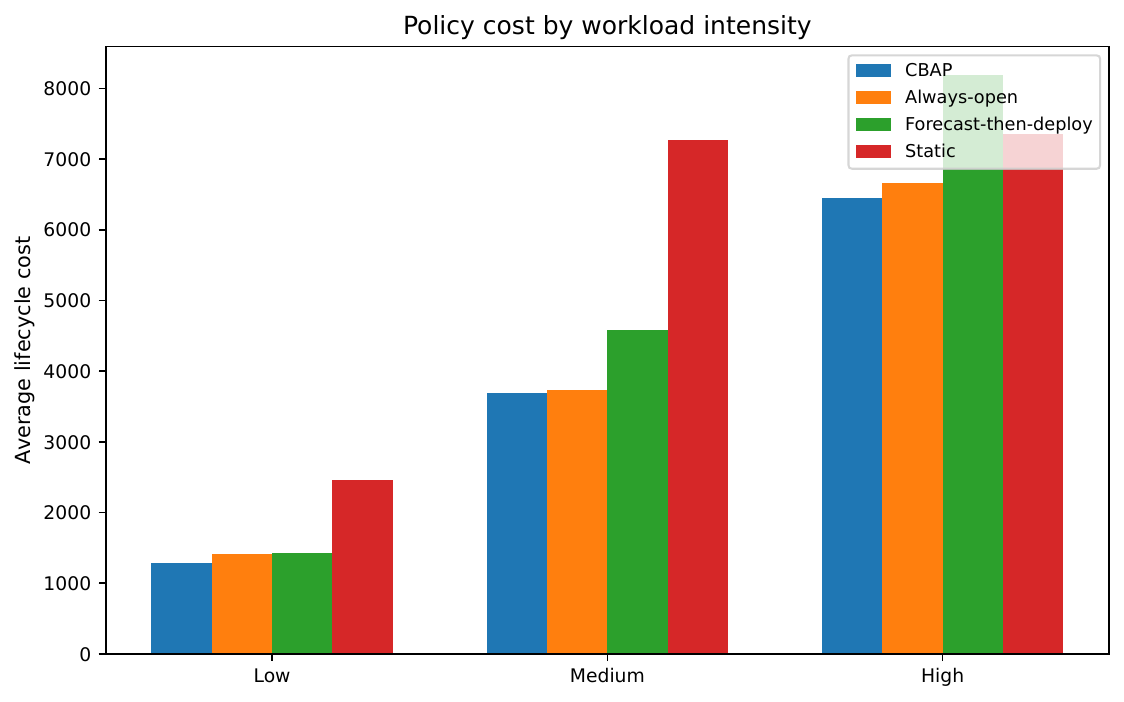}
\label{fig:burstgpt_intensity_cost}
}
\caption{Heterogeneity of CBAP behavior and lifecycle cost by workload intensity.}
\label{fig:burstgpt_intensity}
\end{figure}

\begin{table}[htbp]
\centering
\small
\caption{CBAP behavior by workload intensity in the BurstGPT daily-review application}
\label{tab:burstgpt_intensity_cbap}
\renewcommand{\arraystretch}{1.15}
\begin{tabular}{lrrrrrrr}
\toprule
\textbf{Intensity} & \textbf{Avg. cost} & \textbf{Fill rate} & \textbf{Ready. cost} & \textbf{Exit} & \textbf{Standby} & \textbf{Term.} & \textbf{Sust.} \\
\midrule
Low & 1287.90 & 0.9735 & 68.45 & 0.623 & 0.174 & 0.139 & 0.064 \\
Medium & 3697.39 & 0.9480 & 221.82 & 0.189 & 0.493 & 0.040 & 0.277 \\
High & 6450.28 & 0.8060 & 207.41 & 0.246 & 0.502 & 0.034 & 0.218 \\
\bottomrule
\end{tabular}
\end{table}

\subsection{Readiness-Cost Sensitivity}
\label{app:burstgpt_readiness_sensitivity}

We vary the readiness-intensity parameter $\alpha$ in
\begin{equation}
\label{eq:appendix_readiness_intensity}
k_r=16\alpha\bar c,
\end{equation}
where $\bar c=3$. The values $\alpha\in\{0,0.25,0.5,1.0,1.5,2.0\}$ correspond to $k_r\in\{0,12,24,48,72,96\}$.

Figure~\ref{fig:burstgpt_readiness_sensitivity} shows that CBAP closes more often as readiness becomes more expensive. Always-open is close to CBAP when $\alpha$ is low, but its relative cost rises with $\alpha$ because it cannot avoid scheduled readiness payments. The shift toward closure limits CBAP's readiness exposure as $\alpha$ increases.

\begin{figure}[htbp]
\centering
\subfigure[Lifecycle cost by readiness intensity]{
\includegraphics[width=3in]{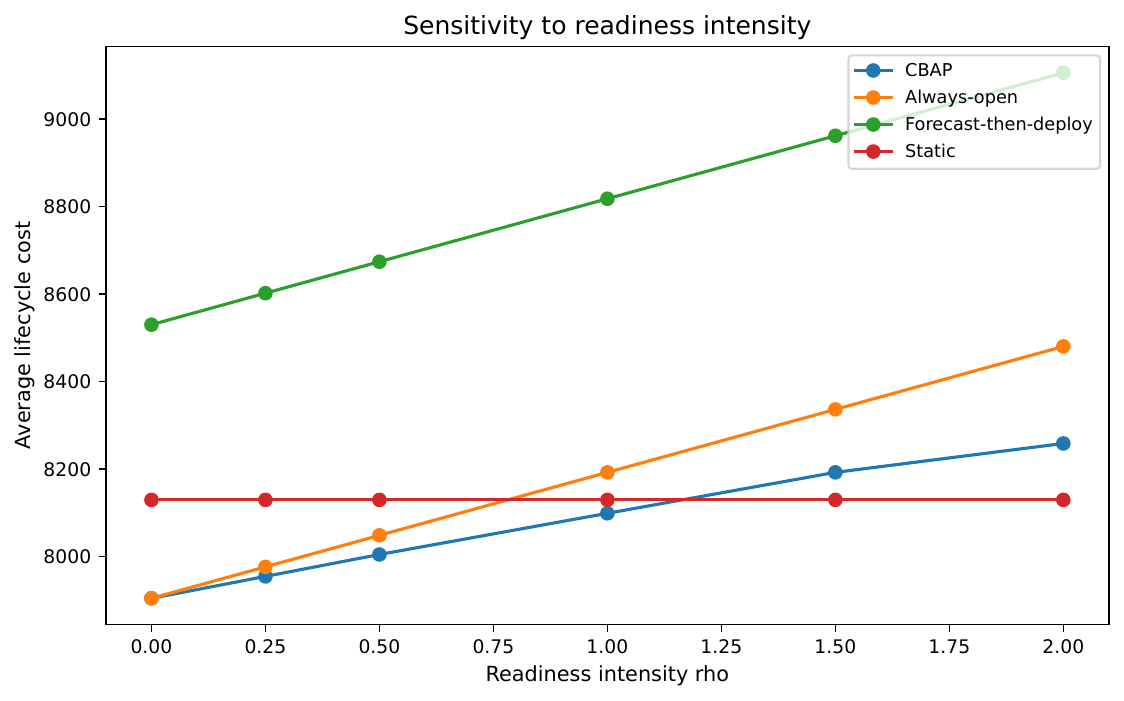}
\label{fig:burstgpt_readiness_cost}
}
\subfigure[CBAP four-action shares by readiness intensity]{
\includegraphics[width=3in]{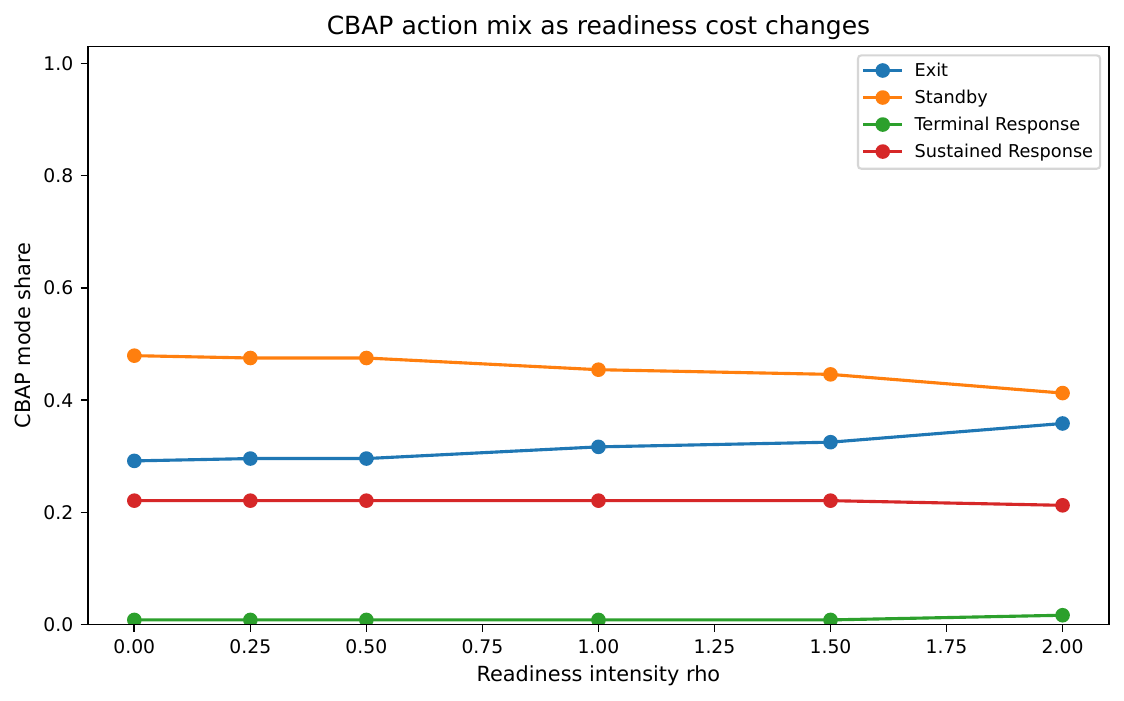}
\label{fig:burstgpt_readiness_modes}
}
\caption{Sensitivity to readiness intensity in the BurstGPT daily-review application.}
\label{fig:burstgpt_readiness_sensitivity}
\end{figure}

\subsection{Timing Diagnostics for Two-Stage Recourse}
\label{app:two_stage_timing}

The Two-stage benchmark permits one post-launch deployment. Earlier recourse leaves more time for capacity to serve demand but uses less information; later recourse uses more observations but delays response and requires readiness to be preserved for longer. The training costs are not monotone in the recourse day, reflecting this trade-off. Day 4 has the lowest training cost, 12,575.05, and is fixed before the out-of-sample evaluation in Table~\ref{tab:burstgpt_main_results}.

\begin{table}[htbp]
\centering
\small
\caption{Training-sample selection of Two-stage recourse timing in the daily-review application}
\label{tab:two_stage_training_selection}
\renewcommand{\arraystretch}{1.15}
\begin{tabular}{lrrrrrr}
\toprule
\textbf{Recourse day} & 1 & 2 & 3 & 4 & 5 & 6 \\
\midrule
\textbf{Training cost} & 13671.05 & 13549.58 & 13226.07 & \textbf{12575.05} & 13902.74 & 13858.16 \\
\bottomrule
\end{tabular}
\vspace{3pt}
\begin{minipage}{0.97\textwidth}
\footnotesize
\emph{Notes.} The bold entry identifies the ex ante selected recourse day. The selected timing is fixed before out-of-sample evaluation.
\end{minipage}
\end{table}

Figure~\ref{fig:two_stage_timing} reports the information--delay--readiness trade-off across candidate recourse days.

\begin{figure}[htbp]
\centering
\includegraphics[width=3in]{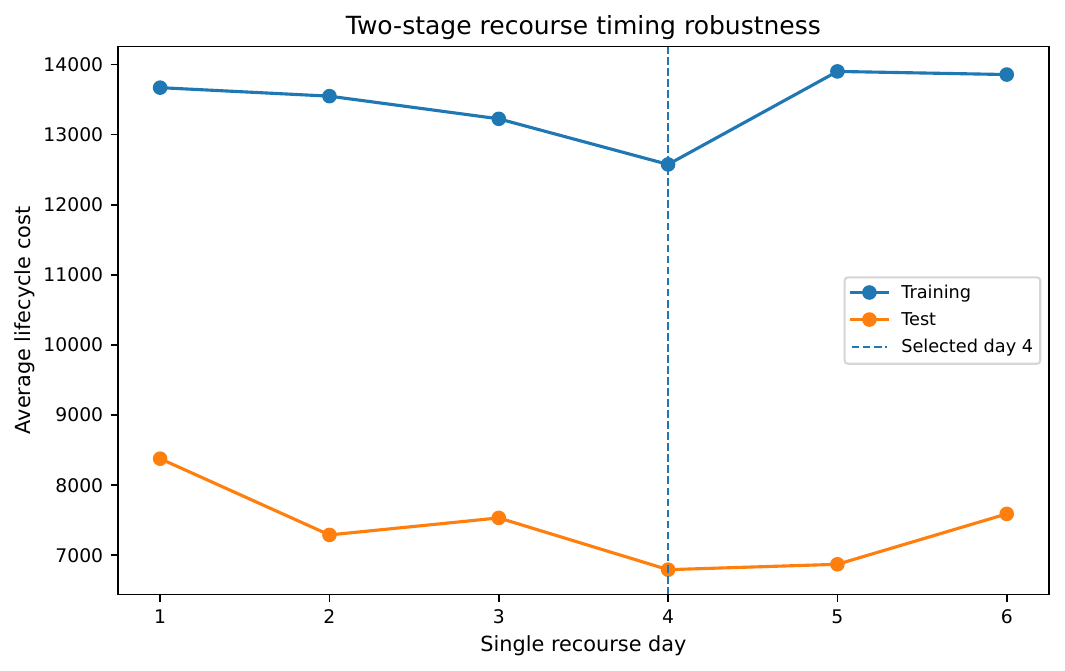}
\caption{Supplementary timing diagnostic for Two-stage recourse in the daily-review application. The diagnostic varies the fixed recourse day and reports the resulting lifecycle cost.}
\label{fig:two_stage_timing}
\end{figure}

Even after this ex ante timing choice, Two-stage recourse remains more costly than CBAP. The policy cannot adjust its response time or channel decision to the belief path observed in a particular lifecycle.

\end{document}